\newcommand{\spec}{{\rm Spec}}
\newcommand{\Zar}{{\rm Zar}}
\newcommand{\Kr}{{\rm Kr}}
\newcommand{\Max}{{\rm Max}}
\newcommand{\qspec}{{\rm QSpec}}
\newcommand{\qmax}{{\rm QMax}}
\newcommand{\ms}{\mathscr}
\newcommand{\SStar}{{\mbox{\rm\texttt{SStar}}}} 
\newcommand{\SStarf}{\mbox{\rm\texttt{SStar}}_{\!\mbox{\tiny\it\texttt{f}}}} 
\newcommand{\SStarb}{\mbox{\rm\texttt{SStar}}_{\!\mbox{\tiny\it\texttt{b}}}} 
\newcommand{\SStarstab}{\overline{\mbox{\rm\texttt{SStar}}}}
\newcommand{\SStarsp}{\mbox{\rm\texttt{SStar}}_{\!\mbox{\tiny\it\texttt{sp}}}}
\newcommand{\SStarspf}{\mbox{\rm\texttt{SStar}}_{\!\mbox{\tiny\it\texttt{f,sp}}}}
\newcommand{\SStarstabft}{\SStarspf}
\newcommand{\SStareabf}{\mbox{\rm\texttt{SStar}}_{\!\mbox{\tiny\it\texttt{f,eab}}}}
\newcommand{\SStareab}{\mbox{\rm\texttt{SStar}}_{\!\mbox{\tiny\it\texttt{eab}}}}
\newcommand{\SStarval}{\mbox{\rm\texttt{SStar}}_{\!\mbox{\tiny\it\texttt{val}}}}
\newcommand{\locsist}{\mbox{\rm\texttt{LS}}}
\newcommand{\locsistf}{\mbox{\rm\texttt{LS}}_{\!\mbox{\tiny\it\texttt{f}}}}
\newcommand{\stf} {\star{_{\!{_f}}}} 
\newcommand{\stt} {\widetilde{\star}} 
\newcommand{\stu} {\overline{\star}} 
\newcommand{\sta} {\star_{\mbox{\it\tiny\texttt{a}}}}
\newcommand{\cont}{\boldsymbol{c}}
\newcommand{\overr}{{\mbox{\rm\texttt{Overr}}}}
\newcommand{\overric}{{\mbox{\rm\texttt{Overr}}}_{\!\mbox{\tiny\it\texttt{ic}}}}
 \newcommand{\FF}{\boldsymbol{\overline{F}}}
    \newcommand{\F}{\boldsymbol{F}}
    \newcommand{\f}{\boldsymbol{f}}
    \newcommand{\insid}{\boldsymbol{\mathcal{I}}}
 \DeclareMathOperator{\chius}{\mbox{\rm\texttt{Cl}}}
  \newcommand{\insZ}{\mathbb Z}
 \newcommand{\insQ}{\mathbb Q} 
 \newcommand{\insA}{\mathbb A}
\newtheoremstyle{mio}%
	{}{} 
	{\itshape}{} 
	{\bfseries}{.}{ } 
	{#1 #2\thmnote{\mdseries~(\scshape #3)}} 
\theoremstyle{mio}
\newtheorem{teor}{Theorem}[section]
\newtheorem{cor}[teor]{Corollary}
\newtheorem{prop}[teor]{Proposition}
\newtheorem{lemma}[teor]{Lemma}
\theoremstyle{definition}
\newtheorem{ex}[teor]{ \textbf{Example}}
\newtheorem{oss}[teor]{Remark}
\DeclareMathOperator{\Cl}{\mbox{\rm\texttt{Cl}}}
\newcommand{\mathttt}[1]{\mbox{\tiny{\texttt{#1}}}}
 \newcommand{\X}{\mathbb{X} } 
\newcommand{\bmap}{\boldsymbol{\beta}}
\begin{document}

\title{Spectral spaces of semistar operations}

\author{Carmelo A. Finocchiaro}
\email{carmelo@mat.uniroma3.it}

\author{Marco Fontana}
\email{fontana@mat.uniroma3.it}

\author{Dario Spirito}
\email{spirito@mat.uniroma3.it}

\address{Dipartimento di Matematica e Fisica, Universit\`a degli Studi
``Roma Tre'', Roma, Italy}

\keywords{Spectral spaces, Riemann-Zariski space of valuation domains, semistar operations, Zariski topology, inverse topology, ultrafilter topology, localizing system}
\subjclass[2010]{13A15, 13G05, 13B10, 13E99, 13C11, 14A05}
\thanks{This work was partially supported by {\sl GNSAGA} of {\sl Istituto Nazionale di Alta Matematica}.}

\maketitle

\begin{abstract}
We investigate, from a topological point of view, the classes of spectral semistar operations and of \texttt{eab} semistar operations, following me\-thods recently introduced  in  \cite{Fi, FiSp}. We show that, in both cases, the subspaces of finite type operations are spectral spaces in the sense of Hochster and, moreover, that   there is a distinguished class of overrings strictly connected to each of the two types of collections of semistar operations.
We also prove that the space of  stable semistar operations is homeomorphic to the space of Gabriel-Popescu localizing systems, endowed with a Zariski-like topology, extending to   the  topological level a result established in \cite{fohu}.
   As a side effect, we obtain that the space of localizing systems of finite type is also a spectral space. Finally, we show that the Zariski topology on the set of semistar operations is the same as the $b$-topology defined recently by B. Olberding  \cite{ol, olb_noeth}. 
\end{abstract}


\section{Introduction}  
 In 1936, W. Krull introduced,   in his first {\sl Beitr\"age} paper \cite{Krull:1936} (see also \cite{Krull}), the concept of a ``special'' closure operation on the nonzero fractional ideals, called star operation. If $D$ is an integral domain with quotient field $K$, in 1994, Okabe and 
Matsuda \cite{OM2} suggested the terminology of semistar operation for a more 
``flexible'' and general notion of a closure operation $ \star $, defined on the set  of nonzero $D$-submodules of $K$, allowing $  D \neq D^\star$. However, it is worth noting that this kind of operation was previously considered by J. Huckaba, in the very general setting of rings with zero divisors \cite[Section 20]{hu} (cf. also \cite[Section 32]{gi}, \cite{an-overrings}, \cite{ac}, \cite{an}, \cite{ep-12}, \cite{ep-15}, \cite{fo-lo-2003}, \cite{hk-98}, \cite{hk-00}, \cite{hk-01}, \cite{hk-11}).

The set of semistar operations on a domain $D$ can be endowed with a topology (called the \emph{Zariski topology}), as in \cite{FiSp}, in such a way that both the prime spectrum of $D$ and the set of overrings of $D$ are naturally  topologically embedded in it.
 This topology was used to study the problem of when the semistar operation defined by a family of overrings is of finite type \cite[Problem 44]{ch-gl}. Subsequently, it was proved that the set of semistar operations of finite type is a spectral space.

The purpose of this paper is to deepen and specialize the study of the Zariski topology on $\SStar(D)$ to the case of the distinguished subspaces $\SStarstab(D)$ (Section \ref{sect:stable} and \ref{sect:spectral}) and $\SStareab(D)$ (Section \ref{sect:eab}) comprising, respectively, the stable and the \texttt{eab} semistar operations.
  We will show that, in both cases, there is a topological retraction to the set of finite type operations and that there is a distinguished class of overrings connected to each of the two types of collections of semistar operations.
  We will also show that both the set of finite type spectral operations and the set of finite type \texttt{eab} operations are spectral spaces, reducing the latter case to the former. 
  However, the proofs given here are not constructive, and provide only vague hints on how such a ring might look like.
  We also prove that the space of  stable semistar operations is homeomorphic to the space of Gabriel-Popescu localizing systems, endowed with a natural topology (described later), extending to  the topological level a result established in \cite{fohu}.
   As a side effect, we obtain that the space of localizing systems of finite type is also a spectral space. Finally, we show that the Zariski topology on the set of semistar operations is the same as the $b$-topology defined by Olberding \cite{ol, olb_noeth}.
   
\section{Preliminaries}

Throughout this paper, let $D$ be an integral domain with quotient field $K$. Let $\FF(D)$ [respectively, $\F(D)$; $\f(D)$] be the set of all nonzero $D$--submodules of $K$ [respectively, nonzero fractional ideals; nonzero finitely 
generated fractional ideals] of $D$ (thus, $\f(D)\subseteq\F(D)\subseteq\FF(D)$).

A mapping $\star:\FF(D)\longrightarrow\FF(D)$, $E\mapsto E^\star$, is called a \emph{semistar operation} of $D$ if, for all $z\in K$, $z\neq 0$ and for all $E,F \in\FF(D)$, the following properties hold: $\mathbf{\bf(\star_1)} \;(zE)^\star =zE^\star$; $\mathbf{\bf (\star_2)} \; E\subseteq F \Rightarrow E^\star \subseteq F^\star$; $\mathbf{ \bf (\star_3)}\; E \subseteq E^\star$; and $\mathbf{ \bf (\star_4)}\;  E^{\star \star} := (E^\star)^\star = E^\star $.

When $D^\star=D$, the restriction of $\star$ to $\boldsymbol{F}(D)$ is called a
\emph{star operation} (see \cite[Section 32]{gi} for more details).

As in the classical star-operation setting, we associate to a 
semistar operation $\star$ of $D$ a new semistar operation
$\stf$ of $D$ defining, for every $E\in\FF(D)$, 
\begin{equation*}
E^{\stf} := \bigcup \{F^\star \mid  F \subseteq E,  F \in \f(D)\}.
\end{equation*}
We call $\stf$ the semistar operation of finite type of $D$ \emph{associated }to $\star$. If $\star=\stf$, we say that $\star$ is a 
semistar operation \emph{of finite type}  on $D$. Note that $(\stf)_{_{\! f}} = \stf$, so $ \stf$ is a semistar operation
of finite type   on  $D$.

\smallskip

We denote by $\SStar(D)$ [respectively, $\SStarf(D)$] the set of all semistar ope\-ra\-tions [respectively, semistar operations of finite type] on $D$.
Given two semistar operations $\star'$ and $\star''$ of $D$, we say that $\star' {\boldsymbol{\preceq} }\ \star''$ if $E^{\star'} \subseteq E^{\star''}$, for all $E \in\FF(D)$. The relation ``${\boldsymbol{\preceq}}$'' introduces a partial ordering in $\SStar(D)$. From the definition of $\stf$, we deduce that $\stf {\boldsymbol{\preceq}}\star$ and that $\stf$ is the largest semistar operation of finite type smaller or equal to $\star$.

Let $\mathbf{\mathscr{S}}$ be a nonempty set of semistar operations on $D$. For each $E \in \FF(D)$, define $\wedge_{\mathbf{\mathscr{S}}}$ as follows:
\begin{equation*}
E^{\wedge_{\mathbf{\mathscr{S}}} } := \bigcap \{ E^\star \mid \star \in \mathbf{\mathscr{S}}\}.
\end{equation*}
It is easy to see that $\wedge_{\mathbf{\mathscr{S}}}$ is a semistar operation on $D$ and it is the infimum of ${\mathbf{\mathscr{S}}}$ in the partially ordered set $(\SStar(D), {\boldsymbol{\preceq}})$. The semistar operation $\vee_{\mathbf{\mathscr{S}}}:= \bigwedge \{ \sigma \in \SStar(D) \mid \star \boldsymbol{\preceq} \sigma \mbox{ for all } \star \in \mathbf{\mathscr{S}} \}$  is the supremum of ${\mathbf{\mathscr{S}}}$ in $(\SStar(D), {\boldsymbol{\preceq}})$.
 
A nonzero ideal $I$ of $D$ is called a \emph{quasi-$\star$-ideal} if $I = I^\star \cap D$. A \emph{quasi-$\star$-prime} is a quasi-$\star$-ideal which is also a prime ideal. The set of all quasi-$\star$-prime ideals of $D$ is denoted by $ \qspec^\star(D)$. The set of maximal elements in the set of proper quasi-$\star$-ideals of $D$ (ordered by set-theoretic inclusion) is denoted by $\qmax^\star(D)$ and it is a subset of $\qspec^\star(D)$. It is well known that if $\star$ is a semistar operation of finite type then $\qmax^\star(D)$ is nonempty \cite[Lemma 2.3(1)]{fo-lo-2003}. A semistar operation $\star $ is called \emph{quasi-spectral} (or \emph{semifinite}) if each quasi-$\star$-ideal is contained in a quasi-$\star$-prime.

In \cite{FiSp}, the set $\SStar(D)$ of all semistar operation was endowed with a topology (called the \emph{Zariski topology}) having, as a subbasis of open sets, the sets of the type $\texttt{V}_E:=\{\star\in\SStar(D)\mid 1\in E^\star\}$, where $E$ is a nonzero $D$-submodule of $K$. This topology makes $\SStar(D)$ into a quasi-compact T$_0$ space.

For each overring $T$ of $D$, we can define a semistar operation of finite type $\wedge_{\{T\}}: \FF(D) \rightarrow \FF(D)$ by setting $E^{\wedge_{\{T\}}} : = ET$, for each $E\in\FF(D)$. If we have a whole family $\boldsymbol{\mathcal{T}}$ of overrings of $D$, we can consider the semistar operation  $\wedge_{\boldsymbol{\mathcal{T}}} := \bigwedge\{ \wedge_{\{T\}} \mid T \in \boldsymbol{\mathcal{T}}\}$.

For future reference, we state an embedding property that will be used later several times.

\begin{prop}\label{immersione}
Let $\overr(D)$ be the set of all overrings of $D$, endowed with the topology whose   basic open sets are of the form $\overr(D[x_1, x_2,\ldots,x_n])$,  for $x_1, x_2, \ldots,x_n$  varying in $ K$.
\begin{enumerate}[\rm(1)]
\item The map $\iota:\overr(D)\rightarrow\SStarf(D)$, defined by $\iota(T) := \wedge_{\{T\}}$, for each $T \in \overr(D)$, is a topological embedding  \cite[Proposition 2.5]{FiSp}.
\item The map $\pi: \SStarf(D)\rightarrow\overr(D)$, defined by $\pi(\star):=D^\star$,  for any $\star\in \SStarf(D)$, is a continuous surjection.
\item $\pi\circ\iota$ is the identity map of $\overr(D)$, that is, $\pi$ is a topological retraction. 
\end{enumerate}
\end{prop}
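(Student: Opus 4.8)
The plan is to treat the three statements in the order (1), (3), (2), since the surjectivity asserted in (2) is an immediate consequence of (3). For (1) I would simply invoke \cite[Proposition 2.5]{FiSp}, where $\iota$ is shown to be a topological embedding, so no further argument is required. For (3) the computation is direct: given an overring $T$ of $D$, the operation $\iota(T)=\wedge_{\{T\}}$ sends each $E\in\FF(D)$ to $ET$, and in particular $D^{\wedge_{\{T\}}}=DT=T$, because $T$ is a ring containing $D$. Hence $\pi(\iota(T))=T$ for every $T\in\overr(D)$, that is, $\pi\circ\iota=\mathrm{id}_{\overr(D)}$. This identity shows at once that $\pi$ has a right inverse, so it is surjective and, once continuity is established, a topological retraction.

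It remains to prove that $\pi$ is continuous, which is the core of the argument. First I would observe that membership in $D^\star$ is detected by the Zariski subbasis: for $x\in K\setminus\{0\}$ and $\star\in\SStarf(D)$, axiom $(\star_1)$ gives $(x^{-1}D)^\star=x^{-1}D^\star$, whence $x\in D^\star$ if and only if $1\in(x^{-1}D)^\star$, i.e.\ if and only if $\star\in\texttt{V}_{x^{-1}D}$. Next I would unwind the basic open sets of $\overr(D)$: a ring $T\in\overr(D)$ lies in $\overr(D[x_1,\ldots,x_n])$ exactly when $D[x_1,\ldots,x_n]\subseteq T$, equivalently when $x_1,\ldots,x_n\in T$. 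Combining the two observations,
\[
\pi^{-1}(\overr(D[x_1,\ldots,x_n]))=\{\star\in\SStarf(D)\mid x_1,\ldots,x_n\in D^\star\}=\bigcap_{i\,:\,x_i\neq 0}\texttt{V}_{x_i^{-1}D},
\]
a finite intersection of subbasic open sets (indices with $x_i=0$ impose no condition, since $0\in D^\star$ always). Therefore the preimage of every basic open set is open and $\pi$ is continuous, completing (2).

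The only genuinely delicate point is the translation of the defining condition of the overring topology into the language of the Zariski subbasis $\{\texttt{V}_E\}$; everything else is formal. The identity $(x^{-1}D)^\star=x^{-1}D^\star$ coming from $(\star_1)$ is exactly what turns the membership relation $x\in D^\star$ into an open condition, and I would expect this to be the one place where the semistar axioms are actually used.
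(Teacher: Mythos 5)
Your proposal is correct and follows essentially the same route as the paper: part (1) is cited from \cite[Proposition 2.5]{FiSp}, and continuity of $\pi$ rests on the identity $\pi^{-1}(\overr(D[x]))=\SStarf(D)\cap \texttt{V}_{x^{-1}D}$, which is exactly the key equality the paper records (you merely spell out the finite-intersection and $x_i=0$ details and make the computation $D^{\wedge_{\{T\}}}=T$ explicit). Nothing further is needed.
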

\begin{proof}
Part (1) is \cite[Proposition 2.5]{FiSp}, while  parts (2) and (3) follow from the fact that 
\begin{equation*}
\pi^{-1}(\overr(D[x]))=\SStarf(D)\cap \texttt{V}_{x^{-1}D}.\qedhere
\end{equation*}
\end{proof}

\begin{oss} Note that, by \cite[Proposition 2.4(2)]{FiSp}, a statement completely analogous   to Proposition \ref{immersione} holds when  $\SStar(D)$ replaces everywhere $\SStarf(D)$.
\end{oss}

Recall that two different points $x, y$ of a topological space are \emph{topologically distinguishable} if there is an open set which contains one of these points and not the other. Obviously, the previous property holds for any pair of distinct points if and only if the space is $T_0$. On the other hand,  ``topological indistinguishability'' of points is an equivalence relation. No matter what topological space $X$ might be to begin with, the quotient space under this equivalence relation is always $T_0$. This quotient space is called {\it the Kolmogoroff quotient space of} $X$.
\smallskip

Let $X$ be a spectral space (i.e., a topological space that is homeomorphic to the prime spectrum of a ring, endowed with the Zariski topology).   It is possibile to consider on  $X$ another topology (see \cite[Proposition 8]{ho}) defined by taking the collection of all the open and quasi-compact subspaces of $X$ as a basis of closed sets. This topology is called \textit{the inverse topology on $X$}. Note that, by definition, the closure of a subset $Y$ of $X$, with respect to the inverse topology, is given by
	$$
	\bigcap\{U\mid U\subseteq X\mbox{ open and quasi-compact, }Y\subseteq U \}\,.
	$$


\section{Stable semistar operations and localizing systems}\label{sect:stable}

A semistar operation $\star$ defined on an integral domain $D$ is called \emph{stable} provided that, for any $E,H\in \FF(D)$, we have $(E\cap H)^\star=E^\star\cap H^\star$. We denote by $\SStarstab(D)$ the set of stable semistar operations on $D$.

\begin{prop}\label{immersione:stab}
Let $D$ be an integral domain and let $\iota:\overr(D)\longrightarrow\SStarf(D)$ be the topological embedding defined in Proposition \ref{immersione}. If $T\in\overr(D)$, then $\iota(T)\in\SStarstab(D)$ if and only if $T$ is flat over $D$.
\end{prop}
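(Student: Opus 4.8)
The plan is to unwind stability of $\iota(T)=\wedge_{\{T\}}$ into the identity $(E\cap H)T=ET\cap HT$ for all $E,H\in\FF(D)$, and then to recognize this as saying that $T\otimes_D-$ preserves finite intersections of submodules of $K$. For the implication ``$T$ flat $\Rightarrow$ $\iota(T)$ stable'' I would tensor the short exact sequence of $D$-modules
\[
0\longrightarrow E\cap H\xrightarrow{\,w\mapsto(w,w)\,}E\oplus H\xrightarrow{\,(u,v)\mapsto u-v\,}E+H\longrightarrow0
\]
with the flat module $T$. Since $T$ is an overring we have $K\otimes_DT=K$, so for every nonzero $D$-submodule $M\subseteq K$ the canonical map $M\otimes_DT\to K$ is injective with image $MT$; under these identifications the tensored sequence becomes $0\to(E\cap H)T\to ET\oplus HT\to(E+H)T\to0$ with the same maps, and exactness in the middle gives exactly $ET\cap HT=(E\cap H)T$. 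This direction is routine once one invokes the standard fact that flat base change preserves finite intersections of submodules.

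For the converse I would extract flatness from one well-chosen instance of the identity. Given $0\neq x\in T$, apply stability to $E=D$ and $H=x^{-1}D$ (so that $E\cap H=(D:_Dx)=\{d\in D\mid xd\in D\}\neq0$): then $ET=T$, $HT=x^{-1}T$, and since $x\in T$ we have $1=x^{-1}x\in T\cap x^{-1}T=ET\cap HT$, so stability forces $1\in(D:_Dx)T$, that is $(D:_Dx)T=T$ for every $x\in T$. To conclude, fix a maximal ideal $N$ of $T$ and put $P:=N\cap D$. For $x\in T$ the condition $(D:_Dx)T=T$ means $(D:_Dx)\not\subseteq N$, so there is $s\in(D:_Dx)\setminus N$; then $s\in D\setminus P$ and $sx\in D$, whence $x=(sx)/s\in D_P$. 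Thus $T\subseteq D_P$, and since $D\subseteq T$ localizing at $S:=D\setminus P$ yields $S^{-1}T=D_P$ and hence $T_N=D_P=D_{N\cap D}$. Since flatness of $T$ over $D$ can be verified locally at the maximal ideals $N$ of $T$ (relative to $D_{N\cap D}$), and $D_{N\cap D}$ is trivially flat over itself, it follows that $T$ is flat over $D$.

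The step I expect to be the main obstacle is the final passage of the converse, namely turning the denominator-ideal condition $(D:_Dx)T=T$ $(\forall x\in T)$ into the equality $T_N=D_{N\cap D}$ at every maximal ideal $N$ of $T$; this is essentially Richman's criterion for flat overrings, and in a clean write-up I would isolate the implication $\bigl[(D:_Dx)T=T\ \text{for all }x\in T\bigr]\Rightarrow T\ \text{flat}$ as a separate lemma (or cite it) rather than inline the local computation.
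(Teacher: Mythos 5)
Your proof is correct, but it takes a genuinely different route from the paper's. The paper disposes of the statement in two lines: it observes that stability of $\wedge_{\{T\}}$ amounts to the identity $(I\cap J)T=IT\cap JT$ for ideals $I,J$ of $D$, and then cites \cite[Proposition 1.7]{uda} together with \cite[Theorem 7.4(i)]{matsu-86} for the equivalence of that identity with flatness of $T$. You instead prove both implications from scratch: the forward direction by tensoring $0\to E\cap H\to E\oplus H\to E+H\to 0$ with the flat module $T$ (this is in substance the proof of the cited theorem of Matsumura, specialized to submodules of $K$ via the identification $M\otimes_D T\cong MT$), and the converse by extracting from the single family of instances $E=D$, $H=x^{-1}D$ the condition $(D:_Dx)T=T$ for all $x\in T$, deriving from it that $T_N=D_{N\cap D}$ at every maximal ideal $N$ of $T$ (Richman's criterion), and invoking the fact that flatness can be checked locally. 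What your argument buys is self-containedness and a slightly sharper statement --- stability need only be tested on the pairs of cyclic submodules $(D,x^{-1}D)$ to force flatness --- while the paper's version buys brevity by outsourcing both directions to the literature. One step you leave compressed: the passage from $S^{-1}T=D_P$ to $T_N=D_P$ requires the (routine) observation that every $u\in T\setminus N$ becomes a unit in $D_P$; indeed, otherwise $u\in PD_P$, so $us\in P\subseteq N$ for some $s\in D\setminus P\subseteq T\setminus N$, contradicting the primeness of $N$. Since you flag exactly this point and propose to isolate it as a lemma or cite it, the write-up is sound.
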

\begin{proof}
It is enough to note that the equality $(I\cap J)T=IT\cap JT$ holds for every ideal $I,J$ of $D$ if and only if $T$ is flat (\cite[Proposition 1.7]{uda} and \cite[Theorem 7.4(i)]{matsu-86}).
\end{proof}

Given a semistar operation $\star$ on $D$, we can always associate to $\star$ a stable semistar operation $\stu$ by defining, for every $E\in\FF(D)$,
\begin{equation*}
E^{\stu}:=\bigcup \{(E:I)\mid I \mbox{ nonzero ideal of } D \mbox{ such that } I^\star =D^\star\}.
\end{equation*}
It is easy to see that $\stu \boldsymbol{\preceq} \star$ and, moreover, that $\stu$ is the largest stable semistar operation that precedes $\star$. Therefore, $\star$ is stable if and only if $\star = \stu$ \ \cite[Proposition 3.7, Corollary 3.9]{fohu}.

\begin{prop}\label{stable:subret}
Let $D$ be an integral domain, and denote by  $\insid$ the set of nonzero ideals of $D$. Let also $\overline{\Phi}:\SStar(D)\rightarrow\SStarstab(D)$ be the map defined by  $\overline{\Phi}(\star):=\stu$, for each $\star \in \SStar(D)$.
\begin{enumerate}[(1)]
\item\label{subbasis:stable} The set $\{\mbox{\rm\texttt{V}}_I\cap\SStarstab(D)\mid I\in\insid\}$ is a subbasis for $\SStarstab(D)$.
\item\label{retraction:stable} If $\SStar(D)$ is endowed with the Zariski topology, $\overline{\Phi}$ is a topological retraction.
\item\label{weak topology:stable} If $\SStar(D)$ is endowed with the topology generated by the family $\{\mbox{\rm\texttt{V}}_I\mid I\in\insid\}$, then $\overline{\Phi}$ is the canonical  map onto the Kolmogoroff quotient space of $\SStar(D)$.
\end{enumerate}
\end{prop}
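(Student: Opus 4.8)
The plan is to reduce all three assertions to a single computation about the stabilization $\stu$, namely that for every $\star\in\SStar(D)$ and every nonzero ideal $I$ of $D$ one has $1\in I^{\stu}$ if and only if $1\in I^\star$. First I would record two preliminary remarks. Since $I\subseteq D$ and $I^\star$ is a $D$-submodule of $K$, monotonicity and idempotency of $\star$ show that $I^\star=D^\star$ is equivalent to $1\in I^\star$; hence $\texttt{V}_I\cap\SStarstab(D)$ records exactly whether $I$ lies in the localizing system $\mathscr{F}^\star:=\{I\in\insid\mid 1\in I^\star\}$ attached to $\star$. Next, unravelling the definition of $\stu$, the condition $1\in I^{\stu}$ means that some nonzero ideal $J$ with $J^\star=D^\star$ satisfies $1\in(I:J)$, i.e.\ $J\subseteq I$; taking $J=I$ gives one implication, while $J\subseteq I$ together with monotonicity gives the other, which proves the claimed equivalence. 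Note this holds for arbitrary $\star$, not only for stable ones.

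For part (1), the subspace topology that $\SStarstab(D)$ inherits from the Zariski topology has $\{\texttt{V}_E\cap\SStarstab(D)\mid E\in\FF(D)\}$ as a subbasis, so it is enough to rewrite each such set using ideals. Using stability, $1\in E^\star$ holds precisely when some nonzero ideal $J\subseteq E$ satisfies $J^\star=D^\star$; since any such $J$ automatically lies in the nonzero ideal $E\cap D$, I obtain $\texttt{V}_E\cap\SStarstab(D)=\bigcup\{\texttt{V}_J\cap\SStarstab(D)\mid J\in\insid,\ J\subseteq E\}$, a union of members of the proposed family. Conversely each $\texttt{V}_I\cap\SStarstab(D)$ is itself of the form $\texttt{V}_E\cap\SStarstab(D)$, so the two families generate the same topology.

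For part (2), the map $\overline{\Phi}$ takes values in $\SStarstab(D)$ and restricts to the identity on it, because $\star$ is stable exactly when $\star=\stu$; thus $\overline{\Phi}$ is a set-theoretic retraction. Continuity is precisely the key computation: $\overline{\Phi}^{-1}(\texttt{V}_I\cap\SStarstab(D))=\{\star\mid 1\in I^{\stu}\}=\{\star\mid 1\in I^\star\}=\texttt{V}_I$, which is Zariski-open, and by part (1) the sets $\texttt{V}_I\cap\SStarstab(D)$ form a subbasis of the target. Hence $\overline{\Phi}$ is a continuous retraction, that is, a topological retraction.

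For part (3), write $\tau$ for the coarser topology on $\SStar(D)$ generated by $\{\texttt{V}_I\mid I\in\insid\}$, and observe that $\tau$ induces on $\SStarstab(D)$ the same subbasis $\{\texttt{V}_I\cap\SStarstab(D)\}$ as in part (1), so the target space is unchanged. Two operations are topologically indistinguishable in $\tau$ iff they lie in the same sets $\texttt{V}_I$, i.e.\ iff $\mathscr{F}^\star=\mathscr{F}^{\star'}$; and since $\stu$ is recovered from $\mathscr{F}^\star$ via its defining formula (with the converse supplied by the key computation), this happens iff $\stu=\stu'$, i.e.\ iff $\overline{\Phi}(\star)=\overline{\Phi}(\star')$, so the fibres of $\overline{\Phi}$ are exactly the indistinguishability classes. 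The computation of part (2) shows $\overline{\Phi}$ is $\tau$-continuous, and any continuous retraction onto a subspace $A$ is a quotient map, since $r^{-1}(U)$ open forces $U=A\cap r^{-1}(U)$ open in $A$; a surjective quotient map whose fibres are the indistinguishability classes factors through a homeomorphism onto the Kolmogoroff quotient, which identifies $\overline{\Phi}$ with the canonical quotient map and shows along the way that $\SStarstab(D)$ is $T_0$. The essential point, and the only place where the structure of stable operations genuinely enters, is the equivalence $1\in I^{\stu}\Leftrightarrow 1\in I^\star$ and the consequent identification of the fibres of $\overline{\Phi}$ with localizing systems; once this is in hand the rest is formal, the one technical care being to check that the Zariski-subspace, the $\tau$-subspace, and the quotient topologies all coincide on $\SStarstab(D)$, which is exactly what part (1) secures.
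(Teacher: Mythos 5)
Your proposal is correct and follows essentially the same route as the paper: the central computation $\overline{\Phi}^{-1}(\mbox{\rm\texttt{V}}_I\cap\SStarstab(D))=\mbox{\rm\texttt{V}}_I$ (equivalently, $1\in I^{\stu}\Leftrightarrow 1\in I^\star$ for ideals $I$) is exactly the paper's key step, and your identification of the fibres of $\overline{\Phi}$ with the indistinguishability classes via the associated localizing system $\mathscr{F}^\star$ matches the paper's argument for part (3). The only cosmetic differences are that in part (1) the paper uses stability directly to get the single identity $\mbox{\rm\texttt{V}}_E\cap\SStarstab(D)=\mbox{\rm\texttt{V}}_{E\cap D}\cap\SStarstab(D)$ where you write a union over ideals $J\subseteq E$, and that you spell out the quotient-map formalities in part (3) which the paper leaves implicit.
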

\begin{proof}
\ref{subbasis:stable} For any nonzero $D$-submodule $E$ of $K$, and any stable semistar operation $\star$, we have $1\in E^\star$ if and only if $1\in E^\star\cap D^\star=(E\cap D)^\star$. Therefore, $\star \in \mbox{\rm\texttt{V}}_E$ if and only if $\star\in \mbox{\rm\texttt{V}}_{E\cap D}$. The claim follows.

\ref{retraction:stable} We claim that, if $I$ is an ideal of $D$, $\overline{\Phi}^{-1}(\mbox{\rm\texttt{V}}_I\cap\SStarstab(D))=\mbox{\rm\texttt{V}}_I$. Indeed, if $\star\in \mbox{\rm\texttt{V}}_I$ then $1\in I^\star$, so $1\in (I:I)\subseteq I^{\stu}$ and $\overline{\Phi}(\star)\in \mbox{\rm\texttt{V}}_I\cap\SStarstab(D)$. Conversely, if $\star\in\overline{\Phi}^{-1}(\mbox{\rm\texttt{V}}_I\cap\SStarstab(D))$, then $\overline{\Phi}(\star)\in \mbox{\rm\texttt{V}}_I$, and thus $1\in(I:E)$ for some $E\in\insid$ such that $E^\star=D^\star$. But this means that $E\subseteq I$, and so $1\in I^\star$, i.e., $\star\in \mbox{\rm\texttt{V}}_I$. Hence, $\overline{\Phi}$ is continuous.

Since $\star$ is stable if and only if $\star=\stu$ \cite[Proposition 3.7, Corollary 3.9]{fohu}, it follows that $\overline{\Phi}$ is a topological retraction.

\ref{weak topology:stable} Since, by the previous point, $\overline{\Phi}^{-1}(\mbox{\rm\texttt{V}}_I\cap\SStarstab(D))=
\mbox{\rm\texttt{V}}_I$,   the map $\overline{\Phi}$ is continuous even when $\SStar(D)$ is endowed with the weaker topology. To show that $\overline{\Phi}$  is the canonical  map onto the Kolmogoroff quotient space of $\SStar(D)$, it is enough to show that $\overline{\star_1}=\overline{\star_2}$ if and only if $\star_1\in \mbox{\rm\texttt{V}}_I$ is equivalent to $\star_2\in \mbox{\rm\texttt{V}}_I$.

Suppose $\overline{\star_1}=\overline{\star_2}$, and let $I$ be an ideal of $D$ such that $\star_1\in \mbox{\rm\texttt{V}}_I$, that is, $1\in I^{\star_1}$. By definition, $1\in(I:I)\subseteq I^{\overline{\star_1}}=I^{\overline{\star_2}}\subseteq I^{\star_2}$. It follows that $\star_2\in \mbox{\rm\texttt{V}}_I$. By symmetry, we deduce that if $\star_2\in \mbox{\rm\texttt{V}}_I$ then $\star_1\in \mbox{\rm\texttt{V}}_I$.

Conversely, suppose that $\star_1\in \mbox{\rm\texttt{V}}_I$ if and only if $\star_2\in \mbox{\rm\texttt{V}}_I$. Then, $I^{\star_1}=D^{\star_1}$ if and only if $I^{\star_2}=D^{\star_2}$; a direct application of the definition of the stable semistar operations canonically associated shows that $\overline{\star_1}=\overline{\star_2}$ (cf. \cite[page 182]{fohu}).
\end{proof}

\begin{oss} \label{eq:defstare}
 Recall that for any \emph{star} operation $\ast$ on an integral domain $D$ with quotient field $K$ and for any $E\in\FF(D)$, we can consider the map $\ast_e$ defined by
\begin{equation*}
E^{\ast_e}:=\begin{cases}
E^\ast & \text{if~}E\in \F(D),\\
K & \text{if~}E\in \FF(D)\setminus \F(D).
\end{cases}
\end{equation*}
The map $E\mapsto E^{\ast_e}$ defines a semistar operation on
$D$ such that $D^{\ast_e}=D$, called {\it the trivial semistar extension of} $\ast$. Note that, even if $\ast$ is a stable star operation, $\ast_e$ is not always stable: for example, let $D$ be a Dedekind domain with exactly two maximal ideals, $P$ and $Q$, and let $\ast$ be the identity star operation. Then, $D_P$ and $D_Q$ are not fractional ideals of $D$ \cite[Example 5.7]{FiSp}, and thus $D_P^{\ast_e}=K=D_Q^{\ast_e}$. On the other hand, $(D_P\cap D_Q)^{\ast_e}=D^{\ast_e}=D$.
\end{oss}

\begin{prop}\label{stable:stare}
Let $\ast$ be a stable \emph{star} operation on an integral domain $D$. There is exactly one stable semistar operation $\star$ on $D$ such that $\star|_{\F(D)}=\ast$.
\end{prop}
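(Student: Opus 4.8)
The plan is to build the extension by hand and then extract uniqueness from the stable-associate machinery recalled above. For existence, start from the trivial semistar extension $\ast_e$ of $\ast$ introduced in Remark \ref{eq:defstare}, which is a semistar operation on $D$ with $D^{\ast_e}=D$, and pass to its stable associate $\overline{\ast_e}$. By \cite[Proposition 3.7, Corollary 3.9]{fohu} the operation $\overline{\ast_e}$ is stable, so it is the natural candidate for the sought $\star$; everything then reduces to checking that $\overline{\ast_e}|_{\F(D)}=\ast$.

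To verify the restriction, I would use the explicit description of the stable associate. For every $E\in\FF(D)$,
\begin{equation*}
E^{\overline{\ast_e}}=\bigcup\{(E:I)\mid I\text{ a nonzero ideal of }D,\ I^{\ast_e}=D^{\ast_e}=D\}.
\end{equation*}
Since every nonzero ideal $I$ belongs to $\F(D)$, we have $I^{\ast_e}=I^\ast$, so the index set is exactly $\{I\mid I^\ast=D\}$, a set depending only on $\ast$. Restricting to $E\in\F(D)$, one inclusion is immediate: if $x\in(E:I)$ with $I^\ast=D$, then $(xI)^\ast=xI^\ast=xD$ lies in $E^\ast$, so $x\in E^\ast$. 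The reverse inclusion $E^\ast\subseteq\bigcup\{(E:I)\mid I^\ast=D\}$ is where stability of $\ast$ is used: given $x\in E^\ast$, put $I:=(x^{-1}E)\cap D$, an ideal of $D$ with $xI\subseteq E$; stability yields $I^\ast=(x^{-1}E)^\ast\cap D^\ast=(x^{-1}E)^\ast\cap D$, and since $x\in E^\ast$ gives $1\in(x^{-1}E)^\ast$, we get $1\in I^\ast$ and hence $I^\ast=D$, so $x\in(E:I)$. This shows $E^{\overline{\ast_e}}=E^\ast$ for all $E\in\F(D)$; in particular $D^{\overline{\ast_e}}=D$, so $\star:=\overline{\ast_e}$ is a stable semistar operation restricting to $\ast$.

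For uniqueness, let $\star$ be any stable semistar operation with $\star|_{\F(D)}=\ast$. As $D\in\F(D)$, we have $D^\star=D^\ast=D$. Because $\star$ is stable, \cite[Corollary 3.9]{fohu} gives $\star=\overline{\star}$, that is
\begin{equation*}
E^\star=\bigcup\{(E:I)\mid I\text{ a nonzero ideal of }D,\ I^\star=D^\star=D\}\qquad\text{for all }E\in\FF(D).
\end{equation*}
Each ideal $I$ is a fractional ideal, so $I^\star=I^\ast$, and the defining condition $I^\star=D$ is therefore just $I^\ast=D$. Thus the right-hand side depends only on $\ast$ and not on the chosen extension, so any two stable semistar operations restricting to $\ast$ agree on all of $\FF(D)$. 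This proves uniqueness and at the same time identifies $\star$ with the operation $E\mapsto\bigcup\{(E:I)\mid I^\ast=D\}$ produced in the existence step.

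The main obstacle is the middle step—showing that the stable-associate formula, once restricted to $\F(D)$, reproduces $\ast$ itself rather than something strictly smaller. This is precisely the place where the hypothesis that $\ast$ is \emph{stable} is indispensable: stability is exactly what allows the ideal $I=(x^{-1}E)\cap D$ to be split as $(x^{-1}E)\cap D$ under $\ast$ and thereby forces $I^\ast=D$. Without it, the construction would only return the (possibly strictly smaller) stable star operation associated to $\ast$, and no semistar extension restricting to $\ast$ on the nose need exist.
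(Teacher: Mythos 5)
Your proof is correct, and it departs from the paper's argument in both halves, most substantially in the uniqueness step. For existence you pick the same candidate, $\star:=\overline{\ast_e}$, but where the paper identifies $\overline{\ast_e}|_{\F(D)}$ with the Anderson--Cook stable closure of $\ast$ (citing their Definition 2.2) and invokes the fact that a stable star operation equals its own stable closure, you verify $E^{\overline{\ast_e}}=E^\ast$ for $E\in\F(D)$ by hand; your key device $I:=(x^{-1}E)\cap D$, with stability giving $I^\ast=(x^{-1}E)^\ast\cap D$ and hence $I^\ast=D$, is exactly the computation the external reference encapsulates, so your version is self-contained. For uniqueness the routes genuinely differ: the paper argues topologically, using that $\SStarstab(D)$ is $T_0$ together with Proposition \ref{stable:subret}(1) (the sets $\texttt{V}_I\cap\SStarstab(D)$, $I$ an ideal, form a subbasis), so two distinct stable extensions would be separated by some $\texttt{V}_I$, contradicting $I^{\star_1}=I^\ast=I^{\star_2}$; you instead argue algebraically, using that a stable $\star$ satisfies $\star=\overline{\star}$ and that the defining formula for $\overline{\star}$ involves only the values of $\star$ on integral ideals, which $\ast$ determines. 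Both are sound and both ultimately rest on the principle that stable operations are determined by their action on ideals; your version buys independence from the topological apparatus (and from the Anderson--Cook reference) and makes visible exactly where stability of $\ast$ enters, while the paper's version is shorter and reuses machinery it has already built. Two cosmetic points you should patch: in the reverse inclusion one should dispose of $x=0$ trivially, and one should note that $I=(x^{-1}E)\cap D$ is \emph{nonzero} (take a nonzero $y\in x^{-1}E$ and a nonzero $d\in D$ with $dy\in D$), since the union defining $\overline{\ast_e}$ ranges over nonzero ideals and $\ast$ is only defined on nonzero fractional ideals.
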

\begin{proof}
Suppose there exist two stable semistar extensions $\star_1$ and $\star_2$ of the star operation $\ast$, i.e., $ \star_1|_{\F(D)}= \star_2|_{\F(D)} =\ast$. Since $\SStarstab(D)$ is $T_0$, there is a subbasic open set $\mbox{\rm\texttt{U}}_I := \mbox{\rm\texttt{V}}_I \cap \SStarstab(D) $, with $I$ a proper ideal of $D$, such that $\star_1\in \mbox{\rm\texttt{U}}_I$ but $\star_2\notin \mbox{\rm\texttt{U}}_I$ (or conversely). But this would imply $I^\ast=I^{\star_1}\neq I^{\star_2}=I^\ast$, which is absurd.

For the existence, consider the semistar operation $\star:=\overline{\ast_e}$, where $\ast_e$ is the trivial semistar extension of $\ast$ defined in Remark \ref{eq:defstare}; by definition, $\star$ is a stable semistar operation. On the other hand, since $D^\star=D$, if $I$ is a nonzero $D$-submodule of $K$ such that $I^\star=D^\star$, then $I$ is an ideal in $D$. It follows that $\star|_{\F(D)}$ is the stable closure of $\ast$ as a \emph{star} operation, as defined in \cite[Definition 2.2]{ac}. However, since $\ast$ is already stable, we have $\star|_{\F(D)}=\ast$, i.e., $\star$ is an extension of $\ast$.
\end{proof}

Our next goal is to estabilish a topological connection between stable operations and localizing systems.

A \emph{localizing system} on $D$ is a subset $\mathcal{F}$ of ideals of $D$ such that:
\begin{itemize}
\item if $I\in\mathcal{F}$ and $J$ is an ideal of $D$ such that $I\subseteq J$, then $J\in\mathcal{F}$;
\item if $I\in\mathcal{F}$ and $J$ is an ideal of $D$ such that, for each $i\in I$, $(J:_D iD)\in\mathcal{F}$, then $J\in\mathcal{F}$.
\end{itemize}

A localizing system $\mathcal{F}$ is \emph{of finite type} if for each $I\in \mathcal{F}$ there exists a nonzero finitely generated ideal $J\in \mathcal{F}$ with $J\subseteq I$. For instance, if $T$ is an overring of $R$, $\mathcal{F}(T):=\{I \mid I$ ideal of $D,IT=T\}$ is a localizing system of finite type. On the other hand, if $V$ is a valuation domain and $P$ is a nonzero idempotent prime ideal of $V$, then $\hat{\mathcal{F}}(P): =\{I \mid I\mbox{ ideal of } V\mbox{ and }I\supseteq P\}$ is a localizing system of $V$ which is not of finite type. Given a localizing system $\mathcal{F}$ of an integral domain $D$, then
\begin{equation*}
{\mathcal{F}}_{\!{_f}}:=\{ I\in \mathcal{F} \mid I\supseteq J, \mbox{ for some nonzero finitely generated ideal } J\in \mathcal{F}\}
\end{equation*}
is a localizing system of finite type of $D$, and ${\mathcal{F}} ={\mathcal{F}}_{\!{_f}}$ if and only if $\mathcal{F}$ is a localizing system of finite type.

We denote by $\locsist(D)$ [respectively, $\locsistf(D)$] the set of all localizing systems [respectively, localizing systems of finite type] on $D$. For further details on localizing systems, see \cite[Chap. II, \S 2, Exercices 17--25]{BAC} or \cite[Sections 2 and 3]{fohu}. 

It is well known that, to each localizing system $\mathcal{F}$, we can associate a semistar operation $\star_{\mathcal{F}}$ defined as follows, for each $E\in\FF(D)$,
\begin{equation*}
E^{\star_{\mathcal{F}}}:=\bigcup\{(E:H)\mid H \in\mathcal{F}\}.
\end{equation*}
The assignment $\mathcal{F}\mapsto\star_{\mathcal{F}}$ defines a map $\boldsymbol{\lambda}:\locsist(D)\rightarrow\SStar(D)$. By \cite[Theorem 2.10, Corollary 2.11, and Proposition 3.2]{fohu}, $\boldsymbol{\lambda}$ is an injective map whose image is exactly $\SStarstab(D)$.

On the set $\locsist(D)$ of localizing sytem on $D$ we can introduce a natural topology, that we still call the \emph{Zariski topology}, whose subbasic open sets are the $\mbox{\rm\texttt{W}}_I:=\{\mathcal{F}\in \locsist(D)\mid I\in\mathcal{F}\}$, as $I$ varies among the ideals of $D$.

\begin{prop} \label{locsist-stable}
Let $D$ be an integral domain. The map $\boldsymbol{\lambda}:\locsist(D)\rightarrow \SStarstab(D)$, $\mathcal{F}\mapsto\star_{\mathcal{F}}$, establishes a homeomorphism between spaces endowed with the Zariski topologies.
\end{prop}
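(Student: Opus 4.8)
The plan is to show that $\boldsymbol{\lambda}$ is a continuous, closed (or open) bijection, exploiting the fact that it is already known to be a bijection from $\locsist(D)$ onto $\SStarstab(D)$ by the cited results of \cite{fohu}. Since both spaces carry topologies defined by an explicit subbasis of open sets --- the sets $\mbox{\rm\texttt{W}}_I$ on the source and the sets $\mbox{\rm\texttt{V}}_I\cap\SStarstab(D)$ on the target, with $I$ ranging over the nonzero ideals of $D$ (using Proposition \ref{stable:subret}\ref{subbasis:stable} to reduce the subbasis on the target to modules of the form $I = E\cap D$) --- the natural strategy is to verify that $\boldsymbol{\lambda}$ matches these two subbases to each other. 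Concretely, I would prove the single key identity
\begin{equation*}
\boldsymbol{\lambda}(\mbox{\rm\texttt{W}}_I)=\mbox{\rm\texttt{V}}_I\cap\SStarstab(D),\qquad\text{equivalently}\qquad \boldsymbol{\lambda}^{-1}(\mbox{\rm\texttt{V}}_I\cap\SStarstab(D))=\mbox{\rm\texttt{W}}_I,
\end{equation*}
for every nonzero ideal $I$ of $D$. Once this is established for each subbasic set, continuity of $\boldsymbol{\lambda}$ and continuity of $\boldsymbol{\lambda}^{-1}$ follow immediately, and since $\boldsymbol{\lambda}$ is a bijection this gives the homeomorphism.

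The verification of the identity is the technical heart, and it comes down to unwinding the two definitions. First I would record the elementary equivalence $I\in\mathcal{F}$ if and only if $I^{\star_{\mathcal{F}}}=D^{\star_{\mathcal{F}}}$; indeed, if $I\in\mathcal{F}$ then $1\in (D:I)\subseteq D^{\star_{\mathcal{F}}}$ forces $I^{\star_{\mathcal{F}}}=D^{\star_{\mathcal{F}}}$, and conversely, if $1\in I^{\star_{\mathcal{F}}}=\bigcup\{(I:H)\mid H\in\mathcal{F}\}$, then $H\subseteq I$ for some $H\in\mathcal{F}$, whence $I\in\mathcal{F}$ by the first axiom of a localizing system. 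Next, for a \emph{stable} operation $\star_{\mathcal{F}}$ we have $1\in I^{\star_{\mathcal{F}}}$ if and only if $I^{\star_{\mathcal{F}}}=D^{\star_{\mathcal{F}}}$: the forward direction is the computation just given applied with $\mathcal{F}=\{J\mid J^{\star_{\mathcal{F}}}=D^{\star_{\mathcal{F}}}\}$ (or, more directly, stability gives $1\in I^{\star_{\mathcal{F}}}\cap D^{\star_{\mathcal{F}}}=(I\cap D)^{\star_{\mathcal{F}}}$, reducing to ideals), and the reverse is trivial since $1\in D\subseteq D^{\star_{\mathcal{F}}}$. Chaining these equivalences yields, for any ideal $I$, that $\mathcal{F}\in\mbox{\rm\texttt{W}}_I$ ($\Leftrightarrow I\in\mathcal{F}$) is equivalent to $\star_{\mathcal{F}}\in\mbox{\rm\texttt{V}}_I$ ($\Leftrightarrow 1\in I^{\star_{\mathcal{F}}}$), which is exactly the desired identity.

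I anticipate that the genuine obstacle is not the subbasic-set bookkeeping but rather ensuring that the correspondence $I\in\mathcal{F}\Leftrightarrow 1\in I^{\star_{\mathcal{F}}}$ is clean in both directions and, in particular, that distinct localizing systems are genuinely separated by these subbasic sets so that nothing is lost when passing to the Zariski topology on $\locsist(D)$. This is guaranteed precisely by the injectivity of $\boldsymbol{\lambda}$ together with the recovery formula: a localizing system $\mathcal{F}$ is completely determined by the collection $\{I\mid I\in\mathcal{F}\}$, and the equivalence above shows this collection is recovered from $\star_{\mathcal{F}}$ via the sets $\mbox{\rm\texttt{V}}_I$. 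I would therefore close the argument by noting that, because $\boldsymbol{\lambda}$ is a bijection carrying a subbasis onto a subbasis (and its inverse does likewise), it is a homeomorphism, citing \cite{fohu} only for the bijectivity and the defining properties of $\star_{\mathcal{F}}$ that make the membership equivalence hold.
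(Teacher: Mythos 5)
Your proposal is correct and takes essentially the same route as the paper's proof: both rely on the bijectivity of $\boldsymbol{\lambda}$ from \cite{fohu} and then match the subbasic open sets by verifying $\boldsymbol{\lambda}^{-1}(\mbox{\rm\texttt{V}}_I\cap\SStarstab(D))=\mbox{\rm\texttt{W}}_I$ for each ideal $I$, via the equivalence $I\in\mathcal{F}\Leftrightarrow 1\in I^{\star_{\mathcal{F}}}\Leftrightarrow H\subseteq I$ for some $H\in\mathcal{F}$. One small slip to fix in your forward direction: the containment you need is $1\in(I:I)\subseteq I^{\star_{\mathcal{F}}}$, not $1\in(D:I)\subseteq D^{\star_{\mathcal{F}}}$ (the latter two facts are true but force nothing); with that correction your argument coincides with the paper's.
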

\begin{proof}
By the previous remarks, we only need to show that $\boldsymbol{\lambda}$ is continuous and open.
Let $\mbox{\rm\texttt{U}}_I$ be a subbasic open set of $\SStarstab(D)$. Then,
$$ 
\begin{array}{rl}
\boldsymbol{\lambda}^{-1}(\mbox{\rm\texttt{U}}_I)=&\{\mathcal{F}\in \locsist(D) \mid\star_{\mathcal{F}}\in \mbox{\rm\texttt{U}}_I\}= \\
=& \{\mathcal{F}\in \locsist(D)\mid 1\in(I:H)\text{~for some~}H\in\mathcal{F}\}= \\
=& \{\mathcal{F}\in \locsist(D)\mid H\subseteq I\text{~for some~}H\in\mathcal{F}\}=\bigcup_{H\subseteq I}\mbox{\rm\texttt{W}}_H ,
\end{array}
$$
and thus $\boldsymbol{\lambda}$ is open. Moreover, $\bigcup_{H\subseteq I}\mbox{\rm\texttt{W}}_H=\mbox{\rm\texttt{W}}_I$: indeed, if $\mathcal{F}\in \mbox{\rm\texttt{W}}_H$ then $H\in\mathcal{F}$, and so $I\in\mathcal{F}$, while the left hand union trivially contains $\mbox{\rm\texttt{W}}_I$. Therefore, $\boldsymbol{\lambda}^{-1}(\mbox{\rm\texttt{U}}_I)=\mbox{\rm\texttt{W}}_I$, and, since $\boldsymbol{\lambda}$ is bijective, $\boldsymbol{\lambda}(\mbox{\rm\texttt{W}}_I)=\mbox{\rm\texttt{U}}_I$. Therefore, $\boldsymbol{\lambda}$ is both continuous and open, and thus a homeomorphism.
\end{proof}


\section{Spectral semistar operations}\label{sect:spectral}
 If $Y$ is a 
subset of the prime spectrum $\spec(D)$ of an integral domain $D$, then we define the semistar operation $\mbox{\it\texttt{s}}_Y$ \emph{induced} by $Y$ as the semistar operation associated to the set $\boldsymbol{\mathcal{T}}(Y) := \{D_P \mid P \in Y\}$, i.e.,
\begin{equation*}
E^{\mbox{\it\tiny\texttt{s}$_Y$}}:=\bigcap \{ED_{P} \mid P\in Y\},\;  \mbox{ for every } E\in \FF(D).
\end{equation*}
 If $Y=\emptyset$, we  have  an empty intersection, and we set as usual 
 $E^{s_\emptyset}:=K$ for every $E\in\FF(D)$ (or, equivalently,  $\mbox{\it\texttt{s}}_\emptyset := \wedge_{\{K\}}$).

 A semistar operation of the type $\mbox{\it\texttt{s}}_Y$, for some $Y \subseteq \spec(D)$, is called a \emph{spectral} semistar operation.  

 Note that, if we take $Y = \{(0)\}$,  we also have that $\mbox{\it\texttt{s}}_Y = \wedge_{\{K\}}$. Therefore, without loss of generality, in the definition of spectral semistar operation we can assume $ \emptyset \neq Y\subseteq \spec(D)$.

Denote by $\SStarsp(D)$ the set of spectral semistar operations, and by  $\SStarstabft(D)$ the set of spectral semistar operations of finite type.

Since each localization of $D$ is $D$-flat, and the infimum of a family of stable semistar operation is again stable, using Proposition \ref{immersione:stab} we see that every spectral semistar operation is stable. On the other hand, not every stable operation is spectral; 
however, a stable semistar operation is spectral if and only if it is  quasi-spectral \cite[Proposition 4.23(2)]{fohu}. 
In particular, every finite type stable operation is spectral (cf. \cite[Corollary 4.2]{an-overrings} and \cite[page 185 and Theorem 4.12(3)]{fohu}), so that $\SStarstabft(D)$  coincides with
 the set of stable operations of finite type  (sometimes denoted by ${\widetilde{\mbox{\rm\texttt{SStar}}}}(D)$).

Like for $\stu$, we can associate to each semistar operation $\star$ a stable semistar operation of finite type $\stt$ by defining, for every $E\in\FF(D)$,
\begin{equation*}
\begin{array}{rl}
E^{\stt}:= & \bigcup \{ (E:J)\mid J \mbox{ nonzero finitely generated ideal of } D \\
& \hskip 50pt \mbox{ such that }J^\star =D^\star\}.
\end{array}
\end{equation*}
The stable semistar operation of finite type $\stt$ is smaller than $\star$, and it is the biggest stable semistar operation of finite type smaller than $\star$. It follows that $\star$ is stable of finite type if and only if $\star=\stt$. 

\smallskip

In the following proposition, we collect some of the properties concerning the relation between $Y$ and $\mbox{\it\texttt{s}}_Y$.
As usual, for each subset $Y$ of  $\spec(D)$, we set $Y^{\mbox{\rm\tiny\mathttt{gen}}}:= \{z \in \spec(D) \mid y \in \chius(\{z\}) \mbox{ for some } y \in Y \}$ and we denote $\mbox{\rm{\texttt{Cl}}}^{\mbox{\rm\tiny\mathttt{inv}}}(Y)$ the closure of $Y$ in the inverse topology of  $\spec(D)$.

\begin{prop}\label{spettrali} {\em{(cf. \cite[Corollaries 4.4 and 5.2, Proposition 5.1]{FiSp} and \cite[Lemma 4.2 and Remark 4.5]{fohu}})}
	Let $D$ be an integral domain and let $Y$ and $Z$ be two nonempty subsets of $\spec(D)$. The following statements hold.  
\begin{enumerate}[\rm(1)]
\item $\mbox{\it\texttt{s}}_Y = \mbox{\it\texttt{s}}_Z$ if and only if  $Y^{\mbox{\rm\tiny\texttt{gen}}} = Z^{\mbox{\rm\tiny\texttt{gen}}}$.
\item $\mbox{\it\texttt{s}}_Y$ is of finite type if and only if $Y$ is quasi-compact. 
\item  $\widetilde{\mbox{\it\texttt{s}}_Y}=\widetilde{\mbox{\it\texttt{s}}_Z}$ if and only if    $\Cl^{\mbox{\tiny\rm\texttt{inv}}}(Y) =\Cl^{\mbox{\tiny\rm\texttt{inv}}}(Z)$.		
\item[\rm (4)] $\widetilde{\mbox{\it\texttt{s}}_Y}=\mbox{\it\texttt{s}}_{{\tiny \Cl}^{\mbox{\tiny\rm\texttt{inv}}}(Y)}$.
\end{enumerate}
\end{prop}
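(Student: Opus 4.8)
The plan is to prove all four statements together, since they are closely linked, treating (1)--(3) as preparatory and isolating (4) as the real content. The underlying intuition I would keep in mind throughout is that $\mbox{\it\texttt{s}}_Y$ depends on $Y$ only through the flat overrings $D_P$ it assembles, so that whenever two points have a specialization/generization relation, or whenever a point can be ``squeezed'' inside the inverse-closure of the others, the associated intersection of localizations is unchanged.

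\emph{Preparatory steps.} For (1), I would first observe that $\mbox{\it\texttt{s}}_Y=\mbox{\it\texttt{s}}_{Y^{\mbox{\rm\tiny\texttt{gen}}}}$: if $P\in\chius(\{Q\})$ then $Q\subseteq P$, hence $D_P\subseteq D_Q$ and $ED_P\supseteq ED_Q$, so adding generizations to $Y$ does not shrink the intersection $\bigcap_{P\in Y} ED_P$, while the reverse containment is automatic since $Y\subseteq Y^{\mbox{\rm\tiny\texttt{gen}}}$. This gives one implication of (1), and the converse (that $\mbox{\it\texttt{s}}_Y=\mbox{\it\texttt{s}}_Z$ forces $Y^{\mbox{\rm\tiny\texttt{gen}}}=Z^{\mbox{\rm\tiny\texttt{gen}}}$) I would extract from the cited results \cite[Corollaries 4.4 and 5.2, Proposition 5.1]{FiSp} by recovering each generization $P$ from the semistar operation itself (concretely, $P$ is detected by the condition $1\notin E^{\mbox{\it\tiny\texttt{s}$_Y$}}$ for suitable $E$ built from $P$). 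Statement (2) is exactly \cite[Proposition 5.1]{FiSp} (finite type $\Leftrightarrow$ quasi-compactness of $Y$), so I would simply cite it.

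\emph{The main step.} The crux is (4), and (3) will then follow formally. For (4) I must show that applying the finite-type stabilization $\widetilde{(-)}$ to $\mbox{\it\texttt{s}}_Y$ produces precisely the spectral operation induced by the inverse-closure $\Cl^{\mbox{\tiny\rm\texttt{inv}}}(Y)$. The strategy is to compare the two operations on each $E\in\FF(D)$ by unwinding the definition of $\stt$: a finitely generated ideal $J$ satisfies $J^{\mbox{\it\tiny\texttt{s}$_Y$}}=D^{\mbox{\it\tiny\texttt{s}$_Y$}}$ exactly when $JD_P=D_P$ for all $P\in Y$, i.e.\ when $J\not\subseteq P$ for every $P\in Y$, which says precisely that the basic quasi-compact open set $\{P\mid J\not\subseteq P\}$ of $\spec(D)$ contains $Y$. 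Since by definition $\Cl^{\mbox{\tiny\rm\texttt{inv}}}(Y)$ is the intersection of all open quasi-compact subspaces containing $Y$, the family of such finitely generated $J$'s ``sees'' exactly the inverse-closure: $J\not\subseteq P$ for all $P\in Y$ iff $J\not\subseteq P$ for all $P\in\Cl^{\mbox{\tiny\rm\texttt{inv}}}(Y)$. This is the identity that lets me match the two sides. I would then combine it with $E^{\stt}=\bigcup\{(E:J)\mid J^{\mbox{\it\tiny\texttt{s}$_Y$}}=D^{\mbox{\it\tiny\texttt{s}$_Y$}}\}$ and the fact, recorded in the excerpt, that finite-type stable operations are spectral, so that $\widetilde{\mbox{\it\texttt{s}}_Y}=\mbox{\it\texttt{s}}_W$ for some $W$; identifying $W$ with $\Cl^{\mbox{\tiny\rm\texttt{inv}}}(Y)$ via the quasi-compactness characterization of (2) finishes the argument. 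Much of the technical bookkeeping here is already available in \cite[Lemma 4.2 and Remark 4.5]{fohu}, which I would cite rather than reprove.

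\emph{Deducing (3).} Once (4) is in hand, statement (3) is immediate: $\widetilde{\mbox{\it\texttt{s}}_Y}=\widetilde{\mbox{\it\texttt{s}}_Z}$ iff $\mbox{\it\texttt{s}}_{\Cl^{\mbox{\tiny\rm\texttt{inv}}}(Y)}=\mbox{\it\texttt{s}}_{\Cl^{\mbox{\tiny\rm\texttt{inv}}}(Z)}$, and applying (1) together with the fact that an inverse-closed set is stable under generization (so that $\Cl^{\mbox{\tiny\rm\texttt{inv}}}(Y)$ equals its own generic closure) collapses the generization condition of (1) to plain equality of the inverse-closures. The step I expect to be the genuine obstacle is the key identity in (4): I must verify with care that the inverse topology's quasi-compact open sets correspond bijectively to the finitely generated ideals $J$ with $J\not\subseteq P$, and in particular that no ``infinitely generated'' obstruction is lost when passing to finite type. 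Handling the empty-set and $(0)$-point conventions consistently, as flagged in the definition of $\mbox{\it\texttt{s}}_Y$, is a minor but necessary caveat throughout.
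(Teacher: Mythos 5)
Your reconstruction is essentially correct, but you should know that the paper itself supplies \emph{no proof} of Proposition \ref{spettrali}: the statement is recorded as a collection of known facts, justified only by the citations \cite[Corollaries 4.4 and 5.2, Proposition 5.1]{FiSp} and \cite[Lemma 4.2 and Remark 4.5]{fohu} (just as, for the valuation analogue, Lemma \ref{zar-inv}, the paper proves only part (1) directly and cites the rest). So your attempt is not an alternative to an argument in the text; it is a reconstruction of the arguments behind the citations, and it follows the standard lines. The heart of your treatment of (4) is exactly the right mechanism: for $J$ finitely generated, $J^{s_Y}=D^{s_Y}$ iff $1\in JD_P$ for all $P\in Y$ iff $Y\subseteq \texttt{D}(J)$ iff $\Cl^{\rm inv}(Y)\subseteq \texttt{D}(J)$, the last step because $\texttt{D}(J)$ is a quasi-compact open set and $\Cl^{\rm inv}(Y)$ is the intersection of all such sets containing $Y$. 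Consequently $\widetilde{s_Y}$ and $\widetilde{s_{\Cl^{\rm inv}(Y)}}$ are built from the same family of ideals $J$, hence coincide; and since $\Cl^{\rm inv}(Y)$ is quasi-compact (it is patch-closed), part (2) gives that $s_{\Cl^{\rm inv}(Y)}$ is of finite type, while spectrality gives stability, so $s_{\Cl^{\rm inv}(Y)}=\widetilde{s_{\Cl^{\rm inv}(Y)}}$, which closes (4). Your deduction of (3) from (4) and (1), using that inverse-closed sets are generization-stable, is correct, and your detection of generizations in (1) is sound: indeed $1\notin P^{s_Y}$ if and only if $P\in Y^{\rm gen}$, which forces $Y^{\rm gen}=Z^{\rm gen}$ when $s_Y=s_Z$. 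What your route buys is a self-contained proof making the logical order (1),(2) $\Rightarrow$ (4) $\Rightarrow$ (3) explicit; what the paper's route buys is brevity, these facts being available in the cited literature.

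Two slips should be corrected, neither fatal. First, in (1) you wrote that $Q\subseteq P$ gives $D_P\subseteq D_Q$ and $ED_P\supseteq ED_Q$; the second inclusion is backwards --- monotonicity gives $ED_P\subseteq ED_Q$ --- and it is precisely this corrected direction which shows that the extra terms $ED_Q$, for $Q\in Y^{\rm gen}$, do not cut down the intersection $\bigcap\{ED_P\mid P\in Y\}$. Second, in (4) the phrase ``identifying $W$ with $\Cl^{\rm inv}(Y)$ via the quasi-compactness characterization of (2)'' is not, by itself, an argument: (2) only tells you that $s_{\Cl^{\rm inv}(Y)}$ is of finite type. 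To identify the set you must either run the same-family-of-ideals argument sketched above, or compute $\qspec^{\widetilde{s_Y}}(D)=\Cl^{\rm inv}(Y)$ and invoke the fact (from \cite{fohu}) that a stable quasi-spectral operation is induced by its quasi-spectrum, after which (1) pins down the set up to generization. With your key identity in hand, either finish is a few lines, but one of them has to be said.
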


\smallskip

\begin{oss} \label{gen} Note that the equivalence (1) of Proposition  \ref{spettrali} can   also be viewed in a topological way, since the sets of the type 
 $Y^{\mbox{\rm\tiny\mathttt{gen}}}$, for $Y$ varying among the subsets of $\spec(D)$, are the closed sets of a topology called the \emph{R(ight)-topology on} $\spec(D)$, which is the finest topology on $\spec(D)$ compatible with the opposite order of the given order on $\spec(D)$ \cite[Lemma 2.1, Proposition 2.3(b)]{dofopa-80}.
 \end{oss}
 
\smallskip

The following is a ``finite type version'' of Propositions \ref{stable:subret}, \ref{stable:stare} and \ref{locsist-stable}.

\begin{prop}\label{stableft}
Let $D$ be an integral domain, and denote by\ $\insid_{\boldsymbol{f}}$ the set of nonzero finitely generated ideals of $D$. Let also $\widetilde{\Phi}:\SStar(D)\rightarrow\SStarstabft(D)$ be the map defined by  $\widetilde{\Phi}(\star):=\stt$, for each $\star \in \SStar(D)$.
\begin{enumerate}[\rm(1)]
\item\label{subbasis:stableft} The set $
\{ \widetilde{\mbox{\rm\texttt{{U}}}}_J:=\mbox{\rm\texttt{V}}_J\cap\SStarstabft(D)\mid J\in\insid_{\boldsymbol{f}}\}$
 is a subbasis   of open and quasi-compact subspaces for $\SStarstabft(D)$.
\item\label{retraction:stableft} If $\SStar(D)$ is endowed with the Zariski topology, $\widetilde{\Phi}$ is a topological retraction.
\item\label{weak topology:stableft} If $\SStar(D)$ is endowed with the topology generated by the family $\{\mbox{\rm\texttt{V}}_J\mid J\in\insid_{\boldsymbol{f}}\}$, then $\widetilde{\Phi}$ is the canonical  map onto the Kolmogoroff quotient space of $\SStar(D)$.
 
\item\label{stare:stableft} If $\ast$ is a stable \emph{star} operation of finite type on $D$, there is exactly one stable semistar operation of finite type $\star$ on $D$ such that $\star|_{\F(D)}=\ast$.
\item\label{locsist-stableft} The restriction $\boldsymbol{\lambda_f}:\locsistf(D)\rightarrow \SStarstabft(D)$ of $\boldsymbol{\lambda}$ establishes a homeomorphism between spaces endowed with the Zariski topologies.
\end{enumerate}
\end{prop}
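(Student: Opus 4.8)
The plan is to transcribe, in the finite-type key, the proofs of Propositions~\ref{stable:subret}, \ref{stable:stare} and \ref{locsist-stable}, the only genuinely new ingredient being the quasi-compactness asserted in part~(\ref{subbasis:stableft}). For the subbasis claim I would argue exactly as in Proposition~\ref{stable:subret}: for a stable $\star$ and any $E\in\FF(D)$ one has $\star\in\mbox{\rm\texttt{V}}_E$ if and only if $\star\in\mbox{\rm\texttt{V}}_{E\cap D}$, and $E\cap D$ is always a nonzero ideal, so one may pass to ideals; then, since $\star$ is of finite type, $1\in I^\star$ if and only if $1\in F^\star$ for some nonzero finitely generated $F\subseteq I$. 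Combining these gives $\mbox{\rm\texttt{V}}_E\cap\SStarstabft(D)=\bigcup\{\widetilde{\mbox{\rm\texttt{U}}}_F\mid F\in\insid_{\boldsymbol f},\ F\subseteq E\cap D\}$, so the $\widetilde{\mbox{\rm\texttt{U}}}_J$ form a (sub)basis of open subspaces. For the retraction in part~(\ref{retraction:stableft}) I would copy the computation of Proposition~\ref{stable:subret}, checking that $\widetilde\Phi^{-1}(\widetilde{\mbox{\rm\texttt{U}}}_J)=\mbox{\rm\texttt{V}}_J$ for every $J\in\insid_{\boldsymbol f}$: if $1\in J^\star$ then $J^\star=D^\star$, so taking $F=J$ in the definition of $\stt$ yields $1\in(J:J)\subseteq J^{\stt}$; conversely $1\in J^{\stt}$ forces $1\in(J:F)$ for some finitely generated $F\subseteq J$ with $F^\star=D^\star$, whence $1\in D^\star=F^\star\subseteq J^\star$. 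As $\widetilde\Phi$ fixes the points of $\SStarstabft(D)$ (where $\star=\stt$), it is a continuous retraction.

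The main obstacle is the quasi-compactness of each $\widetilde{\mbox{\rm\texttt{U}}}_J$. Here I would exploit the retraction together with the fact, established in \cite{FiSp}, that $\SStarf(D)$ is a spectral space whose distinguished subbasic sets $\mbox{\rm\texttt{V}}_F\cap\SStarf(D)$, with $F\in\insid_{\boldsymbol f}$, are quasi-compact. From the preimage computation above one obtains $\widetilde\Phi(\mbox{\rm\texttt{V}}_J\cap\SStarf(D))=\widetilde{\mbox{\rm\texttt{U}}}_J$: the inclusion $\subseteq$ holds because $\widetilde\Phi(\mbox{\rm\texttt{V}}_J)\subseteq\widetilde{\mbox{\rm\texttt{U}}}_J$, and $\supseteq$ because $\widetilde\Phi$ fixes the points of $\widetilde{\mbox{\rm\texttt{U}}}_J\subseteq\SStarstabft(D)\subseteq\SStarf(D)$. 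Since $\widetilde\Phi$ is continuous (by the previous paragraph) and $\mbox{\rm\texttt{V}}_J\cap\SStarf(D)$ is quasi-compact, its image $\widetilde{\mbox{\rm\texttt{U}}}_J$ is quasi-compact, which completes part~(\ref{subbasis:stableft}).

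The remaining parts are finite-type analogues of the earlier statements. For part~(\ref{weak topology:stableft}) the equality $\widetilde\Phi^{-1}(\widetilde{\mbox{\rm\texttt{U}}}_J)=\mbox{\rm\texttt{V}}_J$ gives continuity for the weak topology, and I would show that $\widetilde{\star_1}=\widetilde{\star_2}$ if and only if $\star_1,\star_2$ lie in exactly the same $\mbox{\rm\texttt{V}}_J$ ($J\in\insid_{\boldsymbol f}$), the forward direction using $1\in(J:J)\subseteq J^{\widetilde{\star_i}}$ as in Proposition~\ref{stable:subret} and the converse using that $\stt$ depends only on the set of finitely generated ideals $J$ with $J^\star=D^\star$. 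For part~(\ref{stare:stableft}), uniqueness follows from $T_0$-ness and the subbasis of part~(\ref{subbasis:stableft}): two extensions would be separated by some $\widetilde{\mbox{\rm\texttt{U}}}_J$, forcing $J^{\star_1}\neq J^{\star_2}$ for a finitely generated ideal $J\in\F(D)$, contradicting that both restrict to $\ast$. For existence I would take $\star:=\widetilde{\ast_e}$, with $\ast_e$ the trivial semistar extension of Remark~\ref{eq:defstare}, which is stable of finite type by construction and restricts to $\ast$ because $\ast$ is already stable of finite type (cf. \cite{ac}); the point to watch is that $\ast_e$ itself is in general \emph{not} of finite type, so one must pass through $\widetilde{\ast_e}$ rather than $\ast_e$, using that for $E\in\F(D)$ the finitely generated ideals $J$ with $J^{\ast_e}=D$ are exactly those with $J^\ast=D$. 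Finally, for part~(\ref{locsist-stableft}) I would restrict the homeomorphism $\boldsymbol{\lambda}$ of Proposition~\ref{locsist-stable}: using that $\mathcal F$ is of finite type if and only if $\star_{\mathcal F}$ is (from \cite{fohu}), $\boldsymbol{\lambda}$ carries $\locsistf(D)$ bijectively onto $\SStarstabft(D)$, and since $\boldsymbol{\lambda}^{-1}(\mbox{\rm\texttt{V}}_I\cap\SStarstab(D))=\mbox{\rm\texttt{W}}_I$ the restriction $\boldsymbol{\lambda_f}$ is continuous and open for the subspace Zariski topologies, hence a homeomorphism.
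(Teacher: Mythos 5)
Your parts (2)--(5), and the subbasis half of part (1), are correct and are essentially what the paper does: the paper proves them by transcribing Propositions \ref{stable:subret}, \ref{stable:stare} and \ref{locsist-stable} into the finite-type setting (for (4) it takes the extension $\overline{\ast_e}$ of Proposition \ref{stable:stare} and observes that $\overline{\ast_e}=\widetilde{\ast_e}$ when $\ast$ has finite type, which is your existence argument in disguise; for (5) it only notes that the image of $\boldsymbol{\lambda_f}$ is exactly $\SStarstabft(D)$). Your decomposition $\mbox{\rm\texttt{V}}_E\cap\SStarstabft(D)=\bigcup\{\widetilde{\mbox{\rm\texttt{U}}}_F\mid F\in\insid_{\boldsymbol{f}},\ F\subseteq E\cap D\}$ and the computation $\widetilde{\Phi}^{-1}(\widetilde{\mbox{\rm\texttt{U}}}_J)=\mbox{\rm\texttt{V}}_J$ also match the paper.

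The gap is in the quasi-compactness of $\widetilde{\mbox{\rm\texttt{U}}}_J$, which you yourself single out as the only genuinely new ingredient. Your reduction --- $\widetilde{\mbox{\rm\texttt{U}}}_J=\widetilde{\Phi}\bigl(\mbox{\rm\texttt{V}}_J\cap\SStarf(D)\bigr)$ is a continuous image of a quasi-compact set --- is sound as a scheme, but it rests entirely on the claim that $\mbox{\rm\texttt{V}}_J\cap\SStarf(D)$ is quasi-compact, which you attribute to \cite{FiSp}. That is not what \cite{FiSp} states: \cite[Theorem 2.13]{FiSp} asserts that $\SStarf(D)$ is a spectral space, and spectrality alone does not make any prescribed subbasic open set quasi-compact (in the prime spectrum of a ring, $\texttt{D}(I)$ is generally not quasi-compact when $I$ is not finitely generated). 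So the one step that actually required an argument is delegated to a citation that does not contain it. The paper supplies the argument directly, with no detour through $\SStarf(D)$: for $J\in\insid_{\boldsymbol{f}}$ put $Y:=\{P\in\spec(D)\mid P\nsupseteq J\}$; then $Y$ is quasi-compact, so $\mbox{\it\texttt{s}}_Y$ is spectral of finite type (Proposition \ref{spettrali}(2)), and $\mbox{\it\texttt{s}}_Y\in\widetilde{\mbox{\rm\texttt{U}}}_J$ since $1\in JD_P$ for every $P\in Y$; moreover $\mbox{\it\texttt{s}}_Y$ is the \emph{minimum} of $\widetilde{\mbox{\rm\texttt{U}}}_J$, because every $\star\in\widetilde{\mbox{\rm\texttt{U}}}_J$ equals $\mbox{\it\texttt{s}}_Z$ with $Z=\qspec^\star(D)\subseteq Y$, whence $\mbox{\it\texttt{s}}_Y\boldsymbol{\preceq}\star$. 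Since each $\mbox{\rm\texttt{V}}_E$, hence every Zariski-open set, is upward closed for $\boldsymbol{\preceq}$, any open set containing $\mbox{\it\texttt{s}}_Y$ contains all of $\widetilde{\mbox{\rm\texttt{U}}}_J$, and quasi-compactness follows at once. Note that this same minimum argument (using that finite-type operations are quasi-spectral, so $\mbox{\it\texttt{s}}_Y\boldsymbol{\preceq}\widetilde{\star}\boldsymbol{\preceq}\star$) proves the very fact you wanted to cite, i.e.\ quasi-compactness of $\mbox{\rm\texttt{V}}_J\cap\SStarf(D)$; alternatively that fact can be extracted from the \emph{proof} (not the statement) of \cite[Theorem 2.13]{FiSp}, since the subbasic sets used in the ultrafilter criterion of \cite{Fi} are clopen, hence quasi-compact, in the associated compact ultrafilter topology, which refines the Zariski topology. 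Either way, your proof needs this supplement to be complete.
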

\begin{proof}
The proofs follow essentially from the general case, with some additional care. For \ref{subbasis:stableft}, we note that, for each $I \in \insid$,  $\mbox{\rm\texttt{V}}_I\cap\, \SStarstabft(D)=\bigcup\{\mbox{\rm\texttt{V}}_J\cap\, \SStarstabft(D)\mid J\subseteq I, \ J\in\f(D)\}$ (compare \cite[Remark 2.2(d)]{FiSp}).   To show that $\widetilde{\mbox{\rm\texttt{{U}}}}_J$ is quasi-compact when  $J \in \insid_{\boldsymbol{f}}$, consider the semistar operation $s_Y$, where $Y:=\{P\in \spec(D)\mid P\nsupseteq J \}$.  Since $Y$ is quasi-compact, $s_Y$ is of finite type (see \cite[Corollary 4.4]{FiSp} or Proposition \ref{spettrali}(2)). Moreover, $1\in JD_P$ if and only if $P\in Y$; therefore, $s_Y$ is the minimum of $\widetilde{\mbox{\rm\texttt{{U}}}}_J$, and every open set containing $s_Y$ contains the whole $\widetilde{\mbox{\rm\texttt{{U}}}}_J$. It follows that $\widetilde{\mbox{\rm\texttt{{U}}}}_J$ is quasi-compact.

For \ref{stare:stableft}, it is enough to note that if $\ast$ is of finite type then $\overline{\ast_e}=\widetilde{\,\ast_e}$, so that $\star$ is of finite type if $\ast$ is.
For \ref{locsist-stableft}, note that the image of $\boldsymbol{\lambda_f}$ is exactly $\SStarstabft(D)$   \cite[Theorem 2.10, Corollary 2.11, and Proposition 3.2]{fohu}.
\end{proof}

The remaining part of the present section is devoted to the proof that $\SStarstabft(D)$ is a spectral space, in the sense of M. Hochster \cite{ho}. We start by studying the supremum and the infimum of a family of spectral operations.

\begin{lemma}\label{prop:infsup}
Let $\mathscr{D}$ be a nonempty set of spectral semistar operations. For each spectral semistar operation $\star $, set $\Delta(\star) := \qspec^\star(D)$.

\begin{enumerate}[\rm(1)]
\item\label{prop:infsup:inf} $\wedge_{\mathscr{D}}$ 
is spectral with $\Delta(\wedge_{\mathscr{D}})=
 \bigcup \{\Delta(\star) \mid \star \in \mathscr{D}\}$.
\item\label{prop:infsup:sup} If $\vee_\mathscr{D}$ is quasi-spectral, then is spectral with $\Delta(\vee_{\mathscr{D}})=
 \bigcap \{\Delta(\star) \mid \star \in \mathscr{D}\}$.
\end{enumerate}
\end{lemma}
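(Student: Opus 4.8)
The plan is to use that a spectral operation is completely encoded by a subset of $\spec(D)$, together with the formula $\qspec^{s_Y}(D)=Y^{\mathrm{gen}}$. The latter is elementary: for a nonzero prime $P$ one has $PD_Q\cap D=P$ if $P\subseteq Q$ and $PD_Q\cap D=D$ otherwise, so $P=P^{s_Y}\cap D$ exactly when $P\subseteq Q$ for some $Q\in Y$, i.e. when $P\in Y^{\mathrm{gen}}$. Combined with Proposition \ref{spettrali}(1) (which gives $s_{Y^{\mathrm{gen}}}=s_Y$), this also yields $s_{\Delta(\star)}=\star$ for every spectral $\star$.

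For part \ref{prop:infsup:inf}, I would write each $\star\in\mathscr{D}$ as $s_{Y_\star}$ and simply unwind the definitions: $E^{\wedge_{\mathscr{D}}}=\bigcap_{\star}\bigcap_{P\in Y_\star}ED_P=\bigcap_{P\in\bigcup_\star Y_\star}ED_P$, so $\wedge_{\mathscr{D}}=s_{\bigcup_\star Y_\star}$ is spectral. The equality of quasi-spectra is then $\Delta(\wedge_{\mathscr{D}})=(\bigcup_\star Y_\star)^{\mathrm{gen}}=\bigcup_\star Y_\star^{\mathrm{gen}}=\bigcup_\star\Delta(\star)$, the middle step being that generization commutes with arbitrary unions.

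For part \ref{prop:infsup:sup}, set $\sigma:=\vee_{\mathscr{D}}$ and $Z:=\bigcap_\star\Delta(\star)$. Since $Z\subseteq\Delta(\star)$ and $Y\mapsto s_Y$ reverses inclusions, $s_Z\succeq s_{\Delta(\star)}=\star$ for all $\star$, so $s_Z$ is an upper bound of $\mathscr{D}$ and thus $\sigma\preceq s_Z$. I would then identify $\qspec^\sigma(D)$ with $Z$: if $P$ is a quasi-$\sigma$-prime, then $P=P^\sigma\cap D\supseteq P^\star\cap D\supseteq P$ (as $\star\preceq\sigma$) forces $P\in\Delta(\star)$ for every $\star$, giving $\qspec^\sigma(D)\subseteq Z$; conversely, for $P\in Z$ the inequality $\sigma\preceq s_Z$ yields $P\subseteq P^\sigma\cap D\subseteq P^{s_Z}\cap D=P$ (the last equality since the factor $PD_P$ already contracts to $P$), so $P$ is a quasi-$\sigma$-prime. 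Hence $\Delta(\sigma)=Z$, the asserted formula, and this part needs no further hypothesis.

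The remaining task, and the genuine obstacle, is to prove $\sigma$ is spectral, i.e. the reverse inequality $s_Z\preceq\sigma$; this is precisely where quasi-spectrality is indispensable (in general the supremum of spectral, or even of stable, operations need not be spectral). Given $x\in E^{s_Z}=\bigcap_{P\in Z}ED_P$, I would look at the transporter $I:=(E^\sigma:_D x)$: from $Ix\subseteq E^\sigma$ and $(\star_1)$ together with idempotency one gets $x\,I^\sigma=(xI)^\sigma\subseteq E^\sigma$, whence $I^\sigma\cap D\subseteq I$ and $I$ is a quasi-$\sigma$-ideal (nonzero, since $Z\neq\emptyset$ supplies some $s\in D\setminus P$ with $sx\in E$). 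If $x\notin E^\sigma$ then $I$ is proper, so quasi-spectrality places $I$ inside a quasi-$\sigma$-prime $P_0\in\Delta(\sigma)=Z$; but $x\in ED_{P_0}$ produces $t\in I$ with $t\notin P_0$, a contradiction. Therefore $x\in E^\sigma$, proving $s_Z\preceq\sigma$, and with $\sigma\preceq s_Z$ we conclude $\sigma=s_Z\in\SStarsp(D)$ with $\Delta(\sigma)=\bigcap_\star\Delta(\star)$. The degenerate case $Z=\emptyset$ is dispatched by the convention $s_\emptyset=\wedge_{\{K\}}$.
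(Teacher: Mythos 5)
Your proposal is correct and follows essentially the same route as the paper: part (1) is the same unwinding of definitions (using $\star=s_{\Delta(\star)}$ for spectral $\star$), and in part (2) you establish $\vee_{\mathscr{D}}\preceq s_Z$ via the upper-bound property and $\Delta(\vee_{\mathscr{D}})=Z$ by the same two inclusions. The only difference is that where the paper finishes by citing \cite[Proposition 4.8]{fohu} for the inclusion $\bigcap\{ED_P \mid P\in\qspec^{\vee_{\mathscr{D}}}(D)\}\subseteq E^{\vee_{\mathscr{D}}}$ under quasi-spectrality, you reprove that fact inline with the transporter argument $I=(E^{\sigma}:_D x)$, which makes your write-up self-contained but is mathematically the same step.
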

\begin{proof}
\ref{prop:infsup:inf} Set $\boldsymbol{\Delta} := \bigcup \{\Delta(\star) \mid \star \in \mathscr{D}\}$. For each $E \in \FF(D)$ 
\begin{equation*}
E^{\wedge_{\mathscr{D}}} =
\bigcap\{ ED_P \mid P\in \Delta(\star),\star \in \mathscr{D}\} =
 \bigcap\{ ED_P \mid P\in \boldsymbol{\Delta}\} . 
 \end{equation*}
 In particular, ${\wedge_{\mathscr{D}}}$ is spectral and $\boldsymbol{\Delta}\subseteq\qspec^{\wedge_{\mathscr{D}}}(D)$. On the other hand, if $Q\in\qspec^{\wedge_{\mathscr{D}}}(D)$, then $Q^\star\neq D^\star$ for some $\star\in \mathscr{D}$, and this implies that $Q\in\qspec^\star(D)$. 
 Therefore, $\boldsymbol{\Delta}=\qspec^{\wedge_{\mathscr{D}}}(D)$.

\ref{prop:infsup:sup} Let $P\in\qspec^{\vee_\mathscr{D}}(D)$. Then, $P$ belongs to $\qspec^{\star}(D)$ 
 for each $\star\in \mathscr{D}$, i.e., $\qspec^{\vee_\mathscr{D}}(D)\subseteq
 \bigcap \{\Delta(\star) \mid \star \in \mathscr{D}\}$.  Since each $\star \in \mathscr{D}$ is spectral, then, for each $E\in\FF(D)$, $E^\star\subseteq ED_P$ for all $P\in\Delta(\star)$, and in particular for all $P\in\bigcap\{\Delta(\star) \mid \star\in\mathscr{D}\}$. Hence,
 \begin{equation*}
E^{\vee_\mathscr{D}}\subseteq \bigcap \left\{ED_P \mid P\in\bigcap\{\Delta(\star) \mid \star\in\mathscr{D}\}\right\} \subseteq \bigcap \{ED_P \mid P \in\qspec^{\vee_\mathscr{D}}(D)\}.
\end{equation*}
However, if ${\vee_\mathscr{D}}$ is quasi-spectral, 
then it is known that the right hand side is contained in $E^{\vee_\mathscr{D}}$ \cite[Proposition 4.8]{fohu}.
 Therefore they are equal, and hence ${\vee_\mathscr{D}}$ is spectral, with $ \Delta(\vee_{\mathscr{D}})=
 \bigcap \{\Delta(\star) \mid \star \in \mathscr{D}\}$.
\end{proof}

\begin{ex}
In relation with Lemma \ref{prop:infsup}(2), we note that the supremum of a family of spectral semistar operations may not be quasi-spectral. Indeed, let $\insA$ be the ring of algebraic integers, i.e., the integral closure of $\insZ$ in the algebraic closure $\overline{\insQ}$ of $\insQ$. Recall that $\insA$ is a one-dimensional B\'ezout domain \cite[page 72]{kap}.

{\bf Claim 1.} \emph{For each maximal ideal $P$ of $\insA$, $\Max(\insA)\setminus\{P\}$ is not a quasi-compact subspace of $\Max(\insA)$ (endowed with the Zariski topology).}

By contradiction, since $\Max(\insA)\setminus\{P\}$ is open in $\Max(\insA)$ it would be equal to $\texttt{D}(J) \cap \Max(\insA)$ for some finitely generated ideal $J$ of $\insA$. Being $\insA$ a B\'ezout domain, this would imply that $\Max(\insA)\setminus\{P\}= \texttt{D}(\alpha) \cap \Max(\insA)$ for some $\alpha\in \insA$; in particular, the ideal $\alpha A$ would be $P$-primary. Let $K$ be the Galois closure of $\insQ(\alpha)$ over $\insQ$ and consider the prime ideal $P_K:=P\cap\mathcal{O}_K$, where $\mathcal{O}_K$ the ring of integers of the field $K$. Let $P\cap\insZ=p\insZ$ for some prime integer $p$ and let $F$ be a Galois extension of $\insQ$ where $p$ splits and such that $F\cap K=\insQ$ (there are infinitely many such fields $F$, since $p$ splits in infinitely many quadratic extensions of $\insQ$ and $K$ contains only a finite number of them). We claim that $P_K$ splits in the compositum $F\!K$: if this is true, then $\alpha$ would be contained in more than a single prime ideal of $\insA$, against the hypothesis.

 Set $P_{F\! K}:=P\cap\mathcal{O}_{F\! K}$ and $P_F:= P\cap\mathcal{O}_{F}=P_{F\! K}\cap\mathcal{O}_F $. Suppose $P_K$ does not split in $\mathcal{O}_{F\! K}$: then $P_K\mathcal{O}_{F\! K}$ would be primary to $P_{F\! K}$. 
 On the other hand, $P_F\cap\insZ=p\insZ$; since $p$ splits in $\mathcal{O}_F$, and the Galois group of $F$ over $\insQ$ acts transitively on the primes of $\mathcal{O}_F$ lying over $p$, there is an automorphism $\sigma$ of $F$ such that $\sigma(P_F)\neq P_F$. Since $K\cap F=\insQ$, there is an automorphism $\tau$ of $FK$ such that $\tau|_F=\sigma$ and $\tau|_K$ is the identity. Therefore, $\tau(P_K)=P_K$ and $\tau(P_{F\! K})$ must contain $P_K$, i.e., $\tau(P_{F\! K})= P_{F\! K}$. 
 However, $P_{F\! K}$ contains $P_F$, and $\tau(P_{F\! K})$ contains $\sigma(P_F)$; therefore, $P_{F\! K}$ must contain both $P_F$ and $\sigma(P_F)$, which is impossible. Therefore, $P_K$ splits in $\mathcal{O}_{F\! K}$.

{\bf Claim 2.} \emph{For every $P\in\Max(\insA)$, $\bigcap \{ \insA_Q \mid Q\in\Max(\insA)\setminus\{P\}\} =\insA$.}

Let $B:=\bigcap \{ \insA_Q \mid Q\in\Max(\insA)\setminus\{P\}\}$. By the previous claim, $\Max(\insA)\setminus\{P\}$ is not quasi-compact,  and then it follows immediately that $P$ belongs to the closure of $\Max(\insA)\setminus\{P\}$, with respect to the inverse topology.    In other words,  every maximal ideal of $\insA$ is a limit point in the inverse topology and so  $\Max(\insA)$ with the inverse topology is a perfect space. Finally,  by \cite[Proposition 5.6(4)]{olb2015}, we have $B=\insA$.  

We are ready now to show that the supremum of a family of spectral semistar operations on $\insA$ may not be quasi-spectral. For every $P\in\Max(\insA)$, let $\star_P:=\mbox{\it\texttt{s}}_{\Max(\insA)\setminus\{P\}}$, and define $\star:=\bigvee \{\star_P \mid P\in\Max(\insA)\}$. By Claim 2, $\insA^{\star_P}= \insA$ for every $P\in\Max(\insA)$, and thus $\insA^{\star}= \insA$. However, $P$ is not a $\star_P$-ideal since $P^{\star_P} = \insA$ and therefore $P^{\star} = \insA$ for every $P\in\Max(\insA)$. Since each nonzero principal (or, equivalently, finitely ge\-ne\-ra\-ted) integral ideal of $\insA$ is a $\star$-ideal and the set of nonzero prime $\star$-ideals of $\insA$ is empty, it follows that 
$\star$ is not quasi-spectral.
\end{ex}

\begin{teor}\label{stable spectral}
Let $D$ be an integral domain. The space $ \SStarstabft(D)$ of the stable semistar operations of finite type on $D$, endowed with the Zariski topology induced by $\SStar(D)$, is a spectral space. 
\end{teor}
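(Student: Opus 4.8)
The plan is to invoke the ultrafilter characterization of spectral spaces from \cite{Fi}: a $T_0$ space $X$ equipped with a chosen subbasis $\mathcal{S}$ of open sets is spectral as soon as, for every ultrafilter $\mathscr{U}$ on the underlying set of $X$, there is a point $x\in X$ such that, for each $S\in\mathcal{S}$, one has $x\in S$ if and only if $S\in\mathscr{U}$. I would take $X:=\SStarstabft(D)$ with the Zariski topology; it is $T_0$ as a subspace of $\SStar(D)$, and by Proposition \ref{stableft}\ref{subbasis:stableft} the family $\mathcal{S}:=\{\widetilde{\texttt{U}}_J\mid J\in\insid_{\boldsymbol{f}}\}$ of quasi-compact open sets is a subbasis. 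Thus the whole problem reduces to realizing an arbitrary ultrafilter $\mathscr{U}$ on $X$ by a single stable operation of finite type $\star^\ast$, meaning that $1\in J^{\star^\ast}$ if and only if $\widetilde{\texttt{U}}_J\in\mathscr{U}$, for every $J\in\insid_{\boldsymbol{f}}$.

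To build $\star^\ast$ I would pass to $\spec(D)$, using that every element of $X$ is spectral of finite type, hence of the form $\mbox{\it\texttt{s}}_Y$ with $Y$ quasi-compact. For $J\in\insid_{\boldsymbol{f}}$ write $\texttt{D}(J):=\{P\in\spec(D)\mid J\nsubseteq P\}$, a quasi-compact open set, and record the elementary equivalence $\mbox{\it\texttt{s}}_Y\in\widetilde{\texttt{U}}_J$ if and only if $Y\subseteq\texttt{D}(J)$ (indeed $1\in J^{\mbox{\it\texttt{s}}_Y}$ means $JD_P=D_P$, i.e.\ $J\nsubseteq P$, for every $P\in Y$). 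Since $\texttt{D}(J_1)\cap\texttt{D}(J_2)=\texttt{D}(J_1J_2)$ and $\texttt{D}(J)\subseteq\texttt{D}(J')$ forces $\widetilde{\texttt{U}}_J\subseteq\widetilde{\texttt{U}}_{J'}$, this equivalence yields $\widetilde{\texttt{U}}_{J_1}\cap\widetilde{\texttt{U}}_{J_2}=\widetilde{\texttt{U}}_{J_1J_2}$ inside $X$; consequently the family $\mathcal{G}:=\{\texttt{D}(J)\mid\widetilde{\texttt{U}}_J\in\mathscr{U}\}$, being closed under finite intersection and upward closed, is a filter of quasi-compact open subsets of $\spec(D)$. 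I then set $Y^\ast:=\bigcap\mathcal{G}$ and $\star^\ast:=\mbox{\it\texttt{s}}_{Y^\ast}$. As an intersection of quasi-compact open sets, $Y^\ast$ is proconstructible, hence quasi-compact in the Zariski topology of $\spec(D)$; therefore $\star^\ast$ is of finite type by Proposition \ref{spettrali}(2) and, being spectral, is stable, so $\star^\ast\in\SStarstabft(D)$.

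It then remains to verify the realization condition. The implication $\widetilde{\texttt{U}}_J\in\mathscr{U}\Rightarrow 1\in J^{\star^\ast}$ is immediate, since then $\texttt{D}(J)\in\mathcal{G}$ gives $Y^\ast\subseteq\texttt{D}(J)$. For the converse, assume $1\in J^{\star^\ast}$, i.e.\ $Y^\ast=\bigcap\mathcal{G}\subseteq\texttt{D}(J)$, equivalently the Zariski-closed set $\spec(D)\setminus\texttt{D}(J)$ has empty intersection with the whole of $\mathcal{G}$. Passing to the constructible (patch) topology, in which $\spec(D)$ is compact and both $\texttt{D}(J)$ and its complement are clopen, this empty intersection of patch-closed sets produces, by compactness, finitely many members of $\mathcal{G}$ whose intersection already avoids $\spec(D)\setminus\texttt{D}(J)$; as $\mathcal{G}$ is directed, a single $\texttt{D}(J_0)\in\mathcal{G}$ then satisfies $\texttt{D}(J_0)\subseteq\texttt{D}(J)$. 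Hence $\widetilde{\texttt{U}}_{J_0}\subseteq\widetilde{\texttt{U}}_J$ with $\widetilde{\texttt{U}}_{J_0}\in\mathscr{U}$, so $\widetilde{\texttt{U}}_J\in\mathscr{U}$ by upward closure. With Finocchiaro's criterion, this completes the argument.

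I expect the converse implication in the realization step to be the main obstacle: the routine semistar-axiom checks are avoided entirely by parametrizing through $\spec(D)$, but converting the set-theoretic inclusion $Y^\ast\subseteq\texttt{D}(J)$ into the membership $\widetilde{\texttt{U}}_J\in\mathscr{U}$ genuinely requires the quasi-compactness of $\spec(D)$ in the constructible topology together with the filter property of $\mathcal{G}$; without the latter the finite subfamily produced by compactness could not be collapsed to one subbasic set $\texttt{D}(J_0)$, and the conclusion would fail.
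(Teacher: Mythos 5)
Your proof is correct, and it diverges from the paper's exactly where the real work happens. Both arguments run Finocchiaro's ultrafilter criterion \cite[Corollary 3.3]{Fi} on the same space with the same subbasis $\{\widetilde{\mbox{\rm\texttt{U}}}_J\mid J\in\insid_{\boldsymbol{f}}\}$ from Proposition \ref{stableft}(1), so in both cases everything reduces to realizing an arbitrary ultrafilter $\ms U$ by a point of $\SStarstabft(D)$. The paper does this intrinsically, inside the ordered set of semistar operations: it forms $\bigstar:=\bigvee\{\wedge_{\widetilde{\mbox{\rm\texttt{U}}}_J}\mid \widetilde{\mbox{\rm\texttt{U}}}_J\in\ms U\}$ and must then certify that this supremum is of finite type (via \cite[p. 1628]{an}), hence quasi-spectral (via \cite[Corollary 4.21]{fohu}), hence spectral (Lemma \ref{prop:infsup}(2)) --- a delicate chain, since the example following Lemma \ref{prop:infsup} shows that suprema of spectral operations need not be quasi-spectral in general. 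You work extrinsically in $\spec(D)$ instead: using that every element of $\SStarstabft(D)$ is spectral of finite type, you translate the subbasic sets through the dictionary $\mbox{\it\texttt{s}}_Y\in\widetilde{\mbox{\rm\texttt{U}}}_J\Leftrightarrow Y\subseteq\texttt{D}(J)$, take $Y^*$ to be the intersection of the filter $\{\texttt{D}(J)\mid\widetilde{\mbox{\rm\texttt{U}}}_J\in\ms U\}$, and let the patch-compactness of the already-known spectral space $\spec(D)$ do all the certifying: it yields quasi-compactness of $Y^*$ (so $\mbox{\it\texttt{s}}_{Y^*}$ is of finite type by Proposition \ref{spettrali}(2)) and, as you rightly emphasize, the converse realization implication. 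A posteriori the two constructions give the same operation --- by the proof of Proposition \ref{stableft}(1), $\mbox{\it\texttt{s}}_{\texttt{D}(J)}$ is the minimum of $\widetilde{\mbox{\rm\texttt{U}}}_J$, so $\wedge_{\widetilde{\mbox{\rm\texttt{U}}}_J}=\mbox{\it\texttt{s}}_{\texttt{D}(J)}$ and hence $\bigstar=\mbox{\it\texttt{s}}_{Y^*}$ --- but the certificates differ. Your route is more elementary and self-contained, trading Lemma \ref{prop:infsup}, the finite-typeness of suprema, and the quasi-spectrality input for standard facts about the constructible topology on $\spec(D)$. The paper's route, besides producing Lemma \ref{prop:infsup} as a statement of independent interest, stays entirely within $\SStar(D)$ and parallels the proof of \cite[Theorem 2.13]{FiSp}, which is what makes the refinement recorded in Remark \ref{oss:spectral-constr} (closedness of $\SStarstabft(D)$ in the constructible topology of $\SStarf(D)$) immediate.
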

\begin{proof} In order to prove that a topological space $X$ is a spectral space, we use the characterization given in \cite[Corollary 3.3]{Fi}. We recall that if $\mathscr{B}$ is a nonempty family of subsets of $X$, for a given subset $Y$ of $X$
and an ultrafilter $\ms{U}$ on $Y$, we set
$$
Y_\mathscr{B}(\ms{U}) := \{x \in X \mid\mbox{ for each } \ B \in \mathscr{B}, \mbox{ it happens that } \; x \in B \Leftrightarrow B \cap Y \in \ms{U}\} .
$$ 
The subset $Y$ of $X$ is called {\it $\mathscr{B}$-ultrafilter closed} if $Y_\mathscr{B}(\ms{U}) \subseteq Y$; the $\mathscr{B}$-ultrafilter closed subsets of $X$ are the closed subspaces of a topology on $X$ called the {\it $\mathscr{B}$-ultrafilter topology on} $X$.

By \cite[Corollary 3.3]{Fi},
for a topological space $X $ being a spectral space is equi\-va\-lent to $X$   being a $T_0$-space having a subbasis for the open sets $\mathscr{S}$ such that
$X_{\mathscr{S}}(\ms{U}) \neq \emptyset$, for each ultrafilter $\ms{U} $ on $X$.

We already know that $\boldsymbol{\mathcal{X}}:= \SStarstabft(D)$ is a $T_0$-space. 
By  Proposition \ref{stableft}\ref{subbasis:stableft}, the collection of sets
\begin{equation*}
\boldsymbol{\mathcal{\widetilde{T}}}:=\{\widetilde{\mbox{\rm\texttt{{U}}}}_J \mid J\subseteq D,J \mbox{ finitely generated ideal of }D \}
\end{equation*}
is a subbasis of the Zariski topology of $\boldsymbol{\mathcal{X}}$.
 Let $\ms U$ be any ultrafilter on $\boldsymbol{\mathcal{X}}$; the conclusion will follow if we prove that the set
\begin{equation*}
\boldsymbol{\mathcal{X}}_{\boldsymbol{\mathcal{\widetilde{T}}}}(\ms U):=\{\star\in \boldsymbol{\mathcal{X}} \mid
   [\mbox{ for each } 
   \widetilde{\mbox{\rm\texttt{{U}}}}_J
    \in \boldsymbol{\mathcal{\widetilde{T}}},
     \mbox{ it happens that } \star\in \widetilde{\mbox{\rm\texttt{{U}}}}_J
   \Leftrightarrow 
   \widetilde{\mbox{\rm\texttt{{U}}}}_J \in\ms U]   \}
\end{equation*}
is nonempty. 

Consider the semistar operation 
\begin{equation*}
\bigstar:=\bigvee \{ \wedge_{\widetilde{\mbox{\rm\Small\texttt{{U}}}}_J} \mid \widetilde{\mbox{\rm\texttt{{U}}}}_J \in\ms U\},
\end{equation*}
on $D$. By Lemma \ref{prop:infsup}(1),
each $\wedge_{\widetilde{\mbox{\rm\Small\texttt{{U}}}}_J}$ is spectral and, since $\widetilde{\mbox{\rm\texttt{{U}}}}_J$ is quasi-compact   (Proposition \ref{stableft}(1)), is also of finite type \cite[Proposition 2.7]{FiSp}.
 By applying \cite[p. 1628]{an}, it can be easily shown that $\bigstar$ is of finite type, and thus it is quasi-spectral \cite[Corollary 4.21]{fohu}.
 By Lemma \ref{prop:infsup}(2), it follows that $\bigstar$ is spectral, i.e., $\bigstar \in \boldsymbol{\mathcal{X}}$.

To prove that $\bigstar\in \boldsymbol{\mathcal{X}}_{\boldsymbol{\mathcal{\widetilde{T}}}}(\ms U)$, we apply the same argument used in proving \cite[Theorem 2.13]{FiSp}: let $\widetilde{\mbox{\rm\texttt{{U}}}}_J\in\ms{U}$. If $\bigstar\in\widetilde{\mbox{\rm\texttt{{U}}}}_J$, then by the definition of $\bigstar$ and \cite[p.1628]{an} (see also \cite[Lemma 2.12]{FiSp}) there are finitely generated ideals $F_1, F_2,\ldots,F_n$ of $D$ such that 
$$1\in J^{\wedge_{\widetilde{\mbox{\rm\Small\texttt{{U}}}}_{F_1}}\circ \wedge_{\widetilde{\mbox{\rm\Small\texttt{{U}}}}_{F_2}}\circ\cdots\circ\wedge_{\widetilde{\mbox{\rm\Small\texttt{{U}}}}_{F_n}}},
$$ 
with  $\widetilde{\mbox{\rm\texttt{{U}}}}_{F_i}\in\ms{U}$. 
Each semistar operation $\sigma\in\widetilde{\mbox{\rm\texttt{{U}}}}_{F_1}\cap \widetilde{\mbox{\rm\texttt{{U}}}}_{F_2}\cap\cdots \cap\widetilde{\mbox{\rm\texttt{{U}}}}_{F_n}$ is bigger than each each semistar operation of type $\wedge_{\widetilde{\mbox{\rm\Small\texttt{{U}}}}_{F_i}}$; it follows that $\widetilde{\mbox{\rm \texttt{{U}}}}_{F_1}\cap \widetilde{\mbox{\rm \texttt{{U}}}}_{F_2}\cap\cdots \cap \widetilde{\mbox{\rm \texttt{{U}}}}_{F_n}\subseteq\widetilde{\mbox{\rm\texttt{{U}}}}_J$, and thus the latter set is in $\ms{U}$. Conversely, if $\widetilde{\mbox{\rm \texttt{{U}}}}_J\in\ms{U}$, then $\bigstar\geq \wedge_{\widetilde{\mbox{\rm\Small\texttt{{U}}}}_J}$, and thus $1\in J^\bigstar$.  The proof is now complete. 
\end{proof}

 \begin{oss}\label{oss:spectral-constr}
The proof of Theorem \ref{stable spectral} actually shows more than just the fact that $ \SStarstabft(D)$ is a spectral space. Given a spectral space $X$, the \emph{constructible topology} on $X$ is the coarsest topology such that every open and quasi-compact subset of $X$ (in the original topology) is both open and closed.  By \cite{folo-2008}, the closed sets of the constructible topology in $X$ are the subsets $Y$ of $X$ such that, for every ultrafilter $\mathscr{U}$ of $Y$,
\begin{equation*}
Y_\mathscr{B}(\ms{U}) := \{x \in X \mid\mbox{ for each } B \in \mathscr{B}, \mbox{ it happens that } \; x \in B \Leftrightarrow B \cap Y \in \ms{U}\}\subseteq Y,
\end{equation*}
where $\mathscr{B}$ is the set of open and quasi-compact subspaces of $X$. Therefore, in view of the proof of \cite[Theorem 2.13]{FiSp},  what we have actually proved in Theorem \ref{stable spectral} is that, when $\SStarf(D)$ is endowed with the constructible topology, $\SStarstabft(D)$ is a closed subspace.
\end{oss}

From the previous theorem and Proposition \ref{stableft}\ref{locsist-stableft}, we deduce immediately the following: 

 \begin{cor} Let $D$ be an integral domain. The space $\locsistf(D)$ of the localizing systems of finite type on $D$, endowed with the Zariski topology induced by $\locsist(D)$, is a spectral space. 
 \end{cor}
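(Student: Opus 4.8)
The plan is to derive the Corollary as an immediate consequence of the two results already established. The statement to prove is that $\locsistf(D)$, endowed with the Zariski topology induced by $\locsist(D)$, is a spectral space. The key observation is that this space is homeomorphic to one we have already shown to be spectral.

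First, I would invoke Proposition \ref{stableft}\ref{locsist-stableft}, which states that the restriction $\boldsymbol{\lambda_f}:\locsistf(D)\rightarrow\SStarstabft(D)$ of the map $\boldsymbol{\lambda}$ establishes a homeomorphism between these spaces when both carry their respective Zariski topologies. Next, I would invoke Theorem \ref{stable spectral}, which asserts that $\SStarstabft(D)$, endowed with the Zariski topology induced by $\SStar(D)$, is a spectral space in the sense of Hochster.

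The conclusion then follows from a general and routine fact: being a spectral space is a topological invariant, so any space homeomorphic to a spectral space is itself spectral. Since $\boldsymbol{\lambda_f}$ is a homeomorphism carrying the Zariski topology on $\locsistf(D)$ to the Zariski topology on the spectral space $\SStarstabft(D)$, the space $\locsistf(D)$ is spectral as well. This completes the proof.

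I do not anticipate any genuine obstacle here, since all the substantive work has been carried out in establishing the homeomorphism (Proposition \ref{stableft}\ref{locsist-stableft}) and the spectrality of $\SStarstabft(D)$ (Theorem \ref{stable spectral}); the Corollary merely transports the latter property across the former homeomorphism. The only point requiring a small degree of care is to confirm that the topology referred to in the Corollary—the Zariski topology on $\locsistf(D)$ induced from $\locsist(D)$—is precisely the one for which $\boldsymbol{\lambda_f}$ was shown to be a homeomorphism; this is indeed the case, since the subbasic open sets $\mbox{\rm\texttt{W}}_I\cap\locsistf(D)$ of the induced topology correspond under $\boldsymbol{\lambda_f}$ exactly to the subbasic open sets $\widetilde{\mbox{\rm\texttt{U}}}_J$ of $\SStarstabft(D)$, as in the proof of Proposition \ref{locsist-stable}.
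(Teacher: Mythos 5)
Your proposal is correct and is exactly the paper's argument: the corollary is deduced immediately by transporting the spectrality of $\SStarstabft(D)$ (Theorem \ref{stable spectral}) across the homeomorphism $\boldsymbol{\lambda_f}$ of Proposition \ref{stableft}\ref{locsist-stableft}. Your added remark checking that the induced Zariski topology on $\locsistf(D)$ is the one for which $\boldsymbol{\lambda_f}$ is a homeomorphism is a sensible (if routine) precaution that the paper leaves implicit.
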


\section{The space of {\rm \texttt{eab}} semistar operations of finite type}\label{sect:eab}

A semistar operation $\star$ on an integral domain $D$ is said to be an \emph{\texttt{eab} semistar operation} [respectively, an \emph{\texttt{ab} semistar operation}] if, for every $F,G,H\in\f(D)$ [respectively, for every $F\in\f(D)$, $G,H\in\FF(D)$] the inclusion $(FG)^\star\subseteq(FH)^\star$ implies $G^\star\subseteq H^\star$. Note that, if $\star$ is \texttt{eab}, then $\stf$ is also \texttt{eab}, since $\star$ and $\stf$ agree on finitely generated fractional ideals. The concepts of \texttt{eab} and \texttt{ab} operations coincide on finite type operations, but not in general \cite{fo-lo-2009,fo-lo-ma}.

\begin{oss}
W. Krull only considered the concept of ``\texttt{a}rithmetisch
\texttt{b}rauchbar'' operation (for short \texttt{ab}-operation, as above) \cite{Krull:1936}. He did not consider the concept of ``\texttt{e}ndlich \texttt{a}rithmetisch \texttt{b}rauchbar'' operation (or, more simply, \texttt{eab}-operation as above), that instead stems from the original version of Gilmer's book \cite{gi-1968}.
\end{oss}

Let $\Zar(D):= \{ V\mid V \mbox{ is a valuation overring of } D \}$ be equipped with the {\it Zariski topology}, i.e., the topology having, as subbasic open subspaces, the subsets $\Zar(D[x])$ for $x$ varying in $K$.  The set  $\Zar(D)$, endowed with the Zariski topology, is often called the  {\it Riemann-Zariski space of }$D$  \cite[Chapter VI, \S 17, page 110]{zs}.   Recently, the use of Riemann-Zariski spaces had a strong impact on the study of algebraic properties of integrally closed domains. For a deeper insight on this topic see, for example, \cite{ol,ol1,olb2015,olb_noeth}.

 A \emph{valuative semistar operation} is a semistar operation  of the type $\wedge_{Y}$, where $Y \subseteq \Zar(D)$; it is easy to see that it  is an \texttt{eab} semistar operation. In particular, the $\mbox{\it\texttt{b}}$-operation, where $\mbox{\it\texttt{b}} := 
\wedge_{\Zar(D)}$, is an \texttt{eab} semistar operation of finite type on $D$, since $\Zar(D)$ is quasi-compact \cite[Proposition 4.5]{FiSp}. More generally, for the same reason, for each overring $T$ of $D$, the valuative semistar operation $\mbox{\it\texttt{b}}(T) := \wedge_{\Zar(T)}$ is an \texttt{eab} semistar operation of finite type on $D$.

Just like in the case of the relation between stable and spectral operations, not every \texttt{eab} semistar operation is valutative, but the two definitions agree on finite type operations (see, for instance, \cite[Corollary 5.2]{folo-2001}). However, unlike the spectral case, there are example of quasi-spectral  \texttt{eab} operations that are not valutative \cite[Example 15]{fo-lo-2009}.

Denote by $\SStarval(D)$ [respectively, $\SStareab(D)$;   $\SStareabf(D)$] the set of valutative [respectively, \texttt{eab};  \texttt{eab} of finite type] semistar operations on $D$, endowed with the Zariski topology induced from $\SStar(D)$. By the previous remarks, we have:
$$
\SStareabf(D) :=\SStareab(D) \cap\SStarf(D) = \SStarval(D) \cap\SStarf(D)\,.
$$

To every semistar operation $\star\in\SStar(D)$ we can associate a map $\sta: \FF(D) \rightarrow \FF(D)$ defined by 
\begin{equation*}
F^{\sta}:=\bigcup\{((FG)^\star:G^\star)\mid G\in\f(D)\}
\end{equation*}
for every $F\in\f(D)$, and then extended to arbitrary modules $E\in \FF(D)$ by setting $E^{\sta} :=\bigcup\{F^{\sta}\mid F\subseteq E, \ F\in\f(D)\}$. 
The map $\sta$ is always an \texttt{eab} semistar operation of finite type on $D$ \cite[Proposition 4.5(1, 2)]{folo-2001}. Moreover, $\star= \sta$ if and only if $\star$ is an \texttt{eab} semistar operation of finite type, called {\it the \texttt{eab} semistar operation of finite type associated to $\star$} and, if $\star$ is an \texttt{eab} semistar operation, then $\sta =\stf$ \cite[Proposition 4.5(4)]{folo-2001}. The following proposition is an analogue of Propositions \ref{stable:subret}\ref{retraction:stable} and \ref{stableft}\ref{retraction:stableft}.

\begin{prop}\label{phia}
Let $D$ be an integral domain and let $\Phi_{\mbox{\it\tiny\mathttt{a}}}:\SStar(D)\rightarrow \SStareabf(D)$ be the map defined by $\Phi_{\mbox{\it\tiny\mathttt{a}}}(\star) :=\sta$, for each $\star\in \SStar(D)$. Then, $\Phi_{\mbox{\it\tiny\mathttt{a}}}$ is topological retraction of $\SStar(D)$ onto $\SStareabf(D)$.
\end{prop}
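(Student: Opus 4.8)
The plan is to verify the two conditions that make $\Phi_{\mbox{\it\tiny\mathttt{a}}}$ a topological retraction: that it is continuous, and that it restricts to the identity on $\SStareabf(D)$. The second condition I would dispatch immediately from the characterizations recalled just before the statement: if $\star\in\SStareabf(D)$, then $\star$ is an \texttt{eab} semistar operation of finite type, and hence $\sta=\star$ by \cite[Proposition 4.5(4)]{folo-2001}. Thus $\Phi_{\mbox{\it\tiny\mathttt{a}}}$ fixes $\SStareabf(D)$ pointwise, and since $\SStareabf(D)$ carries the subspace topology induced from $\SStar(D)$, the whole content of the statement reduces to proving that $\Phi_{\mbox{\it\tiny\mathttt{a}}}$ is continuous.

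For continuity it suffices to check that the preimage of each subbasic open set $\texttt{V}_E\cap\SStareabf(D)$ is open in $\SStar(D)$. First I would observe that
\[
\Phi_{\mbox{\it\tiny\mathttt{a}}}^{-1}(\texttt{V}_E\cap\SStareabf(D))=\{\star\in\SStar(D)\mid 1\in E^{\sta}\}.
\]
Because $\sta$ is always of finite type, one has $E^{\sta}=\bigcup\{F^{\sta}\mid F\subseteq E,\ F\in\f(D)\}$, so $1\in E^{\sta}$ holds exactly when $1\in F^{\sta}$ for some finitely generated $F\subseteq E$. Hence the preimage is the union, over all such $F$, of the sets $\{\star\mid 1\in F^{\sta}\}$, and it is enough to prove that each of these is open.

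The heart of the argument is then a rewriting of the condition $1\in F^{\sta}$ for a fixed $F\in\f(D)$. By the definition of $\sta$ on finitely generated ideals, $1\in F^{\sta}$ means that $1\in((FG)^\star:G^\star)$, i.e. $G^\star\subseteq(FG)^\star$, for some $G\in\f(D)$; and since $G\subseteq G^\star$ and $(FG)^\star$ is $\star$-closed (by $(\star_2)$ and $(\star_4)$), this is equivalent to $G\subseteq(FG)^\star$. Writing $g_1,\dots,g_k$ for a finite generating set of $G$ and using $(\star_1)$ to pass from $g_j\in(FG)^\star$ to $1\in(g_j^{-1}FG)^\star$, I would obtain
\[
\{\star\mid 1\in F^{\sta}\}=\bigcup_{G\in\f(D)}\ \bigcap_{j=1}^{k}\texttt{V}_{g_j^{-1}FG},
\]
where $g_1,\dots,g_k$ (with $k=k(G)$) generate $G$. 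For each fixed $G$ the inner set is a finite intersection of subbasic open sets of $\SStar(D)$, hence open, and the union of these is therefore open. This yields continuity and completes the verification that $\Phi_{\mbox{\it\tiny\mathttt{a}}}$ is a topological retraction.

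The step I expect to be the crux, and where this proof genuinely departs from the stable analogues in Propositions \ref{stable:subret} and \ref{stableft}, is precisely the continuity computation. There the preimage of a subbasic open set is again a \emph{single} subbasic open set; here, by contrast, $\sta$ may be strictly larger than $\stf$, so $\Phi_{\mbox{\it\tiny\mathttt{a}}}^{-1}(\texttt{V}_F\cap\SStareabf(D))$ is genuinely a union and cannot equal $\texttt{V}_F$. The decisive observation that rescues openness is that the auxiliary ideal $G$ in the definition of $\sta$ ranges over a set, and for each \emph{fixed} choice of $G$ the resulting membership condition reduces to the finite intersection of the conditions $1\in(g_j^{-1}FG)^\star$, each defining a subbasic open set; expressing the preimage as the union over $G$ of these finite intersections is exactly what makes it open.
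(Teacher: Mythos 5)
Your proof is correct and takes essentially the same route as the paper's: the paper likewise rewrites $1\in H^{\sta}$ as the existence of $F\in\f(D)$ with $F^\star\subseteq(HF)^\star$, reduces this via generators to membership conditions, and expresses the preimage as $\bigcup\left\{\bigcap_{i=1}^n \texttt{V}_{x_i^{-1}HF} \mid F\in\f(D)\right\}$, with your $G$ and $g_j$ playing exactly the roles of its $F$ and $x_i$, and it settles the retraction property by the same identity $\sta=\star$ for $\star$ \texttt{eab} of finite type. The only (harmless) difference is that you explicitly justify, via the finite-type property of $\sta$, the reduction from an arbitrary subbasic set $\texttt{V}_E$ to finitely generated ones, whereas the paper works with finitely generated $H$ from the outset, leaving that reduction implicit.
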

\begin{proof}
We start by showing that $\Phi_{\mbox{\it\tiny\mathttt{a}}}$ is continuous. Indeed, if $H$ is a nonzero finitely generated fractional ideal of $D$,
\begin{equation*}
\begin{array}{rl}
\Phi_{\mbox{\it\tiny\mathttt{a}}}^{-1}(\texttt{V}_H\cap \SStareabf(D))= & 
\hskip -6pt \{\star \in \SStar(D) \mid 1\in H^{\sta}\}\\
= & \hskip -6pt \{\star \in \SStar(D)\mid F^\star\subseteq(HF)^\star\mbox{ for some } F\in \f(D)\} \\
= & \hskip -6pt \bigcup \left\{\{\star \in \SStar(D) \mid F^\star\subseteq(HF)^\star\} \mid F\in \f(D) \right\}.
\end{array}
\end{equation*}

Let $F:=x_1D+x_2D+\cdots+x_nD$. Then, $F^\star\subseteq(HF)^\star$ is and only if $x_i\in(HF)^\star$ for $i=1,\ldots,n$, hence, 
\begin{equation*}
\Phi_{\mbox{\it\tiny\mathttt{a}}}^{-1}(\texttt{V}_H\cap\SStareabf(D)) =
\bigcup\left\{\bigcap_{i=1}^n \texttt{V}_{x_i^{-1}HF} \mid F \in \f(D) \right\},
\end{equation*}
which is an open set of $\SStar(D)$. Hence, $\Phi_{\mbox{\it\tiny\mathttt{a}}}$ is continuous.
Moreover, if $\star$ is an \texttt{eab} operation of finite type, then $\sta=(\star_{\mbox{\it\tiny\mathttt{a}}})_{\mbox{\it\tiny\mathttt{a}}}$.
Henceforth, $\Phi_{\mbox{\it\tiny\mathttt{a}}}$ is a topological retraction.
\end{proof}

We are now interested to what happens to the topological embedding $\iota$, defined in Proposition \ref{immersione}, when restricted to subsets of integrally closed overrings. 
We start with a remark.  

\begin{oss}\label{rem:stara}
Let $T$ be an overring of $D$, and let $\star_T$ be a semistar operation on $T$. Then, we can define a semistar operation $\star$ on $D$ by $\star:=\star_T\circ\wedge_{\{T\}}$, i.e., $E^\star:=(ET)^{\star_T}$ for every $E\in\FF(D)$. If now $F\in\f(T)$, then
$$
\begin{array}{rl}
F^{\sta} =& \hskip -6pt \bigcup \{((FG)^\star:G^\star) \mid G\in\f(D) \} =
   \bigcup \{((FGT)^{\star_T}:(GT)^{\star_T}) \mid G\in\f(D)\}=\\
=& \hskip -6pt \bigcup\{ ((FTH)^{\star_T}:H^{\star_T}) \mid H\in\f(T)\}=(FT)^{(\star_T)_{\mbox{\it\tiny\texttt{a}}}}=F^{(\star_T)_a}.
\end{array}
$$
Hence, for every $E\in\FF(D)$, $E^{\sta}=(ET)^{(\star_T)_a}$, that is, $\sta=(\star_T)_{\mbox{\it\tiny\texttt{a}}}\circ\wedge_{\{T\}}$ (where $(\star_T)_{\mbox{\it\tiny\texttt{a}}}$ is the \texttt{eab} semistar operation of finite type on $T$ associated to $\star_T$).
\end{oss}

 Olberding in \cite{ol, olb_noeth}, considered a new topology (called the {\it $\texttt{b}$-topology}) on the set $\overric(D)$ of integrally closed overrings of $D$ by taking, as a subbasis of open sets, the sets of the form 
 $$
 \mathscr{U}_{\mbox{\it\tiny\texttt{ic}}}(F, G):=\{T\in\overric(D)\mid F\subseteq G^{\mbox{\it\tiny \texttt{b}}(T)}\} ,
 $$
 where $F$ and $G$ range among the nonzero finitely generated $D$-submodules of $K$. He showed that the $\mbox{\it \texttt{b}}$-topology on $\overric(D)$ is finer than (or equal to) the Zariski topology (since it is straightforward that  $\texttt{B}_F =\mathscr{U}_{\mbox{\it\tiny\texttt{ic}}}(F,D)$, where $\texttt{B}_F:= \overric(D[F]) = \overric(D[x_1, x_2, \dots, x_n])$,  for each $F = x_1D+ x_2D+ \dots+x_n D\in \f(D)$, is subbasic open set of $\overric(D)$ with the  topology induced by the Zariski topology of $\overr(D)$) and the two topologies coincide when restricted to the Riemann-Zariski space $\Zar(D)$ \cite[Corollary 2.8]{ol}. 
 Using semistar operations, we can show more, i.e, the $\mbox{\it \texttt{b}}$-topology and the Zariski topology coincide on $\overric(D)$ (Corollary \ref{b-zar}).
 
\begin{prop} \label{overric}
Let $D$ be an integral domain, and consider the injective map
$\iota_{\texttt{ic,a}}: \overric(D) \rightarrow \SStarf(D)$ defined by $\iota_{\texttt{ic,a}}(T) := \texttt{b}(T)$, for each $T \in \overric(D) $. Assume that $\SStarf(D)$ is endowed with the Zariski topology. Then:
\begin{enumerate}[\rm(1)]
\item If $\overric(D)$ is endowed with the Zariski topology, then $\iota_{\texttt{ic,a}}$ is continuous and injective.
\item If $\overric(D)$ is endowed with the ${\mbox{\it \texttt{b}}}$-topology, then $\iota_{\texttt{ic,a}}$ is topological embedding, i.e., it estabilishes a homeomorphism between $\overric(D)$ and its image.
\end{enumerate}
\end{prop}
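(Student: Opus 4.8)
The plan is to establish injectivity once and for all, and then to split the topological content into the two statements, relying throughout on the identification of $\texttt{b}(T)$ with an \texttt{eab} operation associated to $\wedge_{\{T\}}$. For injectivity, I would first record that an integrally closed overring $T$ is the intersection of its valuation overrings, so that $D^{\texttt{b}(T)}=\bigcap\{DV\mid V\in\Zar(T)\}=\bigcap\{V\mid V\in\Zar(T)\}=T$. Writing $\pi(\star):=D^\star$ for the retraction of Proposition~\ref{immersione}, this reads $\pi\circ\iota_{\texttt{ic,a}}=\ude_{\overric(D)}$; hence $\iota_{\texttt{ic,a}}$ is injective and, on its image, its inverse is the restriction of $\pi$. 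This serves both parts.

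For part (1) I would avoid a direct computation by recognizing $\iota_{\texttt{ic,a}}$ as a composite of two maps already known to be continuous. Applying Remark~\ref{rem:stara} with $\star_T$ the identity operation on $T$ (whose associated \texttt{eab} operation of finite type is the $b$-operation $\wedge_{\Zar(T)}$ of $T$) gives $\Phi_{\mbox{\it\tiny\mathttt{a}}}(\wedge_{\{T\}})=\texttt{b}(T)$, since both send $E\in\FF(D)$ to $\bigcap\{EV\mid V\in\Zar(T)\}$. Therefore $\iota_{\texttt{ic,a}}=\Phi_{\mbox{\it\tiny\mathttt{a}}}\circ(\iota|_{\overric(D)})$. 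As $\iota$ is a topological embedding (Proposition~\ref{immersione}(1)) and $\Phi_{\mbox{\it\tiny\mathttt{a}}}$ is a continuous retraction (Proposition~\ref{phia}), the composite is continuous when $\overric(D)$ carries the Zariski topology; together with injectivity this is part (1).

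For part (2), continuity with respect to the $\texttt{b}$-topology is automatic, since that topology is finer than the Zariski one; concretely, using that $\texttt{b}(T)$ is of finite type one checks $\iota_{\texttt{ic,a}}^{-1}(\texttt{V}_E)=\bigcup\{\mathscr{U}_{\mbox{\it\tiny\texttt{ic}}}(D,F)\mid F\in\f(D),\ F\subseteq E\}$, a union of subbasic $\texttt{b}$-open sets. The remaining and essential point is that $\iota_{\texttt{ic,a}}$ is open onto its image. Given a subbasic $\texttt{b}$-open set $\mathscr{U}_{\mbox{\it\tiny\texttt{ic}}}(F,G)$ with $F=y_1D+\cdots+y_mD$, I would use property $(\star_1)$ to rewrite the defining condition: $F\subseteq G^{\texttt{b}(T)}$ iff $y_j\in G^{\texttt{b}(T)}$ for every $j$ iff $1\in(y_j^{-1}G)^{\texttt{b}(T)}$ for every $j$. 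Since $\iota_{\texttt{ic,a}}$ is a bijection onto its image, this yields
\[
\iota_{\texttt{ic,a}}(\mathscr{U}_{\mbox{\it\tiny\texttt{ic}}}(F,G))=\Ima(\iota_{\texttt{ic,a}})\cap\bigcap_{j=1}^m\texttt{V}_{y_j^{-1}G},
\]
which is open in the subspace $\Ima(\iota_{\texttt{ic,a}})$. Injectivity, continuity and openness onto the image then give that $\iota_{\texttt{ic,a}}$ is a homeomorphism onto its image.

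The step I expect to be the crux is the openness computation in part (2): the subbasis $\mathscr{U}_{\mbox{\it\tiny\texttt{ic}}}(F,G)$ of the $\texttt{b}$-topology is tailored so that, under $\iota_{\texttt{ic,a}}$, these sets become finite intersections of the basic sets $\texttt{V}_{y_j^{-1}G}$, and making this correspondence precise (in particular checking that each $y_j^{-1}G$ is again a nonzero finitely generated $D$-submodule of $K$, so that $\texttt{V}_{y_j^{-1}G}$ is a legitimate subbasic open) is where the argument has genuine content. By contrast, the identification $\iota_{\texttt{ic,a}}=\Phi_{\mbox{\it\tiny\mathttt{a}}}\circ\iota$ is what makes part (1) essentially free; without it one would have to prove by hand that each $\mathscr{U}_{\mbox{\it\tiny\texttt{ic}}}(D,F)$ is Zariski-open, which ultimately rests on the finite ``algebraic certificate'' that the identity $1\in(x_1,\dots,x_n)T[x_1,\dots,x_n]$ involves only finitely many elements of $T$.
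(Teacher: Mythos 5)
Your proposal is correct and follows essentially the same route as the paper: part (1) via the factorization $\iota_{\texttt{ic,a}}=\Phi_{\mbox{\it\tiny\mathttt{a}}}\circ\iota$ (using Remark \ref{rem:stara} to get $(\wedge_{\{T\}})_{\mbox{\it\tiny \texttt{a}}}=\mbox{\it \texttt{b}}(T)$), and part (2) by computing that the image of a subbasic $\mbox{\it \texttt{b}}$-open set $\mathscr{U}_{\mbox{\it\tiny\texttt{ic}}}(F,G)$ is the trace of $\bigcap_j\texttt{V}_{y_j^{-1}G}$ on the image, exactly as in the paper (which merely reduces first to $F=xD$). Your injectivity argument via $\pi\circ\iota_{\texttt{ic,a}}=\ude$ is a minor variant of the paper's ($T'\neq T''\Rightarrow\Zar(T')\neq\Zar(T'')\Rightarrow \mbox{\it \texttt{b}}(T')\neq\mbox{\it \texttt{b}}(T'')$), both resting on the fact that an integrally closed domain is the intersection of its valuation overrings.
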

\begin{proof}
(1) Let $ \Psi_{\mbox{\it \tiny\texttt{a}}} := \Phi_{\mbox{\it \tiny\texttt{a}}}|_{\SStarf(D)}: \SStarf(D) \rightarrow \SStareabf(D)$,
 defined by $\Psi_{\mbox{\it \tiny\texttt{a}}}(\star) = \sta$, for each $\star \in \SStarf(D)$ (Lemma \ref{phia}). 
 Note that $\iota_{\mbox{\it \tiny\texttt{ic,a}}}=
 \Psi_{\mbox{\it \tiny\texttt{a}}} \circ \iota$,   since by  Remark \ref{rem:stara} we have  $\left(\wedge_{\{T\}}\right)_{\mbox{\it \tiny\texttt{a}}} 
 = \mbox{\it \texttt{b}}(T)$ for each overring $T$ of $D$ (see also the proof of Proposition \ref{iotaT}).
Therefore, $\iota_{\mbox{\it \tiny\texttt{ic,a}}}$
 is continuous as a composition of continuous maps.
 Moreover, if $T', T'' \in \overric(D)$ and $T' \neq T''$, then $\Zar(T') \neq \Zar(T'')$ and so 
 $\mbox{\it \texttt{b}}(T')\neq \mbox{\it \texttt{b}}(T'')$,
 i.e., $\iota_{\mbox{\it \tiny\texttt{ic,a}}}$ is injective.

(2)   Since the $\mbox{\it \texttt{b}}$-topology is finer than the Zariski topology, $\iota_{\mbox{\it \tiny\texttt{ic,a}}}$ is continuous by the previous part of the proof.  We set 
$$
\SStarb(D) := \{ \mbox{\it \texttt{b}}(T) \mid T \in\overric(D)\} = \iota_{\mbox{\it \tiny\texttt{ic,a}}}(\overric(D)) .
$$
 Let $\mathscr{U}_{\mbox{\it\tiny\texttt{ic}}}(F,G)$ be a subbasic open set of $\overric(D)$ in the ${\mbox{\it \texttt{b}}}$-topology.
 If $F:=x_1D+x_2D+\cdots+x_nD$, then $\mathscr{U}_{\mbox{\it\tiny\texttt{ic}}}(F,G)=\mathscr{U}_{\mbox{\it\tiny\texttt{ic}}}(x_1,G)\cap \mathscr{U}_{\mbox{\it\tiny\texttt{ic}}}(x_2,G)\cap\cdots\cap \mathscr{U}_{\mbox{\it\tiny\texttt{ic}}}(x_n,G)$, so we can suppose that $F=xD$ for some $0\neq x \in K$. 
In this situation we have:
$$
\iota_{\mbox{\it \tiny\texttt{ic,a}}}(\mathscr{U}_{\mbox{\it\tiny\texttt{ic}}}(xD,G))=\{\star \in \SStarb(D)\mid x\in G^\star\}=\texttt{V}_{x^{-1}G}\cap \SStarb(D) .
$$
Therefore, $\iota_{\mbox{\it \tiny\texttt{ic,a}}}$ is open onto $\SStarb(D)$ and hence it establishes a homeomorphism between $\overric(D)$ (with the ${\mbox{\it \texttt{b}}}$-topology) and $\SStarb(D)$ (with the Zariski topology). 
\end{proof}

\begin{cor} \label{b-zar}
The $\mbox{\it \texttt{b}}$-topology and the Zariski topology coincide on $\overric(D)$.
\end{cor}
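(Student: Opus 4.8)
The plan is to deduce Corollary \ref{b-zar} directly from Proposition \ref{overric} by comparing the two parts of that proposition. By part (2), the map $\iota_{\mbox{\it \tiny\texttt{ic,a}}}$ is a homeomorphism from $\overric(D)$ equipped with the $\mbox{\it \texttt{b}}$-topology onto its image $\SStarb(D)$ with the Zariski topology induced from $\SStarf(D)$. So the $\mbox{\it \texttt{b}}$-topology on $\overric(D)$ is precisely the topology pulled back from $\SStarb(D)$ along $\iota_{\mbox{\it \tiny\texttt{ic,a}}}$. The idea is then to show that the \emph{same} pulled-back topology is the Zariski topology on $\overric(D)$; since the map (as a set map) does not depend on which topology we put on the source, this forces the two source topologies to agree.

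The key step is to analyze what the Zariski topology on $\SStarb(D)$ pulls back to under $\iota_{\mbox{\it \tiny\texttt{ic,a}}}$ when the source carries the \emph{Zariski} topology. First I would note that a subbasis for the Zariski topology on $\SStarb(D)$ is given by the traces $\texttt{V}_{H}\cap\SStarb(D)$ for $H$ a nonzero finitely generated $D$-submodule of $K$, and by the computation already performed in the proof of Proposition \ref{overric}(2), writing $H = x^{-1}G$ these traces equal $\iota_{\mbox{\it \tiny\texttt{ic,a}}}(\mathscr{U}_{\mbox{\it\tiny\texttt{ic}}}(xD,G))$. Thus the preimage $\iota_{\mbox{\it \tiny\texttt{ic,a}}}^{-1}(\texttt{V}_{x^{-1}G}\cap\SStarb(D))$ is exactly the subbasic $\mbox{\it \texttt{b}}$-open set $\mathscr{U}_{\mbox{\it\tiny\texttt{ic}}}(xD,G)$. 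On the other hand, part (1) of Proposition \ref{overric} tells us $\iota_{\mbox{\it \tiny\texttt{ic,a}}}$ is continuous when the source carries the Zariski topology, so every such preimage is \emph{also} Zariski-open in $\overric(D)$; conversely, taking $G = D$ in $\mathscr{U}_{\mbox{\it\tiny\texttt{ic}}}(xD,D) = \texttt{B}_{x^{-1}D}$ recovers the Zariski subbasic sets, which are $\mbox{\it \texttt{b}}$-open since the $\mbox{\it \texttt{b}}$-topology is finer.

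The cleanest way to finish is as follows: Proposition \ref{overric}(2) exhibits the $\mbox{\it \texttt{b}}$-topology as the initial topology on $\overric(D)$ induced by the injection $\iota_{\mbox{\it \tiny\texttt{ic,a}}}$ into $(\SStarf(D), \text{Zariski})$, since a homeomorphism onto its image realizes the source as a subspace. But the initial topology induced by a map into a fixed topological space depends only on the map as a function of sets, not on any topology on the domain. Since $\iota_{\mbox{\it \tiny\texttt{ic,a}}}$ is one and the same function in both parts of Proposition \ref{overric}, the initial topology it induces is a single well-defined topology $\tau$ on $\overric(D)$. Part (2) identifies $\tau$ with the $\mbox{\it \texttt{b}}$-topology. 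It remains to identify $\tau$ with the Zariski topology, which follows because, as noted, $\iota_{\mbox{\it \tiny\texttt{ic,a}}}$ is continuous from the Zariski topology (part (1)) while the Zariski subbasic sets $\texttt{B}_{F}=\mathscr{U}_{\mbox{\it\tiny\texttt{ic}}}(F,D)$ are preimages of Zariski-opens of $\SStarb(D)$, hence belong to $\tau$; these two facts sandwich the Zariski topology against $\tau$, giving $\tau = \text{Zariski}$. Therefore the $\mbox{\it \texttt{b}}$-topology and the Zariski topology on $\overric(D)$ coincide.

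I do not expect a serious obstacle here, as Corollary \ref{b-zar} is essentially a formal consequence of the two parts of Proposition \ref{overric} having been set up precisely to compare these two topologies on the \emph{same} underlying set. The only point requiring mild care is to ensure that the subbasis $\{\texttt{V}_{H}\cap\SStarb(D)\}$ for the Zariski topology on $\SStarb(D)$ is indexed by exactly the submodules $x^{-1}G$ arising in the computation, so that the preimages genuinely exhaust the $\mbox{\it \texttt{b}}$-subbasis; this is immediate since $x$ ranges over $K\setminus\{0\}$ and $G$ over nonzero finitely generated submodules, covering all relevant $H$.
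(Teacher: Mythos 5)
Your proof is correct and takes essentially the same route as the paper: both arguments come down to the identity $\iota_{\mbox{\it \tiny\texttt{ic,a}}}^{-1}(\texttt{V}_{x^{-1}G}) = \mathscr{U}_{\mbox{\it\tiny\texttt{ic}}}(xD,G)$ together with the continuity of $\iota_{\mbox{\it \tiny\texttt{ic,a}}}$ from the Zariski topology (Proposition \ref{overric}(1)), your initial-topology packaging being only a reformulation of this two-containment argument. One cosmetic slip: $\mathscr{U}_{\mbox{\it\tiny\texttt{ic}}}(xD,D)$ equals $\texttt{B}_{xD}=\overric(D[x])$, not $\texttt{B}_{x^{-1}D}$, but this does not affect the argument.
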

\begin{proof} With the notation of the proof of Proposition \ref{overric}(2),
it is enough to observe that $\mathscr{U}_{\mbox{\it\tiny\texttt{ic}}}(xD,G)$ is also open in the Zariski topology of $\overric(D)$,
 since $\iota_{\mbox{\it \tiny\texttt{ic,a}}}^{-1}(\texttt{V}_{x^{-1}G}) = \mathscr{U}_{\mbox{\it\tiny\texttt{ic}}}(xD,G)$.
\end{proof}

The following can be seen as a companion of Proposition \ref{immersione:stab}.
\begin{prop}\label{iotaT}
Let $D$ be an integral domain and let $T \in \overr(D)$. The following properties are equivalent:
\begin{enumerate}[\rm(i)]
\item $\wedge_{\{T\}} \in \SStareabf(D)$;
\item $\iota(T)=\iota_{\texttt{ic,a}}(T)$;
\item $T$ is a Pr\"ufer domain.
\end{enumerate}
\end{prop}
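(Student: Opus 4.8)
The plan is to pivot everything through the identity $(\wedge_{\{T\}})_{\mbox{\it\tiny\texttt{a}}}=\mbox{\it \texttt{b}}(T)$, valid for every overring $T$ of $D$ by Remark \ref{rem:stara} (and already recorded in the proof of Proposition \ref{overric}). Recall that $\wedge_{\{T\}}$ is always of finite type, that $\iota(T)=\wedge_{\{T\}}$ and $\iota_{\texttt{ic,a}}(T)=\mbox{\it \texttt{b}}(T)$, and that a finite type semistar operation $\star$ is \texttt{eab} if and only if $\star=\sta$. Putting these facts together, (i) and (ii) are merely two phrasings of the same condition: $\wedge_{\{T\}}\in\SStareabf(D)$ says exactly that $\wedge_{\{T\}}$ coincides with its associated \texttt{eab} operation $(\wedge_{\{T\}})_{\mbox{\it\tiny\texttt{a}}}=\mbox{\it \texttt{b}}(T)$, which is precisely $\iota(T)=\iota_{\texttt{ic,a}}(T)$. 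So I would dispose of (i) $\Leftrightarrow$ (ii) in one line and then concentrate on linking these to (iii).

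For (iii) $\Rightarrow$ (i), I would use that over a Pr\"ufer domain every nonzero finitely generated (fractional) ideal is invertible. Given $F,G,H\in\f(D)$ with $(FG)^{\wedge_{\{T\}}}\subseteq(FH)^{\wedge_{\{T\}}}$, i.e. $FGT\subseteq FHT$, I multiply by the inverse of the invertible ideal $FT$ of $T$ to obtain $GT\subseteq HT$, that is $G^{\wedge_{\{T\}}}\subseteq H^{\wedge_{\{T\}}}$; hence $\wedge_{\{T\}}$ is \texttt{eab}. For the converse (i) $\Rightarrow$ (iii), I would read the \texttt{eab} condition for $\wedge_{\{T\}}$ back inside $T$: since $(FT)(GT)=FGT$ and every nonzero finitely generated (fractional) ideal of $T$ has the form $FT$ with $F\in\f(D)$ (take $F=\sum_i x_iD$ when the ideal is generated by the $x_i\in K$), the \texttt{eab} implications $FGT\subseteq FHT\Rightarrow GT\subseteq HT$ say exactly that finitely generated ideals of $T$ can be cancelled from inclusions. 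By the classical characterization of Pr\"ufer domains through cancellation (equivalently, invertibility) of finitely generated ideals (see, e.g., \cite{gi}), $T$ is a Pr\"ufer domain.

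The step I expect to require the most care is this last one, where the purely formal \texttt{eab}/cancellation property must be matched with the ring-theoretic Pr\"ufer condition, and one must check that the cancellation law really transfers from $\f(D)$ to the finitely generated ideals of $T$; note that (ii) applied to $E=D$ already gives $T=\bigcap\{V\mid V\in\Zar(T)\}$, so $T$ is automatically integrally closed, which is reassuring. An equivalent and perhaps more transparent route for (ii) $\Leftrightarrow$ (iii) is to observe that $E^{\mbox{\it \texttt{b}}(T)}=\bigcap\{EV\mid V\in\Zar(T)\}=\bigcap\{(ET)V\mid V\in\Zar(T)\}$ depends only on $ET$; thus (ii) amounts to saying that every nonzero $T$-submodule $M$ of $K$ satisfies $M=\bigcap\{MV\mid V\in\Zar(T)\}$, i.e. that the \texttt{b}-operation of $T$ is the identity, which again holds if and only if $T$ is Pr\"ufer. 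In this language the implication (iii) $\Rightarrow$ (ii) is immediate from $M=\bigcap\{MT_{\mathfrak m}\mid \mathfrak m\in\Max(T)\}$ together with $T_{\mathfrak m}\in\Zar(T)$ for Pr\"ufer $T$.
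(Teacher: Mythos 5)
Your proposal is correct, and its first half is exactly the paper's argument: the paper also settles (i) $\Leftrightarrow$ (ii) through the identity $(\wedge_{\{T\}})_{\mbox{\it\tiny\texttt{a}}}=(\mbox{\it\texttt{d}}_T)_{\mbox{\it\tiny\texttt{a}}}\circ\wedge_{\{T\}}=\mbox{\it\texttt{b}}_T\circ\wedge_{\{T\}}=\mbox{\it\texttt{b}}(T)$ coming from Remark \ref{rem:stara}, together with the fact that a finite type operation is \texttt{eab} precisely when it equals its associated \texttt{eab} operation. Where you genuinely diverge is the link with (iii). The paper connects (iii) to (ii): the implication (iii) $\Rightarrow$ (ii) is declared obvious, and (ii) $\Rightarrow$ (iii) is outsourced to \cite[Lemma 2]{fopi}, i.e.\ to the fact that $\mbox{\it\texttt{d}}_T=\mbox{\it\texttt{b}}_T$ holds exactly when $T$ is Pr\"ufer --- which is precisely the fact you invoke, without attribution, in the alternative route of your last paragraph. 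Your main route instead ties (iii) directly to (i) by ideal arithmetic: invertibility of finitely generated ideals of a Pr\"ufer domain gives the \texttt{eab} property, and conversely the \texttt{eab} property of $\wedge_{\{T\}}$, read (via your correct and necessary observation that every nonzero finitely generated fractional ideal of $T$ has the form $FT$ with $F\in\f(D)$) as the cancellation law $AB\subseteq AC\Rightarrow B\subseteq C$ among finitely generated fractional ideals of $T$, forces $T$ to be Pr\"ufer by the classical characterization in \cite{gi}. This buys a more elementary, self-contained treatment that replaces the Fontana--Picozza lemma by Gilmer's cancellation theorem and makes transparent that the proposition is that theorem in semistar language; the paper's route is shorter given the citation and stays inside the valuation-theoretic framework of the section. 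One piece of fine print on the step you flagged: Gilmer's notion of cancellation ideal quantifies over arbitrary ideals $B,C$, whereas the \texttt{eab} hypothesis only yields cancellation (with inclusions) among finitely generated ones. This weaker hypothesis does suffice, since the classical proof cancels $(a,b)$ only from the equality $(a,b)(a^2,b^2)=(a,b)(a,b)^2$ of finitely generated ideals, obtaining $(a^2,b^2)=(a,b)^2$ for all $a,b\in T$, which is known to force the Pr\"ufer property; with that remark made explicit, your argument is complete.
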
 
 \begin{proof}
By definition of the \texttt{eab} semistar operation of finite type associated to $\wedge_{\{T\}}$,  we have that $\wedge_{\{T\}}\in\SStareabf(D)$ if and only if $\wedge_{\{T\}}=(\wedge_{\{T\}})_{\mbox{\it\tiny \texttt{a}}}$. However, by Remark \ref{rem:stara} and Proposition \ref{overric}, and noting that $\wedge_{\{T\}}$ (when restricted to $\FF(T)$) coincides with the identity semistar $\mbox{\it \texttt{d}}_T$ on $\FF(T)$,  we have
\begin{equation*}
(\wedge_{\{T\}})_{\mbox{\it\tiny \texttt{a}}} =(\mbox{\it \texttt{d}}_T)_{\mbox{\it\tiny \texttt{a}}} \circ \wedge_{\{T\}} =\mbox{\it \texttt{b}}_T \circ \wedge_{\{T\}}= \mbox{\it \texttt{b}}(T)=\iota_{\mbox{\it\tiny \texttt{ic,a}}}(T),
\end{equation*}
where $\mbox{\it \texttt{b}}_T$   is the $\mbox{\it \texttt{b}}$-operation on $T$
and  $\mbox{\it \texttt{b}}(T) = \wedge_{\Zar(T)}$.
Therefore, (i) and (ii) are equivalent. It is obvious that (iii) $\Rightarrow$ (ii). For the reverse implication, it is enough to note that $\mbox{\it \texttt{d}}_T= \mbox{\it \texttt{b}}_T$ is equivalent to $T$ being Pr\"ufer \cite[Lemma 2]{fopi}. 
\end{proof}

Note that we can define the $\mbox{\it \texttt{b}}$-topology also on the whole $\overr(D)$. The pro\-perties that we obtain are somewhat similar to Propositions \ref{stable:subret}\ref{weak topology:stable} and \ref{stableft}\ref{weak topology:stableft}.

\begin{prop}\label{prop:bmap}
Let $D$ be an integral domain, and define the map
$
\bmap: \overr(D)  \rightarrow  \overric(D)$ by setting $\bmap(T) := T^{\mbox{\it\tiny \texttt{b}}} = \overline{T}$,
where $\overline{T}$ is the integral closure of $T$. 
\begin{enumerate}
\item[\rm(1)]\label{prop:bmap:zar} If  $\overric(D)$ and $\overr(D)$ are endowed with the Zariski topology, then $\bmap$ is continuous and, hence, it is a topological retraction of  $\overr(D)$ onto $\overric(D)$.
\item[\rm(2)]\label{prop:bmap:btop}  If  $\overric(D)$  is endowed with the Zariski topology and $\overr(D)$ is endowed with the $\mbox{\it\texttt{b}}$-topology, then $\bmap$
is the canonical  map onto the Kolmogoroff quotient space of $\overr(D)$.
\end{enumerate}
\end{prop}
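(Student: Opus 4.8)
The plan is to ground both parts in the single structural fact that $\mbox{\it\texttt{b}}(T)=\mbox{\it\texttt{b}}(\overline{T})$: a valuation overring $V$ of $D$ contains $T$ if and only if it contains $\overline{T}$ (valuation rings are integrally closed and $\overline{T}$ is integral over $T$), so $\Zar(T)=\Zar(\overline{T})$ and hence $\wedge_{\Zar(T)}=\wedge_{\Zar(\overline{T})}$. I would record two consequences: $D^{\mbox{\it\texttt{b}}(T)}=\bigcap\{V\mid V\in\Zar(T)\}=\overline{T}$, and more generally that membership in any set defined through $\mbox{\it\texttt{b}}(T)$ depends on $T$ only via $\overline{T}$. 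First I would note that $\bmap$ is well defined, landing in $\overric(D)$ since $\overline{T}$ is integrally closed, and that it restricts to the identity on $\overric(D)$; thus, whatever topology is in play, $\bmap$ will be a retraction as soon as it is shown continuous.

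For part (1) I would check continuity on the subbasis $\texttt{B}_F=\overric(D[x_1,\dots,x_n])$ of the Zariski topology of $\overric(D)$ coming from Proposition \ref{immersione}. Here $\bmap^{-1}(\texttt{B}_F)=\{T\in\overr(D)\mid x_1,\dots,x_n\in\overline{T}\}$, the overrings over which every $x_i$ is integral. The one non-formal step is to see this set is open: given $T_0$ in it, each integrality relation of $x_i$ over $T_0$ uses only finitely many coefficients $c_{ij}\in T_0$, so with $A:=D[\{c_{ij}\}]$ the basic open set $\overr(A)$ contains $T_0$ and is contained in $\bmap^{-1}(\texttt{B}_F)$ (any $T\supseteq A$ makes each $x_i$ integral over $A\subseteq T$). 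Hence the preimage is a union of basic opens and $\bmap$ is continuous; together with $\bmap|_{\overric(D)}=\mathrm{id}$ this yields the retraction.

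For part (2) the topology on $\overr(D)$ is the $\mbox{\it\texttt{b}}$-topology, with subbasic opens $\mathscr{U}(F,G):=\{T\in\overr(D)\mid F\subseteq G^{\mbox{\it\texttt{b}}(T)}\}$, the evident analogue of Olberding's sets. Using $\mbox{\it\texttt{b}}(T)=\mbox{\it\texttt{b}}(\overline{T})$ and Corollary \ref{b-zar} (which lets me replace the Zariski topology on $\overric(D)$ by the $\mbox{\it\texttt{b}}$-topology), I would compute $\bmap^{-1}(\mathscr{U}_{\mbox{\it\tiny\texttt{ic}}}(F,G))=\mathscr{U}(F,G)$, so $\bmap$ is continuous and pulls subbasic opens back to subbasic opens. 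Next I would characterize the fibers: $T_1$ and $T_2$ are topologically indistinguishable in the $\mbox{\it\texttt{b}}$-topology iff $\overline{T_1}=\overline{T_2}$. One direction is immediate (equal closures lie in the same subbasic opens, by $\mbox{\it\texttt{b}}(T_i)=\mbox{\it\texttt{b}}(\overline{T_i})$); for the converse, if $x\in\overline{T_1}\setminus\overline{T_2}$, then $\mathscr{U}(xD,D)=\{T\mid x\in\overline{T}\}$ separates them, using $D^{\mbox{\it\texttt{b}}(T)}=\overline{T}$. Thus the fibers of $\bmap$ are exactly the indistinguishability classes.

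Finally I would assemble the Kolmogoroff statement. Letting $q$ be the canonical surjection onto the Kolmogoroff quotient, the fact that $\bmap$ is continuous and surjective with fibers equal to the $\sim$-classes gives a factorization $\bmap=\bar\bmap\circ q$ with $\bar\bmap$ a continuous bijection onto $\overric(D)$. To see $\bar\bmap$ is a homeomorphism I would produce a continuous inverse cheaply: the inclusion $j:\overric(D)\hookrightarrow\overr(D)$ is continuous for these topologies (since $j^{-1}(\mathscr{U}(F,G))=\mathscr{U}_{\mbox{\it\tiny\texttt{ic}}}(F,G)$) and satisfies $\bmap\circ j=\mathrm{id}$, so $q\circ j$ is a continuous two-sided inverse of $\bar\bmap$. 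Hence $\bmap$ is, up to this homeomorphism, the canonical Kolmogoroff map. I expect the main obstacle to be the continuity step in part (1): recognizing that ``$x$ is integral over $T$'' is an open condition precisely because it is witnessed by finitely many elements of $T$, which is what places the preimage inside a basic open $\overr(A)$. Once $\mbox{\it\texttt{b}}(T)=\mbox{\it\texttt{b}}(\overline{T})$ and Corollary \ref{b-zar} are in hand, part (2) reduces to formal topology through the section $j$.
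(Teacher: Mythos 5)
Your proposal is correct and takes essentially the same route as the paper's proof: part (1) rests on the same observation that ``$x$ integral over $T$'' is an open condition witnessed by the finitely many coefficients of a monic relation (the paper writes the preimage as a union of basic opens $\texttt{B}_{\{\alpha_{k-1},\ldots,\alpha_0\}}$ indexed by such relations, which is your neighborhood argument in disguise), and part (2) rests on the same ingredients $\mbox{\it\texttt{b}}(T)=\mbox{\it\texttt{b}}(\overline{T})$, Corollary \ref{b-zar}, and the identity $\bmap^{-1}(\mathscr{U}_{\mbox{\it\tiny\texttt{ic}}}(F,G))=\mathscr{U}(F,G)$. The only cosmetic difference is in finishing (2): the paper checks directly that $\bmap$ is open and that its fibers are the indistinguishability classes, whereas you obtain the homeomorphism with the Kolmogoroff quotient via the continuous section $j$; you also usefully spell out, via $D^{\mbox{\it\tiny\texttt{b}}(T)}=\overline{T}$ and the separating set $\mathscr{U}(xD,D)$, the indistinguishability criterion the paper dismisses as ``easy to see.''
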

\begin{proof}
(1) Let $x$ be a nonzero element of $K$ and let $\texttt{B}_x$ be a subbasic open set of the Zariski topology on $\overric(T)$. Then, 
\begin{equation*}
\begin{split}
\bmap^{-1}(\texttt{B}_x)& =\{T\in\overr(D)\mid x\in\overline{T}\}=\{T\in\overr(D)\mid x\text{~is integral over~}T\}=\\
& = \bigcup\left\{\texttt{B}_{\{\alpha_{k-1}, \alpha_{k-2},\ldots,\alpha_0\}}\mid x^k+\alpha_{k-1}x^{k-1}+\cdots+\alpha_1x+\alpha_0=0\right\}
\end{split}
\end{equation*}
which is open since it is a union of open sets. 
Hence, $\bmap$ is continuous. It is a  retraction since,  for $T \in \overric(D)$, $T^{\mbox{\it\tiny \texttt{b}}} =T$.

(2) Since the Zariski topology on $\overric(D)$   coincides with the $\mbox{\it\texttt{b}}$-topology, we can consider on $\overric(D)$ the subbasic open sets  $\mathscr{U}_{\mbox{\it\tiny \texttt{ic}}}(F, G)$, for $F,G \in \f(D)$.
 However, since $b(T)=b({\overline{T}})$, we have $\bmap^{-1}(\mathscr{U}_{\mbox{\it\tiny \texttt{ic}}}(F, G))=\mathscr{U}(F, G) := \{T\in\overr(D)\mid F\subseteq G^{\mbox{\it\tiny \texttt{b}}(T)}\}$, and thus $\bmap$ is continuous. 
 In the same way, $\bmap(\mathscr{U}(F, G))=\mathscr{U}_{\mbox{\it\tiny \texttt{ic}}}(F, G)$, so $\bmap$ is open. Finally, it is easy to see that two overrings $T'$ and $T''$ are topologically distinguishable by the ${\mbox{\it\texttt{b}}}$-topology if and only if they have the same integral closure; hence, $\bmap$ is the canonical  map onto the Kolmogoroff quotient space of $\overr(D)$
 with the ${\mbox{\it \texttt{b}}}$-topology.
\end{proof}

The relation between valutative operations and subsets of $\Zar(D)$ exhibits a si\-mi\-lar behaviour to the relation between spectral operations and subsets of $\spec(D)$.  Similarly to the prime spectrum case (see the paragraph preceding Proposition \ref{spettrali}),  for each subset $Y$ of the Riemann-Zariski space $\Zar(D)$, we set $Y^{\mbox{\rm\tiny\mathttt{gen}}}:= \{z \in \Zar(D) \mid y \in \chius(\{z\}) \mbox{ for some } y \in Y \}$ and we denote $\mbox{\rm{\texttt{Cl}}}^{\mbox{\rm\tiny\mathttt{inv}}}(Y)$ the closure of $Y$ in the inverse topology of  the spectral space $\Zar(D)$  (see \cite{ dofo-86}, \cite[Proposition 8]{ho}  and \cite{fifolo2, FiSp}).
Compare the next lemma with Proposition \ref{spettrali}.

\begin{lemma}\label{zar-inv}
	 Let $D$ be an integral domain and let $Y$ and $Z$ be two nonempty subsets of $\mbox{\rm\Zar}(D)$.
	Then, the following statements hold. 
	\begin{enumerate}[\rm(1)]
		\item $\wedge_Y=\wedge_Z$ if and only if $Y^{\mbox{\rm\tiny\mathttt{gen}}}=Z^{\mbox{\rm\tiny\mathttt{gen}}}$.

		\item $\wedge_Y$ is of finite type if and only if $Y$ is quasi-compact.

		\item $(\wedge_Y)_{\!{_f}}=(\wedge_Z)_{\!{_f}}$ if and only if 
		 $\mbox{\rm{\texttt{Cl}}}^{\mbox{\rm\tiny\mathttt{inv}}}(Y)=
		 \mbox{\rm{\texttt{Cl}}}^{\mbox{\rm\tiny\mathttt{inv}}}(Z)$.
		 \item $(\wedge_Y)_{\!{_f}}= \wedge_{\mbox{\rm\footnotesize{\texttt{Cl}}}^{\mbox{\rm\tiny\mathttt{inv}}}(Y)}$. 
	\end{enumerate}
\end{lemma}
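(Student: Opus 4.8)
The plan is to follow the template of Proposition \ref{spettrali}, transporting every step from $\spec(D)$ to the Riemann--Zariski space $\Zar(D)$, which is itself a spectral space \cite{ho,FiSp}. Two structural facts drive everything. First, the correspondence $V\mapsto\wedge_{\{V\}}$ is order-reversing and compatible with the Zariski topology of $\Zar(D)$: a valuation overring $V'$ is a generization of $V$ exactly when $V\subseteq V'$, so that $Y^{\mathtt{gen}}$ consists of the valuation overrings containing some member of $Y$. Second, for a finitely generated $F=x_1D+\cdots+x_nD$ and a valuation overring $V$ one has $FV=x_jV$ for a generator of least value, so that $x\in FV$ if and only if $xx_j^{-1}\in V$ for some $j$; consequently, for $0\neq x\in K$,
\begin{equation*}
\{V\in\Zar(D)\mid x\in FV\}=\bigcup_{j=1}^n\Zar(D[xx_j^{-1}])
\end{equation*}
is a quasi-compact open subset of $\Zar(D)$. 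This elementary observation is the engine behind all four parts.

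For (1), monotonicity of $Y\mapsto\wedge_Y$ together with the inclusion $\bigcap_{W\in Y}EW\subseteq Ez$ (valid whenever $W\subseteq z$ for some $W\in Y$) gives $\wedge_Y=\wedge_{Y^{\mathtt{gen}}}$, which settles the implication $(\Leftarrow)$. For $(\Rightarrow)$ I would recover $Y^{\mathtt{gen}}$ intrinsically from the operation by proving
\begin{equation*}
Y^{\mathtt{gen}}=\{V\in\Zar(D)\mid \wedge_{\{V\}}\succeq\wedge_Y\}.
\end{equation*}
The inclusion $\subseteq$ is immediate. For $\supseteq$ I argue by contraposition: if $V\notin Y^{\mathtt{gen}}$, then for each $W\in Y$ I choose $a_W\in W\setminus V$, so that $a_W^{-1}$ lies in the maximal ideal $M_V$ of $V$; the module $E:=\sum_{W\in Y}a_W^{-1}D$ then satisfies $E\subseteq M_V$, whence $1\notin EV$, while $1\in a_W^{-1}W\subseteq EW$ for every $W$, so $1\in E^{\wedge_Y}\setminus EV$ and $\wedge_{\{V\}}\not\succeq\wedge_Y$. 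Thus $Y^{\mathtt{gen}}$ is determined by $\wedge_Y$, and $\wedge_Y=\wedge_Z$ forces $Y^{\mathtt{gen}}=Z^{\mathtt{gen}}$. Note that this argument genuinely requires a non–finitely generated $E$.

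Part (2) I would handle directly through the Alexander subbasis lemma. If $Y$ is quasi-compact and $x\in E^{\wedge_Y}$, covering $Y$ by the quasi-compact open sets $\{V\mid x\in FV\}$ for finitely generated $F\subseteq E$ and extracting a finite subcover produces a single finitely generated $F\subseteq E$ with $x\in F^{\wedge_Y}$, so $\wedge_Y$ is of finite type. Conversely, given a subbasic cover $Y\subseteq\bigcup_{i\in I}\Zar(D[x_i])$, the module $E:=\sum_{i\in I}x_i^{-1}D$ satisfies $1\in E^{\wedge_Y}$; if $\wedge_Y$ is of finite type then $1\in F^{\wedge_Y}$ for some finitely generated $F\subseteq E$, hence $F\subseteq\sum_{i\in S}x_i^{-1}D$ for a finite $S\subseteq I$, and unwinding $1\in(\sum_{i\in S}x_i^{-1}D)V$ for all $V\in Y$ yields exactly $Y\subseteq\bigcup_{i\in S}\Zar(D[x_i])$, a finite subcover.

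Parts (4) and (3) are the crux. Since $\Cl^{\mathtt{inv}}(Y)$, being an intersection of quasi-compact open sets, is quasi-compact and stable under generization \cite{ho}, part (2) makes $\wedge_{\Cl^{\mathtt{inv}}(Y)}$ of finite type; as it precedes $\wedge_Y$ (because $Y\subseteq\Cl^{\mathtt{inv}}(Y)$), it precedes $(\wedge_Y)_{\!{_f}}$. For the reverse inequality it suffices, both sides being of finite type, to check on a finitely generated $F$: if $x\in F^{\wedge_Y}$ then $Y$ lies in the quasi-compact open set $\{V\mid x\in FV\}$, hence so does $\Cl^{\mathtt{inv}}(Y)$, giving $x\in FV$ for every $V\in\Cl^{\mathtt{inv}}(Y)$ and thus $x\in F^{\wedge_{\Cl^{\mathtt{inv}}(Y)}}$. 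This proves (4). Finally (3) follows by combining (4) with (1): inverse-closed sets are stable under generization, so $\Cl^{\mathtt{inv}}(Y)^{\mathtt{gen}}=\Cl^{\mathtt{inv}}(Y)$, and $(\wedge_Y)_{\!{_f}}=(\wedge_Z)_{\!{_f}}$ reads $\wedge_{\Cl^{\mathtt{inv}}(Y)}=\wedge_{\Cl^{\mathtt{inv}}(Z)}$, which by (1) is equivalent to $\Cl^{\mathtt{inv}}(Y)=\Cl^{\mathtt{inv}}(Z)$. The main obstacle, I expect, is isolating the two module-theoretic constructions — the separating module $\sum a_W^{-1}D$ for (1) and the quasi-compact open sets $\{V\mid x\in FV\}$ for (2) and (4) — and importing the spectral-space facts about $\Zar(D)$ (in particular the quasi-compactness of the inverse closure) that make the analogy with Proposition \ref{spettrali} go through.
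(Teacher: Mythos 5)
Your proposal is correct, and for part (1) it coincides with the paper's own proof: your separating module $\sum_{W} a_W^{-1}D$ built from choices $a_W\in W\setminus V$ is exactly the paper's ideal $I:=(x_W^{-1}\mid W\in Z)$, the only difference being that the paper runs the construction as a direct contradiction from $\wedge_Y=\wedge_Z$ (picking $V\in Y^{\mathtt{gen}}\setminus Z^{\mathtt{gen}}$), while you package it as the intrinsic characterization $Y^{\mathtt{gen}}=\{V\in\Zar(D)\mid \wedge_{\{V\}}\succeq\wedge_Y\}$ --- a repackaging, not a new idea. The genuine divergence is in parts (2)--(4), which the paper does not prove at all: it delegates them to \cite[Proposition 4.5]{FiSp} for (2) and to \cite[Theorem 4.9 and Corollary 4.17]{fifolo2} for (3) and (4). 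You instead give self-contained arguments: Alexander's subbasis lemma together with the identity $\{V\in\Zar(D)\mid x\in FV\}=\bigcup_{j=1}^n\Zar(D[xx_j^{-1}])$ for (2); quasi-compactness and generization-stability of $\Cl^{\mathtt{inv}}(Y)$ for (4); and the formal deduction of (3) from (4) and (1), which reverses the order in which the cited reference apparently proceeds (there (4) appears as a corollary of (3)). What your route buys is a lemma whose only external inputs are classical --- Zariski's quasi-compactness theorem for $\Zar(T)$, $T$ an overring of $D$ (needed so that your covering sets are quasi-compact open and hence occur in the intersection defining $\Cl^{\mathtt{inv}}(Y)$), plus Hochster's spectral-space generalities --- at the cost of length. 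Notably, the direction ``finite type implies quasi-compact'' in your (2) never invokes quasi-compactness of $\Zar(D)$ or of its subbasic open sets, so the claim of Remark \ref{rk-2.4}(a) that this lemma yields a new proof of Zariski's theorem remains non-circular under your proof. The two potentially slippery steps both check out: $\Cl^{\mathtt{inv}}(Y)$ is indeed Zariski-quasi-compact (being patch-closed in a spectral space), and two finite-type semistar operations agreeing on $\f(D)$ are equal, which is what closes part (4) after you verify both inequalities on finitely generated modules.
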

\begin{proof}
(1) Note that, in the present situation,  $Y^{\mathttt{gen}} = \{ÊV \in \mbox{\rm\Zar}(D) \mid V \supseteq V_0, $ for some $V_0 \in Y \}$.
Assume first that $\wedge_Y=\wedge_Z$. Let $V$ be a valuation domain such that $V\in Y^{\mathttt{gen}}\setminus Z^{\mathttt{gen}}$. 
Then, for any $W\in Z$, we can pick an element $x_W\in W\setminus V$. 
It follows that $I:=(x_W^{-1} \mid W\in Z)\subseteq M_V$, where $M_V$ is the maximal ideal of $V$. 
Thus, if $V_0\in Y$ is such that $V_0\subseteq V$ (such a $V_0$ exists since $V\in Y^{\mathttt{gen}}$),
 we have $I V_0\subseteq M_{V_0}$ and, in particular, 
$1\notin I^{\wedge_Y}$. 
On the other hand, clearly $1\in I^{\wedge_Z}$, a contradiction. The converse it is straightforward since, for each $Y \subseteq 
\mbox{\rm\Zar}(D)$, $\wedge_Y = \wedge_{Y^{\mathttt{gen}}}$. 

For (2), (3) and (4), see respectively \cite[Proposition 4.5]{FiSp}, \cite[Theorem 4.9]{fifolo2} and, \cite[Corollary 4.17]{fifolo2}.
\end{proof}
\medskip

\begin{oss} \label{rk-2.4}
(a) Since $\mbox{\it \texttt{b}}=\wedge_{\Zar(D)}$ is a semistar operation of finite type on $D$ (and this can be proved completely independently from the topological point of view, see \cite[Proposition 6.8.2]{swhu} and \cite[Remark 4.6]{FiSp}), from Lemma \ref{zar-inv} we get a new proof of the fact that $\Zar(D)$ is a quasi-compact space (this is a special case of Zariski's theorem \cite[Theorem 40, page 113]{zs}).

 (b) Note that the equivalence (1) of Lemma \ref{zar-inv} can   also be viewed in a topological way, as indicated in Remark \ref{gen}, after replacing $\Zar(D)$  to $\spec(D)$.
 \end{oss}

 \medskip

Using the $\mbox{\it \texttt{b}}$-operation, Krull introduced a general version of the classical Kronecker fun\-ction ring, coinciding in case of Dedekind domains with the classical one (considered by L. Kronecker \cite{weyl, ed, gi}). 
In fact, a Kronecker fun\-ction ring can be defined starting by any \texttt{eab} semistar operation. 
In the next lemma, we summarize some properties of the Kronecker function ring, relevant to the remaining part of the paper.

\begin{lemma} \label{kr-star}
Let $D$ be an integral domain, $\star$ an \texttt{eab} semistar operation on $D$, $\X$ an indeterminate over $D$ and let $\boldsymbol{c}(h)$ be the content of a polynomial
 $h \in D[\X]$ (i.e., the ideal of $D$ generated by the coefficients of $h$).
 Set $\mathscr{V}(\star) := \{ V \in \mbox{\rm \Zar}(D) \mid F^\star \subseteq FV, \mbox{ for each } 
F \in \f(D)\}$ and 
$$
\begin{array}{rl}
\Kr(D, \star) :=& \hskip -7pt\{ f/g \mid   f,g \in D[\X],g 
\neq 0, 
\; 
\mbox{ 
	with } \boldsymbol{c}(f)
\subseteq \boldsymbol{c}(g)^\star  \} .
\end{array}
$$
Denote by $V(\X)$  the Gaussian (or trivial) extension of $V$ to $K(\X)$ (i.e., $V(\X)= V[\X]_{M[\X]}$, where $M$ is the maximal ideal of $V$).
 \begin{enumerate}
 \item[\rm(1)] $\Kr(D, \star)$ is a B\'ezout domain with quotient field $K(\X)$, called \emph{the $\star$-Kronecker function ring of $D$} and, for each polynomial $f \in D[\X]$, $\cont(f)\Kr(D, \star) = f\Kr(D, \star)$.
\item[\rm(2)] $\Kr(D, \star) = \bigcap\{ V(\X) \mid V \in {\mathscr{V}}(\star) \}$ and, for each $E \in \FF(D)$,
 $
 E^{\sta} = E \Kr(D, \star) \cap K = \bigcap \{ EV \mid V \in \mathscr{V}(\star) \} .
 $ 
 \item[\rm(3)] If $T$ is an overring of $D$, then ${\mathscr{V}}\left((\wedge_{\{T\}})_{{\mbox{\it \tiny \texttt{a}}}}\right) =
 {\mbox{\rm \Zar}}(T)$;  in particular, we reobtain that
 $(\wedge_{\{T\}})_{\mbox{\it \tiny \texttt{a}}} = \mbox{\it \texttt{b}}(T)$.
\end{enumerate}
 \end{lemma}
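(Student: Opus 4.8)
The plan is to run everything through two classical tools: the Dedekind--Mertens content formula and Krull's theorem that an integrally closed domain is the intersection of its valuation overrings. The key preliminary fact, which I would establish first, is the \emph{content formula for \texttt{eab} operations}: for all $f,g\in D[\X]$ one has $\cont(fg)^\star=(\cont(f)\cont(g))^\star$. The inclusion $\cont(fg)\subseteq\cont(f)\cont(g)$ is Gauss; for the reverse, Dedekind--Mertens gives $\cont(f)^m\cont(fg)=\cont(f)^{m+1}\cont(g)$ with $m=\deg g$, so that $(\cont(f)^m\cont(fg))^\star=(\cont(f)^m\cdot\cont(f)\cont(g))^\star$, and cancelling the common finitely generated factor $\cont(f)^m$ through the \texttt{eab} property yields the formula. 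Combined with the elementary identity $(A^\star B^\star)^\star=(AB)^\star$ (valid for any semistar operation), this is what controls the arithmetic of $\Kr(D,\star)$.

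For Part (1), closure of $\Kr(D,\star)$ under products is immediate: if $\cont(f_i)\subseteq\cont(g_i)^\star$, then $\cont(f_1f_2)^\star=(\cont(f_1)\cont(f_2))^\star\subseteq(\cont(g_1)\cont(g_2))^\star=\cont(g_1g_2)^\star$; closure under sums follows similarly from $\cont(f_1g_2+f_2g_1)\subseteq\cont(f_1)\cont(g_2)+\cont(f_2)\cont(g_1)$. Since $D\subseteq\Kr(D,\star)$ and $\X,\X^{-1}\in\Kr(D,\star)$, the quotient field is $K(\X)$. The equality $\cont(f)\Kr(D,\star)=f\Kr(D,\star)$ for $f\in D[\X]$ is then direct ($f\in\cont(f)[\X]$ gives one inclusion, and each coefficient $a$ of $f$ satisfies $aD\subseteq\cont(f)^\star$, so $a/f\in\Kr(D,\star)$, giving the other). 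The B\'ezout property comes from the standard packing trick: clearing denominators, a finitely generated ideal may be assumed generated by $f_1,\dots,f_n\in D[\X]$; setting $f:=\sum_i f_i\X^{e_i}$ with the exponents $e_i$ spread out so that $\cont(f)=\sum_i\cont(f_i)$, one gets $(f_1,\dots,f_n)\Kr(D,\star)=\cont(f)\Kr(D,\star)=f\Kr(D,\star)$, which is principal.

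For Part (2), the inclusion $\Kr(D,\star)\subseteq V(\X)$ for $V\in\mathscr{V}(\star)$ is easy: $\cont(f)\subseteq\cont(g)^\star\subseteq\cont(g)V$ gives $\cont(f)V\subseteq\cont(g)V$, and since over a valuation domain membership in $V(\X)$ is read off contents ($fV(\X)=\cont(f)V(\X)$), this forces $f/g\in V(\X)$. The reverse inclusion $\bigcap\{V(\X)\mid V\in\mathscr{V}(\star)\}\subseteq\Kr(D,\star)$ is the step I expect to be the main obstacle; I would obtain it from Krull's theorem applied to the B\'ezout (hence integrally closed) domain $\Kr(D,\star)$ by identifying its valuation overrings. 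Concretely, one shows that every valuation overring $W$ of $\Kr(D,\star)$ in $K(\X)$ is the trivial extension $V(\X)$ of $V:=W\cap K$ --- this uses that $\X$ is a unit of $\Kr(D,\star)$ and that $\cont(f)W=fW$, pinning $W$ down as the Gaussian extension of the valuation $W\cap K$ --- and that $V\in\mathscr{V}(\star)$, the latter by feeding constant polynomials $a\in\cont(g)^\star$ into $a/g\in\Kr(D,\star)\subseteq W$ to deduce $\cont(g)^\star\subseteq\cont(g)V$. Separately I would verify the algebraic identity $E^{\sta}=E\Kr(D,\star)\cap K$: for $F\in\f(D)$, writing $F=\cont(f)$ gives $F\Kr(D,\star)\cap K=f\Kr(D,\star)\cap K=F^\star=F^{\sta}$, where the computation that any representation $a/f=p/q\in\Kr(D,\star)$ forces $a\in F^\star$ again uses the content formula and \texttt{eab} cancellation, and $F^\star=F^{\sta}$ holds because $\star$ is \texttt{eab} (so $\sta=\stf$ agrees with $\star$ on $\f(D)$); the general case follows as both sides are of finite type. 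Finally, combining $\Kr(D,\star)=\bigcap V(\X)$ with $EV(\X)\cap K=EV$ yields $E\Kr(D,\star)\cap K=\bigcap\{EV\mid V\in\mathscr{V}(\star)\}$.

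For Part (3) I would argue directly from the definition of the associated \texttt{eab} operation, avoiding the identity $(\wedge_{\{T\}})_{\mbox{\it \tiny\texttt{a}}}=\mbox{\it \texttt{b}}(T)$ so as to genuinely reobtain it. Unfolding the definition, $F^{(\wedge_{\{T\}})_{\mbox{\it \tiny\texttt{a}}}}=\bigcup\{(FGT:GT)\mid G\in\f(D)\}$ for $F\in\f(D)$. If $V\supseteq T$, then for each $G$ and each $x$ with $xGT\subseteq FGT$ one has $xGV\subseteq FGV$, and invertibility of the finitely generated ideal $GV$ over the valuation domain $V$ gives $x\in FV$; hence $F^{(\wedge_{\{T\}})_{\mbox{\it \tiny\texttt{a}}}}\subseteq FV$ and $V\in\mathscr{V}((\wedge_{\{T\}})_{\mbox{\it \tiny\texttt{a}}})$. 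Conversely, specializing to $F=D$ and $G=D$ shows $T\subseteq(T:T)\subseteq D^{(\wedge_{\{T\}})_{\mbox{\it \tiny\texttt{a}}}}\subseteq V$, so $V\supseteq T$. Thus $\mathscr{V}((\wedge_{\{T\}})_{\mbox{\it \tiny\texttt{a}}})=\Zar(T)$, and substituting this into the formula of Part (2) gives $E^{(\wedge_{\{T\}})_{\mbox{\it \tiny\texttt{a}}}}=\bigcap\{EV\mid V\in\Zar(T)\}=E^{\wedge_{\Zar(T)}}=E^{\mbox{\it \texttt{b}}(T)}$, reobtaining $(\wedge_{\{T\}})_{\mbox{\it \tiny\texttt{a}}}=\mbox{\it \texttt{b}}(T)$.
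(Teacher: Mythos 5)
Your route is necessarily different from the paper's, because the paper does not actually prove this lemma: it is a summary of known results, established by citation to Gilmer and to Fontana--Loper, with part (3) noted to be a direct consequence of (2) and the definitions. Your reconstruction follows the classical line of those sources, and most of it is sound: the Dedekind--Mertens plus \texttt{eab}-cancellation argument for the content formula $\cont(fg)^\star=(\cont(f)\cont(g))^\star$, the exponent-spreading trick for the B\'ezout property, the identity $\cont(f)\Kr(D,\star)=f\Kr(D,\star)$, the use of Krull's theorem together with the verification that every valuation overring $W$ of $\Kr(D,\star)$ satisfies $V(\X)\subseteq W$ and $V:=W\cap K\in\mathscr{V}(\star)$, and the cancellation computation showing $a/f\in\Kr(D,\star)\Rightarrow a\in\cont(f)^\star$. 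Your part (3) is also exactly the intended ``consequence of (2) and the definitions''.

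There is, however, one genuine gap, in the last sentence of your part (2). From $\Kr(D,\star)=\bigcap\{V(\X)\mid V\in\mathscr{V}(\star)\}$ and $EV(\X)\cap K=EV$ you obtain only the inclusion $E\Kr(D,\star)\cap K\subseteq\bigcap\{EV\mid V\in\mathscr{V}(\star)\}$; the reverse inclusion does not follow by ``combining'', because extension of modules does not commute with intersection of rings. Your argument is complete when $E=F\in\f(D)$: there $F\Kr(D,\star)=f\Kr(D,\star)$ and $FV(\X)=fV(\X)$ are generated by the single element $f$, and multiplication by one fixed element does commute with intersections, giving $\bigcap_V FV=f\bigl(\bigcap_V V(\X)\bigr)\cap K=F\Kr(D,\star)\cap K$. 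For arbitrary $E\in\FF(D)$, though, the inclusion $\bigcap_V EV\subseteq E\Kr(D,\star)\cap K=E^{\sta}$ is a quantifier exchange --- one finitely generated $F\subseteq E$ must witness $x\in FV$ for all $V\in\mathscr{V}(\star)$ simultaneously --- and this cannot be had for free; it requires a compactness input. The cheapest repair inside this paper: for $F\in\f(D)$ and $x\in F^\star$, the set $\{V\in\Zar(D)\mid x\in FV\}$ is open and quasi-compact (it is a finite union of sets $\Zar(D[x/a_i])$, where $F=(a_1,\dots,a_n)D$), so $\mathscr{V}(\star)$, being the intersection of all such sets, is closed in the constructible topology of the spectral space $\Zar(D)$ and hence quasi-compact; therefore $\wedge_{\mathscr{V}(\star)}$ is of finite type by Lemma~\ref{zar-inv}(2), and since $\sta$ is also of finite type and the two operations agree on $\f(D)$ by your computation, they agree on all of $\FF(D)$. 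Note that your part (3) is unaffected by this issue: it compares $(\wedge_{\{T\}})_{\mbox{\it\tiny\texttt{a}}}$ with $\mbox{\it \texttt{b}}(T)$, both of finite type, so only the finitely generated case of the formula in (2) is actually needed there.
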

\begin{proof}
For (1), see \cite[Definition 3.2, Propositions 3.3 and 3.11(2), Corollary 3.4(2) and Theorem 5.1]{folo-2001}, \cite[Theorem 32.11]{gi} 
and \cite[Theorems 11 and 14]{folo-2006}. 
 For the proof of (2) see \cite[Proposition 4.1(5)]{folo-2001} and \cite[Theorem 14]{folo-2006}. 
 (3) is a direct consequence of (2) and of the definitions. 
\end{proof}
 
 \smallskip
 
 When considering Kronecker function rings, particularly important is the case $\star = {\mbox{\it\texttt{b}}}$ and, in this case, we simply set $\Kr(D):=\mbox{Kr}(D,\mbox{\it \texttt{b}})$. In this situation, it follows easily from the the fact that $\Kr(D)$ is a B\'ezout domain that
 the localization map $\spec(\Kr(D))\rightarrow \Zar(\Kr(D))$ (defined by $P\mapsto\Kr(D)_P$) is actually a homeomorphism. Moreover, the map $\Psi:\Zar(D)\rightarrow\Zar(\Kr(D))$ (defined by $V\mapsto V(\X)$) is a homeomorphism too \cite[Propositions 3.1 and 3.3]{fifolo2}, so that, by appropriate compositions, we deduce that there is a canonical homeomorphism between $\spec(\Kr(D))$ and $\Zar(D)$.

We are now in condition to prove the main result of this section.
 \begin{teor} \label{eab-op} 
	Let $D$ be an integral domain with quotient field $K$. Let $\Kr(D):=\Kr(D,\mbox{\it \texttt{b}})$
	 be the $\mbox{\it \texttt{b}}$-Kronecker function ring of $D$
	 and let ${\theta}: \spec(\Kr(D))\rightarrow \mbox{\rm \Zar}(D)$ be the homeomorphism defined by ${\theta}(Q) :=\Kr(D)_Q \cap K$, for each $Q \in \spec(\Kr(D))$ \cite[Theorem 2]{dofo-86}.
	Then, 
	\begin{enumerate}
	\item[\rm (1)]   The homeomorphism ${\theta}$ induces a   continuous  bijection $\boldsymbol{\Theta}: \SStarsp(\Kr(D)) \rightarrow $ $\SStarval(D) $ defined by setting $\boldsymbol{\Theta}(s_{ \mathscr{Y}}):=\wedge_{Z(\mathscr{Y})}$, for each $ \mathscr{Y} \subseteq \spec(\Kr(D))$, where $Z(\mathscr{Y}):= \{ V \in \mbox{\rm \Zar}(D) \mid M_V(\X) \cap \Kr(D) \in \mathscr{Y} \}$ and $M_V$ is the ma\-xi\-mal ideal of $V$.
	
 \item[\rm(2)] The map $\boldsymbol{\Theta}$, restricted to the semistar operations of finite type, gives rise to a homeomorphism 
 $\boldsymbol{\Theta}_{\mbox{\it \tiny\texttt{f}}}: \SStarspf(\Kr(D)) \rightarrow \SStareabf(D)$ of topological spaces (endowed with the Zariski topology).
 
	 \item[\rm(3)] $\SStareabf(D)$ is a spectral space.
	 \end{enumerate}
	 \end{teor}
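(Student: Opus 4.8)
The plan is to transport the whole statement across the homeomorphism $\theta$ and to deduce (3) formally from (2). First I would rewrite $Z(\mathscr{Y})$ in terms of $\theta$. Because $\Kr(D)$ is Bézout, the localization map identifies $\spec(\Kr(D))$ with $\Zar(\Kr(D))$, and under the homeomorphism $\Psi\colon V\mapsto V(\X)$ the center $M_V(\X)\cap\Kr(D)$ of $V(\X)$ on $\Kr(D)$ is precisely $\theta^{-1}(V)$; indeed $\Kr(D)_{M_V(\X)\cap\Kr(D)}=V(\X)$ and $V(\X)\cap K=V$. Hence $Z(\mathscr{Y})=\theta(\mathscr{Y})$ and $\boldsymbol{\Theta}(s_{\mathscr{Y}})=\wedge_{\theta(\mathscr{Y})}$. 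To check that $\boldsymbol{\Theta}$ is a well-defined bijection I would use that $\theta$, being a homeomorphism of spectral spaces, preserves specialization and therefore satisfies $\theta(\mathscr{Y}^{\mathttt{gen}})=\theta(\mathscr{Y})^{\mathttt{gen}}$; together with the parametrizations $s_{\mathscr{Y}}=s_{\mathscr{Y}'}\Leftrightarrow\mathscr{Y}^{\mathttt{gen}}=\mathscr{Y}'^{\mathttt{gen}}$ (Proposition \ref{spettrali}(1)) and $\wedge_Y=\wedge_Z\Leftrightarrow Y^{\mathttt{gen}}=Z^{\mathttt{gen}}$ (Lemma \ref{zar-inv}(1)), this makes $\boldsymbol{\Theta}$ a bijection onto $\SStarval(D)$.

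For the continuity in (1) the key point is the identity: for $V\in\Zar(D)$ and $E\in\FF(D)$ one has $1\in EV$ if and only if $1\in EV(\X)$. The forward implication is clear; for the converse, $1\in EV(\X)$ already holds in $FV(\X)$ for some finitely generated $F=(e_1,\dots,e_n)D\subseteq E$, and since $V(\X)$ is a valuation domain with the same value group as $V$ and the $e_i$ lie in $K$, the module $FV(\X)$ is generated by the $e_j$ of least value, which is the same element generating $FV$; thus $e_j^{-1}\in V(\X)\cap K=V$ and $1\in FV\subseteq EV$. Since $(E\Kr(D))\Kr(D)_Q=EV(\X)$ for $Q=\theta^{-1}(V)$, this identity gives $\boldsymbol{\Theta}^{-1}(\texttt{V}_E\cap\SStarval(D))=\texttt{V}_{E\Kr(D)}\cap\SStarsp(\Kr(D))$, an open set; hence $\boldsymbol{\Theta}$ is continuous.

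For (2) I would first observe that $\boldsymbol{\Theta}$ respects finite type: by Proposition \ref{spettrali}(2) and Lemma \ref{zar-inv}(2), $s_{\mathscr{Y}}$ is of finite type iff $\mathscr{Y}$ is quasi-compact iff $\theta(\mathscr{Y})$ is quasi-compact iff $\wedge_{\theta(\mathscr{Y})}$ is of finite type, so $\boldsymbol{\Theta}$ restricts to a continuous bijection $\boldsymbol{\Theta}_{\mbox{\it\tiny\texttt{f}}}\colon\SStarspf(\Kr(D))\to\SStareabf(D)$ (recall $\SStareabf(D)=\SStarval(D)\cap\SStarf(D)$). It remains to prove that $\boldsymbol{\Theta}_{\mbox{\it\tiny\texttt{f}}}$ is open. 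Applying Proposition \ref{stableft}(1) to $\Kr(D)$, a subbasis of $\SStarspf(\Kr(D))$ consists of the $\texttt{V}_J$ with $J=h\Kr(D)$ principal, $h=f/g$, $f,g\in D[\X]$. Translating membership through the content, for a valuation domain $V$ one has $g/f\in V(\X)$ iff $\cont(g)V\subseteq\cont(f)V$, and globalizing over $Y=\theta(\mathscr{Y})$ this becomes $\cont(g)\subseteq\cont(f)^{\wedge_Y}$. Hence $\boldsymbol{\Theta}_{\mbox{\it\tiny\texttt{f}}}(\texttt{V}_J\cap\SStarspf(\Kr(D)))=\{\wedge_Y\mid\cont(g)\subseteq\cont(f)^{\wedge_Y}\}$, which is the finite intersection of the sets $\texttt{V}_{d^{-1}\cont(f)}$ as $d$ runs over a generating set of $\cont(g)$, hence open in $\SStareabf(D)$. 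I expect this content and Gaussian-extension bookkeeping---translating conditions over $K$ into conditions over $K(\X)$---to be the main obstacle, since it is the only place where the arithmetic of $\Kr(D)$ genuinely intervenes.

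Finally, (3) is immediate: finite type stable and finite type spectral operations coincide, so $\SStarspf(\Kr(D))=\SStarstabft(\Kr(D))$, which is a spectral space by Theorem \ref{stable spectral} applied to the domain $\Kr(D)$. Since being spectral is preserved by homeomorphisms, the homeomorphism $\boldsymbol{\Theta}_{\mbox{\it\tiny\texttt{f}}}$ of part (2) shows that $\SStareabf(D)$ is a spectral space.
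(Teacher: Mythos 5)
Your proof is correct, and its skeleton is the same as the paper's: the bijection in (1) via the generization-closed parametrizations (Proposition \ref{spettrali}(1) and Lemma \ref{zar-inv}(1)) transported along $\theta$, continuity via an identity comparing $E^{\wedge_{Z(\mathscr{Y})}}$ with $(E\Kr(D))^{s_{\mathscr{Y}}}$, the finite-type correspondence in (2) via quasi-compactness, openness via the same subbasis of sets $\texttt{V}_J$ with $J$ principal (Bézout) and the very same image formula $\bigcap_i \texttt{V}_{b_i^{-1}\cont(\alpha)}\cap\SStareabf(D)$, and (3) deduced formally from Theorem \ref{stable spectral} applied to $\Kr(D)$ together with $\SStarspf(\Kr(D))=\SStarstabft(\Kr(D))$. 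The one genuine variation is how you prove the image formula (the paper's ``Claim''): the paper routes the converse inclusion through the $\ast$-Kronecker function ring $\Kr(D,\ast)$ of the \texttt{eab} operation $\ast=\boldsymbol{\Theta}_f(\star)$, invoking Lemma \ref{kr-star}(1),(2) (the content formula $f\Kr(D,\ast)=\cont(f)\Kr(D,\ast)$ and the representation $E^{\ast}=E\Kr(D,\ast)\cap K$), whereas you argue valuation-by-valuation with Gauss's lemma for the Gaussian extension: $g/f\in V(\X)$ if and only if $\cont(g)V\subseteq\cont(f)V$, then globalize over $Y=\theta(\mathscr{Y})$. Your route is more elementary and treats both inclusions symmetrically, needing only that $V$ and $V(\X)$ have the same value group; the paper's route buys a reusable interface to the Kronecker-function-ring machinery, which it exploits again immediately afterwards (in the remark showing $\SStareabf(D)$ is closed in the constructible topology). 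The same trade-off occurs in your continuity argument in (1), where the direct verification of $1\in EV\Leftrightarrow 1\in EV(\X)$ replaces the paper's citation of an external result.
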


\begin{proof}
(1) It is straightforward that the homeomorphism ${\theta}$ from $\spec(\Kr(D))$ to $\Zar(D)$  (which is, in particular, a isomorphism of partially ordered sets with the ordering induced by their topologies)  induces a 1-1 correspondence
 $\boldsymbol{\Theta}_{\!{_0}}$ between the set $\{ \mathscr{Y} \subseteq\spec(\Kr(D)) \mid \mathscr{Y} = \mathscr{Y}^\downarrow\}$ (where $ \mathscr{Y} ^\downarrow:= \{Q \in \spec(\Kr(D))\mid Q \subseteq Q',$ for some $Q' \in \mathscr{Y} \} = {\mathscr{Y}}^{\texttt{gen}}$) 
 and the set $\{ Y \subseteq \Zar(D) \mid Y= Y^{\uparrow}\} $ 
 (where $Y^{\uparrow}:= \{W \in \Zar(D) \mid W \supseteq W',$ for some $W' \in Y\} = {Y}^{\texttt{gen}}$). 
 Therefore $\boldsymbol{\Theta}_{\!{_0}}$ induces a bijection $\boldsymbol{\Theta}: \SStarsp(\Kr(D)) \rightarrow \SStarval(D) $ defined by $\boldsymbol{\Theta}(s_{ \mathscr{Y}}):=\wedge_{\boldsymbol{\Theta}_{\!{_0}}( \mathscr{Y} )}$, where ${\boldsymbol{\Theta}_{\!{_0}}( \mathscr{Y})} = \{ V \in \Zar(D) \mid M_V(\X) \cap \Kr(D) \in \mathscr{Y} \} =: Z(\mathscr{Y})$ (cf. Lemma \ref{zar-inv}, \cite[Corollaries 4.4 and 5.2, Proposition 5.1]{FiSp} and \cite[Lemma 4.2 and Remark 4.5]{fohu}).

Moreover, the map $\boldsymbol{\Theta}$ is continuous by \cite[Proposition 3.1(1)]{FiSp}, since
 $ (E\Kr(D))^{\mbox{\it\texttt{s}}_{ \mathscr{Y}}} $ $ \cap K 
 = E^{\wedge_{Z(\!\mathscr{Y})}}$ for each $E \in \FF(D)$ and for each $ \mathscr{Y} \subseteq\spec(\Kr(D))$. In fact,
$$
\begin{array}{rl}
E^{\wedge_{Z(\!\mathscr{Y})}}= & \hskip -7pt \bigcap\{EV \mid V \in Z(\!\mathscr{Y})\} = \bigcap \{EV\!(\X) \cap K \mid V \in Z(\!\mathscr{Y})\} \\
= & \hskip -7pt
E \left( \bigcap\{V(\X) \mid V \in Z(\!\mathscr{Y})\}\right) \cap K = E (\Kr(D))^{\mbox{\it\texttt{s}}_{\mathscr{Y}}} \cap K \\
= & \hskip -7pt (E \Kr(D))^{\mbox{\it\texttt{s}}_{\mathscr{Y}}} \cap K .
\end{array}
$$ 

  (2) Since quasi-compact sets correspond biunivocally to finite type semistar ope\-rations, in both spectral and the valutative case, then $\boldsymbol{\Theta}$ restricts to a continuous bijection $\boldsymbol{\Theta}_{\!{_{{f}}}}: \SStarspf(\Kr(D))\rightarrow \SStareabf(D)$ (see also \cite[Proposition 3.1(2)]{FiSp}). 

Let $J$ be a nonzero finitely generated ideal of $\Kr(D)$, thus it is principal (since $\Kr(D)$ is a B\'ezout domain). Therefore $J = z\Kr(D)$, for some nonzero element
 $ z:= \alpha/\beta \in K(\X)$, where $\alpha, \beta \in D[\X]$ and $\alpha, \beta$ are nonzero.
 We can consider the   basic open set  $\texttt{V}_{\alpha/\beta}^{\sharp} := \{\star \in \SStar(\Kr(D)) \mid 1 \in ((\alpha/\beta)\Kr(D))^\star\}$ of $\SStar(\Kr(D))$  (the superscript $\sharp$ is used here to emphasize the fact that we are considering subspaces of $\SStar(\Kr(D))$ and not of $\SStar(D)$).
 $$
\mbox{ {\bf Claim.}} \;\;\;\;\; \boldsymbol{\Theta}_{\mbox{\it \tiny\texttt{f}}} \left(\texttt{V}_{\alpha/\beta}^{\sharp}\cap\SStarspf(\Kr(D))\right)=
\left(\bigcap_{i=1}^n \texttt{V}_{b_i^{-1}\cont(\alpha)}\right)\cap \SStareabf(D) ,
$$
(where $\beta :=b_0+b_1\X\cdots+b_n\X^n$ and $\cont(\alpha)$ is the ideal of $D$ generated by the coefficients of the polynomial $\alpha$). 

Indeed, let $   \widetilde{\texttt{U}}_{\alpha/\beta}^{\sharp}:= \texttt{V}_{\alpha/\beta}^{\sharp}\cap\SStarspf(\Kr(D))$, suppose that $\star\in   {\widetilde{\texttt{U}}_{\alpha/\beta}^{\sharp}}$ and let $
\ast :=\boldsymbol{\Theta}_{\mbox{\it \tiny\texttt{f}}}(\star)$. 
Then, $1\in(\alpha/\beta \cdot \Kr(D))^\star$, i.e., 
$(\beta\Kr(D))^\star\subseteq(\alpha\Kr(D))^\star$. However, $\beta\Kr(D)=\cont(\beta)\Kr(D)$ (Lemma \ref{kr-star}(1)), 
and analogously for $\alpha$; thus,
\begin{equation*}
\cont(\beta)\subseteq(\cont(\beta)\Kr(D))^\star\cap K\subseteq(\cont(\alpha)\Kr(D))^\star\cap K=\cont(\alpha)^\ast.
\end{equation*}
Hence, $b_i\in\cont(\alpha)^\ast$ for each $i$, and $1\in(b_i^{-1}\cont(\alpha))^\ast$, that is, $\ast\in \texttt{V}_{b_i^{-1}\cont(\alpha)}$ for every $i$. On the other hand, it is a straightforward consequence of the definition that $\ast \in \SStareabf(D)$.

Conversely, let $\ast \in (\bigcap_{i=1}^n \texttt{V}_{b_i^{-1}\cont(\alpha)} )\cap \SStareabf(D)$. 
Since $\boldsymbol{\Theta}_{\mbox{\it \tiny\texttt{f}}} $ is bijective, then $\ast=\boldsymbol{\Theta}_{\mbox{\it \tiny\texttt{f}}}(\star) $ for a unique $\star \in \SStarspf(\Kr(D))$.
 Then, $b_i\in\cont(\alpha)^\ast$ for every $i$, 
 and $\cont(\beta)\subseteq\cont(\alpha)^\ast$; 
 it follows that $\beta/\alpha\in\Kr(D,\ast) $, i.e., $1 \in \alpha/\beta \cdot\Kr(D, \ast)$. On the other hand, $\ast=\boldsymbol{\Theta}_{\mbox{\it \tiny\texttt{f}}}(\star)$   implies 
 $E^\ast= (E\Kr(D))^\star\cap K $ and, moreover, since $\ast$ is \texttt{eab}, $E^\ast = E\Kr(D,\ast) \cap K$, for each $E \in \FF(D)$ (Lemma \ref{kr-star}(2)). 
 Therefore, $1\in \alpha/\beta \cdot \Kr(D,\ast)$ implies that $1\in (\alpha/\beta \cdot\Kr(D))^\star$, i.e., 
 $\star\in {\widetilde{\texttt{U}}_{\alpha/\beta}^{\sharp}}$, so that $\ast\in \boldsymbol{\Theta}_{\mbox{\it \tiny\texttt{f}}}({\widetilde{\texttt{U}}_{\alpha/\beta}^{\sharp}})$.

The claim ensures that $\boldsymbol{\Theta}_{\mbox{\it \tiny\texttt{f}}}$ is open, and hence we conclude that it is a homeomorphism. 

(3) is an easy consequence of (2) and Theorem \ref{stable spectral}.
\end{proof}

 \begin{oss}
As we did in Remark \ref{oss:spectral-constr}, we can ask if $\SStareabf(D)$ is closed when $\SStarf(D)$ is endowed with the constructible topology.
 The answer is positive; indeed, consider the map $\boldsymbol{\Lambda}:\SStarstabft(\Kr(D))\rightarrow\SStarf(D)$ obtained by composing $\boldsymbol{\Theta}_f$ with the inclusion of $\SStareabf(D)$ into $\SStarf(D)$. Let $I \in \f(D)$ and let  $\mbox{\rm\texttt{V}}_I$ be a subbasic open set of $\SStar(D)$ then, by the proof of Theorem \ref{eab-op}, there exist  $\alpha, \beta \in D[\X]$, with $\alpha, \beta$  nonzero, such that 
\begin{equation*}
\boldsymbol{\Lambda}^{-1}(\mbox{\rm\texttt{V}}_I \cap \SStarf(D))=\mbox{\rm\texttt{V}}_{\alpha/\beta}^{\sharp}\cap\SStarstabft(\Kr(D))=  \widetilde{\mbox{\rm\texttt{{U}}}}^{\sharp}_{((\alpha/\beta)\Kr(D))\cap\Kr(D)}.
\end{equation*}
Since $(({\alpha}/{\beta})\Kr(D))\cap\Kr(D)$ is finitely generated ($\Kr(D)$ being a Pr\"ufer domain \cite[(25.4), part (1)]{gi}), $\boldsymbol{\Lambda}^{-1}(\mbox{\rm\texttt{V}}_I \cap \SStarf(D))$ is quasi-compact   (Proposition \ref{stableft}(1)). This means that $\boldsymbol{\Lambda}$ is a spectral map. In particular, it is continuous when $\SStarstabft(\Kr(D))$ and $\SStarf(D)$ are endowed with the constructible topology. Since a spectral space, endowed with the constructible topology, is both compact and Haussdorff, $\boldsymbol{\Lambda}$ is a closed map, when both spaces are endowed with the constructible topology. In particular, $\boldsymbol{\Lambda}(\SStarstabft(\Kr(D)))=\SStareabf(D)$ is closed subspace of $\SStarf(D)$, endowed with the constructible topology.
\end{oss}
\smallskip

\noindent  
{\bf Acknowledgment.} We sincerely thank the referee for the careful reading of the manuscript  and for providing constructive comments and help in improving the presentation of the paper.


\medskip


\end{document}